\documentclass{article}
\usepackage[utf8]{inputenc}
\usepackage{geometry}
\geometry{
 a4paper,
 total={170mm,257mm},
 left=20mm,
 top=20mm,
 }
\pdfoutput=1
\usepackage{amsmath}
\setcounter{MaxMatrixCols}{15}
\usepackage{amssymb}
\usepackage{amsfonts}
\usepackage{caption}
\usepackage{subcaption}
\usepackage{float}
\usepackage{parskip}
\usepackage{amsthm}
\usepackage{thmtools}
\usepackage{csquotes}
\usepackage{mathdots}
\usepackage[dvipsnames]{xcolor}
\usepackage[shortlabels]{enumitem}
\usepackage{authblk}
\usepackage{graphicx}
\usepackage{glossaries}
\graphicspath{ {./documents/} }
\usepackage{tikz}
\usetikzlibrary{calc}
\usetikzlibrary{decorations.markings}
\usepackage{hyperref}
\hypersetup{
    colorlinks=true,
    linkcolor=BrickRed,
    filecolor=magenta,      
    urlcolor=cyan,
    pdftitle={Overleaf Example},
    pdfpagemode=FullScreen,
    citecolor=JungleGreen,
    }

\newtheorem{theorem}{Theorem}[section]
\newtheorem{proposition}[theorem]{Proposition}

\newtheorem{lemma}[theorem]{Lemma}


\newenvironment{customthm}[1]
  {\innercustomthm}
  {\endinnercustomthm}

\theoremstyle{definition}

\newcommand{\seqj}[3]{(#1_j)_{j=#2}^{#3}}

\newcommand{\orbs}[2]{\Omega_{#1}(#2)}

\newcommand{\Jfree}{J_{\mathrm{free}}}
\newcommand{\Jfixedone}{J_{\mathrm{fixed},1}}
\newcommand{\Jfixedzero}{J_{\mathrm{fixed},0}}

\newcommand{\bqe}[1]{f^*_{#1}}

\newcommand{\hdim}{\dim_{\mathrm{H}}}


\title{On finitely many base $q$ expansions}
\author[1]{Simon Baker}
\author[2]{George Bender}
\affil[1]{Department of Mathematical Sciences,
Loughborough University,
Loughborough,
LE11 3TU,
UK, 
simonbaker412@gmail.com}
\affil[2]{School of Mathematics,
University of Birmingham,
Edgbaston,
Birmingham,
B15 2TT,
UK,
georgew.bender@gmail.com}

\begin{document}
\setlength{\parskip}{2.3ex}

\maketitle

\begin{abstract}
Given some integer $m \geq 3$, we find the first explicit collection of countably many intervals in $(1,2)$ such that for any $q$ in one of these intervals, the set of points with exactly $m$ base $q$ expansions is nonempty and moreover has positive Hausdorff dimension.
Our method relies on an application of a theorem proved by Falconer and Yavicoli \cite[Theorem 6]{FalYav2022}, which guarantees that the intersection of a family of compact subsets of $\mathbb{R}^d$ has positive Hausdorff dimension under certain conditions.
\end{abstract}

\section{Introduction}

Let $q \in (1,2)$ and let $I_q = [0 , \frac{1}{q-1}]$.
We say that a sequence $\seqj{\epsilon}{1}{\infty} \in \{0,1\}^\mathbb{N}$ is a base $q$ expansion of $x \in I_q$ if
$$x = \sum_{j=1}^\infty \epsilon_j q^{-j}.$$
It is a straightforward exercise to show that $x$ has a base $q$ expansions if and only if $x \in I_q$.
In order to put our work in context, we observe where the research in base $q$ expansions started, how it developed, and what current open questions are receiving attention.
Initially, base $q$ expansions were studied via the dynamical system 
$T(x) = qx \pmod{1}$ on $[0,1]$.
Each point $x \in [0,1]$ has a unique orbit in this system which corresponds to what is known as the \textit{greedy expansion} of $x$.
In an early paper by R\'enyi \cite{renyi1957representations} it is shown that there exists a $T$-invariant probability measure, equivalent to Lebesgue, which is also ergodic.
Following this, Parry \cite{parry1960beta} gave sufficient conditions on a sequence $\seqj{\epsilon}{1}{\infty}$ in order for it to appear as a greedy base $q$ expansion.
We note that these results were actually proved in the more general setting where $q$ is some real number greater than $1$, where the sequences $\seqj{\epsilon}{1}{\infty}$ are elements of $\{0 , 1 , \ldots , \lfloor q \rfloor \}^\mathbb{N}$ instead of $\{0,1\}^\mathbb{N}$.
In the early 90s, more attention was given to the broader system in which all possible base $q$ expansions of $x \in I_q$ are considered.
This is the system in which we are interested in this paper.

We define the set of base $q$ expansions of $x \in I_q$ by
$$\Sigma_q(x) = \left\{ \seqj{\epsilon}{1}{\infty} \in \{0,1\}^\mathbb{N} : x = \sum_{j=1}^\infty \epsilon_j q^{-j} \right\},$$
and the projection map $\pi_q: \{0,1\}^\mathbb{N} \rightarrow I_q$ by
$$\pi_q(\seqj{\epsilon}{1}{\infty}) = \sum_{j=1}^\infty \epsilon_j q^{-j}.$$

The foundational results in base $q$ expansions demonstrate that they behave very differently to the well-understood integer base expansions.
Recall the familiar example of binary expansions, where every point $x \in [0,1]$ has a unique expansion apart from a countable set of points which have exactly two expansions, by virtue of the fact that $\pi_2(10^\infty) = \pi_2(01^\infty)$.
The same is true for all integer base expansions.
By comparison, Sidorov proved that for every $q \in (1,2)$, Lebesgue almost every $x \in I_q$ will have uncountably many base $q$ expansions \cite{Sidorov2003}.
Moreover, given any $m \in \mathbb{N} \cup \aleph_0$, we can choose $q \in (1,2)$ and $x \in I_q$ such that $x$ has exactly $m$ base $q$ expansions \cite{Bak2015OnSmall, sidorov2009expansions, BakSid2014}.
Define 
$$\mathcal{U}_q = \{ x \in I_q : |\Sigma_q(x)| = 1 \},$$
and for $m \geq 2$
$$\mathcal{U}_q^{(m)} = \{x \in I_q : |\Sigma_q(x)| = m \}.$$
In this paper, our results concern the sets $\mathcal{B}_m$, where
$$\mathcal{B}_m = \{ q \in (1,2) : \mathcal{U}_q^{(m)} \neq \emptyset \}.$$
For all $k \geq 2$, we define the $k$-Bonacci number $q_k$ to be the unique number in $(1,2)$ which satisfies
$$q_k^k - q_k^{k-1} - \cdots - q_k - 1 = 0.$$
For $k = 2$, we denote $q_2 = \frac{1 + \sqrt{5}}{2}$ by $G$, the Golden Ratio.
This family of numbers will play an important role in our results and in the discussion below.

Particular attention was given to the number of expansions of $x = 1$ for different values of $q$.
In particular, it was shown by Erd\"os et al. \cite[Theorem 1]{ErJoHo1991} that the set of $q$ for which $1$ has a unique base $q$ expansion has uncountably many elements, while Dar\'oczy and K\'atai \cite[Theorem 3]{DarKat1995Structure}
showed that it has zero Lebesgue measure, and Hausdorff dimension $1$.
Moreover, in \cite{ErdJoo1993} Erd\"os and Joo showed that there are uncountably many bases $q$ for which $1 \in \mathcal{U}_q^{\aleph_0}$.
Komornik and Loreti \cite{KomLor98} found an algebraic construction for the value $q_{\mathrm{KL}} \approx 1.78723$, now known as the Komornik-Loreti constant, which is the smallest base for which $1$ has a unique expansion.
Later, it was found that $q_{\mathrm{KL}}$ plays an important role in our understanding of $\mathcal{U}_q$ \cite{GlenSid2001}.

Generalising the investigation of the number of expansions of $x=1$ for different bases $q$, we can ask for which values of $q$ there exists at least one $x \in I_q$ that has a unique expansion in base $q$.
It was shown by Erd\"os et al \cite{erdos1990characterization} that if $q \in (1,G)$ then all\footnote{It is obvious that the endpoints of $I_q$, $0 = \pi_q(0^\infty)$ and $\frac{1}{q-1} = \pi_q(1^\infty)$, have unique expansions.} $x \in (0,\frac{1}{q-1})$ have uncountably many base $q$ expansions.
Sidorov and Vershik \cite{SidVer1998Goldenshift} showed that if $q = G$ then all $x \in (0, \frac{1}{q-1})$ have uncountably many expansions unless $x = nG \pmod{1}$ for some $n \in \mathbb{Z}$, in which case $x \in \mathcal{U}_G^{\aleph_0}$.
In \cite{GlenSid2001} Glendinning and Sidorov proved the following dichotomy.
If $q \in (G , q_{\mathrm{KL}})$ then $\mathcal{U}_q$ is countably infinite and every unique expansion is eventually periodic, but if $q \in (q_{\mathrm{KL}}, 2)$ then $\mathcal{U}_q$ contains a Cantor set with positive Hausdorff dimension.
For $q= q_{\mathrm{KL}}$, it is shown that $\mathcal{U}_q$ is uncountably infinite and has zero Hausdorff dimension.
We will see in more detail later (Lemma \ref{lemma: containment in U_q}) that if $q \in (G,2)$ then 
$\dim_{\mathrm{H}}(\mathcal{U}_q) \nearrow 1 $ as $q \nearrow 2$ \cite{GlenSid2001}.
Another avenue of research concerns the function $q \mapsto \hdim(\mathcal{U}_q)$ which is shown by Komornik et al in \cite{KomKonLi2017Hausdorff} to be continuous, have bounded variation, and resemble the Cantor function. 
Further work was carried out on the properties of this function by the first author et al in \cite{BarBakKon2019Entropy}.
Together these results provide us with a reasonably complete understanding of $\mathcal{U}_q$ for all $q \in (1,2)$.
Following on from the study of $\mathcal{U}_q$ it is natural to ask, given $m \in \mathbb{N}$, for which bases $q$ is the set $\mathcal{U}_q^{(m)}$ nonempty and what is its Hausdorff dimension?
In \cite{sidorov2009expansions} Sidorov shows that the smallest element of $\mathcal{B}_2$ is $q_m \approx 1.71064$, which is the root in $(1,2)$ of
$$x^4 = 2x^2 + x + 1.$$
The next smallest element of $\mathcal{B}_2$ is $q_f \approx 1.75488$ which is the root in $(1,2)$ of
$$x^3 = 2x^2 - x + 1,$$
and it is shown by the first author in \cite{BakSid2014} that $q_f$ is also the smallest element of $\mathcal{B}_m$ for all $m \geq 3$.
Sidorov \cite{sidorov2009expansions} proved the following key results concerning $\mathcal{B}_2$ and it is these results which we build on directly in this paper.
After proving
\begin{equation}
    \label{equation: q in B_2 condition}
    q \in \mathcal{B}_2 \iff 1 \in \mathcal{U}_q - \mathcal{U}_q,
\end{equation}
it is a straightforward application of Newhouse's Theorem \cite{Newhouse1970} (see Subsection \ref{subsection: thickness and interleaving}) to prove that $[q_3, 2) \subset \mathcal{B}_2$, where $q_3 \approx 1.83929$.
Sidorov generalises this theorem, claiming that there exists some $\gamma_m > 0$ such that $[2- \gamma_m , 2) \subset \mathcal{B}_m$ for all $m \geq 3$.
Unfortunately, the authors believe this proof contains a mistake which cannot be fixed.
If this is true, and this result is invalid, then the question of whether the Lebesgue measure of $\mathcal{B}_m$ is positive is still open for $m \geq 3$.
The main theorem of this paper (Theorem \ref{theorem: main theorem}) is a stronger version of Sidorov's general result in the sense that we are able to find the first explicit intervals contained in $\mathcal{B}_m$ for $m \geq 3$.
More recently it was proved by the first author and Zou \cite{BakerYuru2023Metric} that for Lebesgue almost every $q \in (q_\mathrm{KL}, M+1]$ we have
$\hdim \mathcal{U}_q^{(m)} \leq \max \{ 0 , 2 \hdim \mathcal{U}_q -1 \}$
for all $m \in \{2,3, \ldots \}$.
This result improves upon the bound of $\hdim \mathcal{U}_q^{(m)} \leq \hdim \mathcal{U}_q$ for all $m \in \{2,3, \ldots \}$ which can be seen by a simple branching argument (see \cite{sidorov2009expansions} or \cite{Bak2015OnSmall}).
The equivalence \eqref{equation: q in B_2 condition} illustrates how information about $\mathcal{U}_q$ can be used to study $\mathcal{B}_2$.
More generally, we will see in Subsection \ref{subsection: a sufficient condition for q in B_m} that we can guarantee that $q \in \mathcal{B}_m$ if a certain intersection of affine images of the set $\mathcal{U}_q$ is nonempty.
Moreover, via an application of a theorem of Falconer and Yavicoli \cite[Theorem 6]{FalYav2022}, we are able to bound the Hausdorff dimension of the corresponding $\mathcal{U}_q^{(m)}$ set from below.
The version of this theorem that we use appears as Theorem \ref{theorem: FandY} in Subsection \ref{subsection: thickness and interleaving}.


Our most general result is Theorem \ref{theorem: main theorem} which locates an explicit collection of countably many intervals in $\mathcal{B}_m$ for all $m \geq 3$.
More precisely, for any $m \geq 3$, Theorem \ref{theorem: main theorem} tells us that there is a neighbourhood of $q_k$ contained in $\mathcal{B}_{m}$ whenever $k$ is sufficiently large.
In the case of $\mathcal{B}_3$ we are able to obtain a stronger result which is the content of Theorem \ref{theorem: B_3 theorem}.

\begin{customthm}{A}
    \label{theorem: main theorem}
    Let $m \in \mathbb{N}$ and let $k \geq K_m$ where 
    $$K_m = \left\lceil \frac{20}{19}\left( \log_{1.999}(m+2) + 24 \right) + 4 \right\rceil.$$
    If $|q - q_k| < q_k^{-(m+2)k-3}$, then $q \in \mathcal{B}_{m+2}$ and 
    $\hdim(\mathcal{U}_q^{(m+2)}) \geq 1 - 1024(m+2)^{\frac{20}{19}}q^{4-k} > 0$.
\end{customthm}

\begin{table}[h]
\label{table: intervals for main theorem}
\begin{center}
\renewcommand{\arraystretch}{1.2}
\begin{tabular}{|c|c|l|l|l|}
    \hline
    $m$ & $K_m$ & $q_{K_m}$ (approx.) & Interval in $q$-space contained in $\mathcal{B}_{m+2}$ & $\displaystyle \hdim(\mathcal{U}_{q_{K_m}}^{(m+2)})$ \\
    \hline
    1 & 31 & $1.999999999534342$ & $[q_{31} - 1.26218 \times 10^{-29} , q_{31} + 1.26218 \times 10^{-29}]$ &  $\geq 
    0.999967173 \ldots$ \\
    2 & 32 & $1.999999999767168$ & $[q_{32} - 3.67342 \times 10^{-40} , q_{32} + 3.67342 \times 10^{-40}] $ & $\geq 
    0.999983586 \ldots$ \\
    3 & 32 & $1.999999999767168$ & $[q_{32} - 8.55285 \times 10^{-50}  , q_{32} + 8.55285 \times 10^{-50}]$ & $\geq 
    0.999979240 \ldots $ \\
    4 & 32 & $1.999999999767168$ & $[q_{32} - 1.99136 \times 10^{-59} , q_{32} + 1.99136 \times 10^{-59}]$ & $\geq 
    0.999974848 \ldots$ \\
    5 & 33 & $1.999999999883594$ & $[q_{33} - 3.62227 \times 10^{-71}, q_{33} + 3.62227 \times 10^{-71}]$ & $\geq 
    0.999985209 \ldots$ \\
    \hline
\end{tabular}
\caption{For small $m$, the table shows the leftmost interval contained in $\mathcal{B}_{m+2}$ and the lower bound on $\hdim(\mathcal{U}_{q_{K_m}}^{(m+2)})$, provided by Theorem \ref{theorem: main theorem}.
We emphasise that these are the leftmost intervals guaranteed to exist by Theorem \ref{theorem: main theorem} there may be other intervals in $\mathcal{B}_{m+2}$ contained to the left of these.}
\end{center}
\end{table}


\begin{customthm}{B}
    \label{theorem: B_3 theorem}
    \begin{enumerate}[(a)]
        \item
        \label{theorem item: B_3 theorem at least 10}
        For $k \geq 10$, if $|q - q_k| \leq q_k^{-2k-6}$ then $q \in \mathcal{B}_3$.
        \item 
        \label{theorem item: B_3 theorem 9}
        If $0 < q - q_9 \leq q_9^{-24}$ then $q \in \mathcal{B}_3$.
    \end{enumerate}
\end{customthm}

\begin{table}[h]
\label{table: intervals for B_3 theorem}
\begin{center}
\renewcommand{\arraystretch}{1.2}
\begin{tabular}{|c|l|l|l|}
    \hline
    $k$ & $q_k$ (approx.) & Interval in $q$-space contained in $\mathcal{B}_3$  \\
    \hline
    9 & $1.99802947026229$ & $[q_9, q_9 + 6.10316 \times 10^{-8} ]$ \\
    10 & $1.99901863271010$ & $[q_{10} - 1.50925 \times 10^{-8} , q_{10} + 1.50925 \times 10^{-8} ]$ \\
    11 & $1.99951040197829$ & $[q_{11} - 3.75092 \times 10^{-9} , q_{11} + 3.75092 \times 10^{-9}]$ \\
    12 & $1.99975550093732$ & $[q_{12} - 9.34745 \times 10^{-10} , q_{12} + 9.34745 \times 10^{-10}] $ \\
    13 & $1.99987783271155$ & $[q_{13} - 2.33286 \times 10^{-10} , q_{13} + 2.33286 \times 10^{-10}]$ \\
    \hline
\end{tabular}
\end{center}
\caption{Neighbourhoods of $q_k$ contained in $\mathcal{B}_3$ for small values of $k$, provided by Theorem \ref{theorem: B_3 theorem}.}
\end{table}

Together Theorems \ref{theorem: main theorem} and \ref{theorem: B_3 theorem} provide the first explicit intervals contained in $\mathcal{B}_m$ for $m \geq 3$.
Since $K_1 = 31$, Theorem \ref{theorem: B_3 theorem} proves the existence of intervals in $\mathcal{B}_3$ where Theorem \ref{theorem: main theorem} does not apply, namely, when $m = 1$ and $9 \leq k < K_1$.
The reason for this discrepancy in the values of $k$ where the theorems apply is due to the number of sets in the intersection which we require to be nonempty.
To be precise, the argument for the proof of Theorem \ref{theorem: B_3 theorem} boils down to proving that the intersection of two compact subsets of $\mathbb{R}$ is nonempty, this allows us to apply Theorem \ref{theorem: Newhouse} \cite{Newhouse1970}.
However, to prove Theorem \ref{theorem: main theorem} for some $m \in \mathbb{N}$, we require a collection of $(m+2)$ sets to be nonempty, hence we cannot directly apply Theorem \ref{theorem: Newhouse} \cite{Newhouse1970}.
In Section \ref{section: preliminaries} we outline the main theorems and propositions used for Theorems \ref{theorem: main theorem} and \ref{theorem: B_3 theorem}, whose proofs are contained in Sections \ref{section: proof of main theorem} and \ref{section: proof of B_3 theorem} respectively.

\section{Preliminaries}
\label{section: preliminaries}

In this section we introduce the results required for the proofs of Theorems \ref{theorem: main theorem} and \ref{theorem: B_3 theorem}.
We draw special attention to Theorem \ref{theorem: FandY} - a special case of a theorem of Falconer and Yavicoli \cite[Theorem 6]{FalYav2022} - on which Theorem \ref{theorem: main theorem} relies.

\subsection{Thickness and interleaving}
\label{subsection: thickness and interleaving}

Let $|X|$ and $\mathrm{conv}(X)$ denote the diameter and convex hull respectively of a set $X \subset \mathbb{R}$.
Given some compact set $C \subset \mathbb{R}$, Newhouse \cite{Newhouse1970} defines its \textit{thickness} $\tau(C)$ via its \textit{gaps} and \textit{bridges}.
A \textit{gap} $G$ of $C$ is a maximal connected component\footnote{Including both the unbounded components to the left and right of $\mathrm{conv}(C)$.} of $\mathbb{R} \setminus C$, and since $C$ is compact, all such gaps are open intervals.
Any compact set $C \subset \mathbb{R}$ admits a stepwise construction removing gaps in order of decreasing diameter.
Precisely, we start by removing the unbounded gaps from $\mathbb{R}$ to leave $\mathrm{conv}(C)$.
From here we remove from $\mathrm{conv}(C)$ the next largest gap $G_1$ (if there is a tie, take any).
This leaves two closed intervals, called \textit{bridges}, $B_1^L$ and $B_1^R$ with the gap $G_1$ in between.
We continue in this way, removing the next largest gap from the remaining union of bridges.
In general, at step $n$, gap $G_n$ with $|G_{n-1}| \geq |G_n| \geq |G_{n+1}|$, is removed from a closed interval, leaving bridges $B_n^L$ and $B_n^R$ either side of $G_n$.
We define the thickness of $C$ by
$$\tau(C) = \inf_n \left\{ \min \left( \frac{|B_n^L|}{|G_n|} , \frac{|B_n^R|}{|G_n|} \right) \right\}. $$
Note that the initial step removing the unbounded gaps of $C$ is not involved in the thickness calculation.
Moreover, it can be checked that the thickness is independent of the choice of gap when two or more gaps have the same diameter
\cite{FalYav2022}.
If $B$ is a bridge of $C$ then all gaps contained within $B$ have diameter no larger than the gaps on either side of $B$.
Note also that any bridge $B$ of $C$ has the property that all gaps of $C$ are either contained in $B$ or contained in the complement of $B$.

We will be interested in the thickness of certain compact subsets of $\mathcal{U}_q$ for $q \in (1,2)$.
To study these effectively we will need to define some notation.
Let $\{0,1\}^*$ be the set of all finite words in $\{0,1\}$ and let $|\delta|$ denote the length of a finite word $\delta \in \{0,1\}^*$.
A sequence $\seqj{\epsilon}{1}{\infty} \in \{0,1\}^\mathbb{N}$ is said to \textit{avoid} a word (finite or infinite) $\seqj{\delta}{1}{m} \in \{0,1\}^m$ if there is no index $i \in \mathbb{N}$ such that $\seqj{\epsilon}{i}{i+m-1} = \seqj{\delta}{1}{m}$.
For any $k \in \mathbb{N}$, define the set $S_k \subset \{0,1\}^\mathbb{N}$ by
$$S_k = \{ \seqj{\epsilon}{1}{\infty} \in \{0,1\}^\mathbb{N} : \seqj{\epsilon}{1}{\infty} \ \mathrm{avoids} \ (01^k) \ \mathrm{and} \ (10^k) \}.$$
The following key lemma is a consequence of the work in \cite[Lemma 4]{GlenSid2001}.
\begin{lemma}
    \label{lemma: containment in U_q}
    Let $k \geq 2$.
    \begin{enumerate}[(a)]
        \item 
        \label{lemma item: U_q contains S_k}
        If $q \in (q_k , 2)$ then $\pi_q(S_k) \subset \mathcal{U}_q$.
         \item 
        \label{lemma item: containment in U_q Cantor set}
        If $q \in (q_k , 2)$ then $\pi_q(S_k)$ is a Cantor set and moreover every gap of $\pi_q(S_k)$ is an open interval of the form
        $$\left(\pi_q(\delta (01^{k-1})^\infty) , \pi_q(\delta (10^{k-1})^\infty)\right),$$
        where $\delta \in \{0,1\}^*$ avoids $(01^k)$ and $(10^k)$.
    \end{enumerate}
\end{lemma}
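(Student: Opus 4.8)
The plan is to reduce everything to combinatorics on the sequence space and then transfer it through $\pi_q$. For part \eqref{lemma item: U_q contains S_k}, recall the standard characterisation (going back to Erdős–Joó–Komornik and used in \cite{GlenSid2001}) that a sequence $\seqj{\epsilon}{1}{\infty}$ is the \emph{unique} expansion of its image $\pi_q\big(\seqj{\epsilon}{1}{\infty}\big)$ if and only if two "lexicographic" conditions hold: whenever $\epsilon_i = 0$, the tail $\seqj{\epsilon}{i+1}{\infty}$ is strictly smaller (lexicographically) than the quasi-greedy expansion of $1$ in base $q$, and the symmetric condition with $0$ and $1$ interchanged whenever $\epsilon_i = 1$. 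So the first step is to show that for $q \in (q_k,2)$ the quasi-greedy expansion of $1$ in base $q$ lexicographically dominates $(1^{k-1}0)^\infty$ — equivalently, that it begins with at least $k-1$ ones (this is exactly where the $k$-Bonacci threshold $q_k$ enters, since $q_k$ is characterised by the quasi-greedy expansion of $1$ being $(1^{k-1}0)^\infty$, or by $q_k$ being the root of $q^k = q^{k-1}+\cdots+1$). Then the second step: if $\seqj{\epsilon}{1}{\infty} \in S_k$, i.e. it avoids $01^k$ and $10^k$, then after any $0$ we see at most $k-1$ consecutive $1$'s before the next $0$, so the tail is lexicographically at most $(1^{k-1}0)^\infty < $ quasi-greedy expansion of $1$, and symmetrically after any $1$. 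This gives the uniqueness criterion and hence $\pi_q(S_k) \subset \mathcal{U}_q$. One has to be slightly careful at the boundary between weak and strict inequalities and about the role of the quasi-greedy versus greedy expansion of $1$, but this is the routine part.

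For part \eqref{lemma item: containment in U_q Cantor set}, the plan is to identify $S_k$ as a subshift of finite type and push its structure through $\pi_q$. First I would observe that $\pi_q|_{S_k}$ is injective: since $\pi_q(S_k) \subset \mathcal{U}_q$, distinct sequences in $S_k$ have distinct images (a point with a unique expansion cannot be hit twice). Next, $\pi_q$ is continuous and $S_k$ is compact in $\{0,1\}^{\mathbb N}$, so $\pi_q(S_k)$ is compact. To see it is a Cantor set I need it to be perfect and totally disconnected. Total disconnectedness / nowhere density follows from exhibiting the claimed gaps and showing they are dense in $\mathrm{conv}(\pi_q(S_k))$; perfectness follows because $S_k$ has no isolated points (any admissible finite word extends in at least two ways within $S_k$ — e.g. one can always append a long enough block of alternating-ish symbols avoiding $01^k,10^k$) and $\pi_q$ is injective and continuous. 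The heart of \eqref{lemma item: containment in U_q Cantor set} is the explicit description of the gaps. Here the plan is: take $\delta \in \{0,1\}^*$ avoiding $01^k$ and $10^k$, and consider the two sequences $\delta(01^{k-1})^\infty$ and $\delta(10^{k-1})^\infty$. Both lie in $S_k$ (one checks the avoidance conditions hold for the periodic tails and across the junction with $\delta$ — this is where one uses that $\delta$ ends in a way compatible with appending $0$ or $1$, and may require splitting into cases on the last symbol of $\delta$, or phrasing $\delta$ as ranging over an appropriate set). Since on finite prefixes of length $|\delta|$ these agree and then diverge with a $0$ versus a $1$, and since $01^{k-1}\cdots$ is the lexicographically largest admissible continuation starting with $0$ while $10^{k-1}\cdots$ is the lexicographically smallest admissible continuation starting with $1$, every element of $S_k$ whose image lies strictly between these two images is impossible — so $\big(\pi_q(\delta(01^{k-1})^\infty), \pi_q(\delta(10^{k-1})^\infty)\big)$ contains no point of $\pi_q(S_k)$ and its endpoints do lie in $\pi_q(S_k)$, i.e. it is a gap. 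Conversely, any gap arises this way: given a gap, look at the longest common prefix $\delta$ of the two sequences mapping to its endpoints; the symbol after $\delta$ must be $0$ on the left endpoint's sequence and $1$ on the right's, and then maximality of the gap forces the tails to be $(1^{k-1}0)^\infty$ and $(0^{k-1}1)^\infty$ respectively.

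The main obstacle I anticipate is the bookkeeping in the gap description, specifically verifying the avoidance conditions \emph{at the junction} between $\delta$ and the periodic tail, and making the "lexicographically extremal continuation" argument airtight in both directions. A clean way to handle this is to work entirely in sequence space: define the cylinder-like sets $[\delta]\cap S_k$, show each such nonempty piece has a lexicographically maximal and minimal element which are exactly $\delta(1^{k-1}0)^\infty$-type and $\delta(0^{k-1}1)^\infty$-type sequences (after normalising $\delta$), and note that $\pi_q$ is a strictly increasing bijection from $(S_k, \text{lex})$ to $(\pi_q(S_k), <)$ because expansions are unique. Monotonicity of $\pi_q$ on $S_k$ is the linchpin: it converts the combinatorial extremality into the geometric statement about gap endpoints, and it is precisely what uniqueness of expansions buys us. Once that is in place, both the "these intervals are gaps" and "every gap looks like this" directions are short, and the Cantor-set conclusion follows formally from: compact, injective continuous image of a subshift of finite type with no isolated points, whose complement in its convex hull is a dense union of the open intervals just described.
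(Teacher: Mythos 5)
The paper itself does not prove this lemma: it is stated as ``a consequence of the work in \cite[Lemma 4]{GlenSid2001}'' and is taken as given. So there is no in-paper proof to compare against; what you have written is a reconstruction of the argument from the cited reference, and it is essentially the right one. The lexicographic criterion for unique expansions (tails after a $0$ strictly below the quasi-greedy expansion $\alpha(q)$ of $1$, symmetrically after a $1$), combined with the facts that $\alpha(q_k)=(1^{k-1}0)^\infty$ and that $\alpha(q)$ is lexicographically increasing in $q$, is exactly what \cite{GlenSid2001} use. Your observation that $\pi_q$ restricted to $S_k$ is strictly lex-to-numerical order preserving is indeed the linchpin for part (b), and your derivation of it from part (a) via the greedy map (unique expansion $=$ greedy expansion, greedy map is order-preserving) is clean and correct; note that this is \emph{not} true on all of $\{0,1\}^{\mathbb N}$ for $q<2$, so it genuinely uses the uniqueness established in (a).

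Two small points worth tightening. First, the lemma only asserts one direction --- every gap of $\pi_q(S_k)$ has the stated form --- and does not claim that every such interval is a gap, so you are proving a bit more than asked. Second, and related, for a word $\delta$ avoiding $01^k$ and $10^k$ the sequences $\delta(01^{k-1})^\infty$ and $\delta(10^{k-1})^\infty$ need \emph{not} both lie in $S_k$: if $\delta$ ends in $10^{k-1}$ then $\delta(01^{k-1})^\infty$ contains $10^k$ across the junction, and if $\delta$ ends in $01^{k-1}$ then $\delta(10^{k-1})^\infty$ contains $01^k$. You flag this as a ``junction'' issue, which is right; the resolution for the direction the lemma actually requires is that given a gap one reads off its endpoints' (unique) expansions, takes $\delta$ to be their longest common prefix, and then the membership of both endpoints in $\pi_q(S_k)$ automatically forces the junction conditions, so that particular class of $\delta$ never arises. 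If you also want the converse (which the paper does use implicitly in Lemma~\ref{lemma: properties of gaps} and in the constructions of $G(q)$, $H(q)$), you should restrict $\delta$ to words not ending in $01^{k-1}$ or $10^{k-1}$.
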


It is also shown in \cite{GlenSid2001} that $\hdim(\pi_q(S_k)) \rightarrow 1$ as $q \rightarrow 2$, but more important for us is the following result which is a consequence of the estimates in the proof of \cite[Theorem 4.4]{sidorov2009expansions}.
\begin{lemma}
    \label{lemma: thickness of S_k}
    $\tau(\pi_q(S_k)) > q^{k-3}$ whenever $k \geq 4$ and $q > q_k$.
\end{lemma}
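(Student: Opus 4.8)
The plan is to estimate directly the thickness of $\pi_q(S_k)$ using the explicit description of its gaps provided by Lemma \ref{lemma: containment in U_q}\ref{lemma item: containment in U_q Cantor set}. By that lemma, every gap of $\pi_q(S_k)$ has the form $\left(\pi_q(\delta(01^{k-1})^\infty),\ \pi_q(\delta(10^{k-1})^\infty)\right)$ for some admissible word $\delta$, so its length is $q^{-|\delta|}$ times the length of the ``root'' gap $\left(\pi_q((01^{k-1})^\infty),\ \pi_q((10^{k-1})^\infty)\right)$; this self-similarity under the maps $x\mapsto q^{-1}x$ and $x\mapsto q^{-1}x + q^{-1}$ is what makes the thickness computable. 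The first step is therefore to compute the length of a gap attached to a word $\delta$: it equals $q^{-|\delta|}\bigl(\pi_q((10^{k-1})^\infty) - \pi_q((01^{k-1})^\infty)\bigr)$, and one evaluates the two geometric-type series to get a closed form in $q$ and $k$.

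The second, and more delicate, step is to identify the bridges adjacent to a given gap and bound their lengths from below. For the gap indexed by $\delta$, the bridge on each side is contained in a cylinder of $\pi_q(S_k)$ determined by a prefix; the worst case (smallest bridge-to-gap ratio) occurs when the neighbouring gap is as large as possible, which forces a comparison between the gap at level $|\delta|$ and the largest gaps sitting inside the two adjoining bridges. Because all gaps inside a bridge have diameter at most that of the flanking gaps (as noted in the discussion of bridges), the infimum in the definition of $\tau$ is attained by comparing a gap to the bridge separating it from the next-largest gap at the same or nearby scale. Carrying this out, one shows that for every $n$ the ratio $|B_n^{L}|/|G_n|$ and $|B_n^{R}|/|G_n|$ is bounded below by a quantity of the form (bridge length)/(gap length) that, after the geometric-series evaluation, simplifies to something comparable to $q^{k-3}$; the constant and the lower-order terms are controlled using $q > q_k$, i.e. $q^k > q^{k-1} + \cdots + q + 1$, equivalently $q^k(q-2) + 1 < 0$ rearranged suitably, together with $k \geq 4$ to absorb error terms.

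The main obstacle I expect is the bookkeeping in the second step: one must check that no configuration of gaps produces a bridge shorter than the claimed bound, which means understanding the combinatorics of which admissible words $\delta$ can sit next to each other and how the removal order (by decreasing diameter) interleaves gaps at different levels of the word tree. Concretely, a gap at word-length $\ell$ has length $\asymp q^{-\ell}$, so gaps are removed roughly in order of increasing $|\delta|$, but within a fixed length the lengths are all equal, so one must verify the thickness is insensitive to the tie-breaking (as the preliminary discussion asserts, citing \cite{FalYav2022}) and then reduce to a finite check at each scale. Since the paper says this is ``a consequence of the estimates in the proof of \cite[Theorem 4.4]{sidorov2009expansions}'', I would lean on those estimates for the gap-length and bridge-length formulas and then just assemble them: plug the closed forms into the ratio, use $q > q_k$ to replace the $k$-Bonacci relation-governed numerator, and conclude $\tau(\pi_q(S_k)) > q^{k-3}$ once $k \geq 4$ makes all the discarded positive error terms harmless.
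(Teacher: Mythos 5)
The paper does not actually prove this lemma; it simply asserts it as ``a consequence of the estimates in the proof of \cite[Theorem 4.4]{sidorov2009expansions}.'' Your proposal ultimately lands in the same place --- you too defer the gap- and bridge-length estimates to Sidorov's Theorem~4.4 --- so in that sense the approach is the same. Your preliminary sketch of what those estimates must do (gaps scale as $q^{-|\delta|}$ times the root gap by Lemma~\ref{lemma: containment in U_q}\ref{lemma item: containment in U_q Cantor set}; bridges are bounded below using the ordering of gaps by $|\delta|$ from Lemma~\ref{lemma: properties of gaps}; reduce the infimum to a per-scale comparison) is structurally consistent with how such a thickness estimate goes, and you correctly identify the combinatorial bookkeeping as the real work.

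Two cautions. First, the claim that $\pi_q(S_k)$ is self-similar ``under the maps $x\mapsto q^{-1}x$ and $x\mapsto q^{-1}x+q^{-1}$'' is not right: for $q\in(1,2)$ the attractor of that two-map IFS is the whole interval $I_q$ (with overlaps), whereas $\pi_q(S_k)$ is the projection of a proper subshift of finite type. The correct, and weaker, statement you actually use is the affine scaling identity $\pi_q(\delta\omega)-\pi_q(\delta\omega')=q^{-|\delta|}\bigl(\pi_q(\omega)-\pi_q(\omega')\bigr)$, which gives the gap-length formula without any IFS self-similarity; the shape of the bridges, by contrast, genuinely depends on which admissible words can flank $\delta$ (for instance $\delta$ ending in $01^{k-1}$ cannot index a gap, since $\delta(10^{k-1})^\infty$ would contain $01^k$), so there is no single ``root'' configuration that all bridges rescale. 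Second, the proposal never exhibits the worst-case gap realizing (or nearly realizing) the infimum, so the specific exponent $k-3$ is not derived --- you only indicate that $q>q_k$ and $k\ge 4$ should make the error terms harmless. That is fine as a plan that outsources the hard estimate to Sidorov, which is also all the paper does, but as a standalone proof it is incomplete.
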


We define interleaving along with a stronger version we call $\epsilon$-\textit{strong interleaving}.
Two compact sets $C_1, C_2 \subset \mathbb{R}$ are \textit{interleaved} if neither set is contained in a gap of the other.
The Hausdorff metric, defined on compact subsets of $\mathbb{R}$ is given by
$$d_{\mathrm{H}}(X,Y) = \max\left\{ \sup_{x \in X} d(x , Y) , \sup_{y \in Y} d(y , X) \right\},$$
where $d$ is the normal Euclidean distance on $\mathbb{R}$.
Let $\epsilon > 0$ and let $A$ and $B$ be compact subsets of $\mathbb{R}$.
We say that the sets $A$ and $B$ are $\epsilon$-\textit{strongly interleaved} if the sets $A'$ and $B'$ are interleaved whenever $A'$ and $B'$ are compact subsets of $\mathbb{R}$ with the property that $d_{\mathrm{H}}(A , A') \leq \epsilon$ and $d_{\mathrm{H}}(B , B') \leq \epsilon$.
Observe that if $A$ and $B$ are $\epsilon$-strongly interleaved for some $\epsilon > 0$ then $A$ and $B$ are $\epsilon'$-strongly interleaved for every $0 < \epsilon' < \epsilon$.
It is clear that for any $\epsilon > 0$, if $A$ and $B$ are $\epsilon$-strongly interleaved then they are interleaved.
The following lemma allows us to declare that two compact sets in $\mathbb{R}$ are $\epsilon$-strongly interleaved if they contain points which are sufficiently well-separated.
This will be useful for the proof of Theorem \ref{theorem: B_3 theorem}.

\begin{lemma}
    \label{lemma: strong interleaving}
    Suppose $A$ and $B$ are compact subsets of $\mathbb{R}$ and there exists $a_1 , a_2 \in A$ and $b_1 , b_2 \in B$ with the property that
    $$a_1 \leq b_1 \leq a_2 \leq b_2,$$
    then $A$ and $B$ are interleaved.
    Moreover, if
    $$\min \{ (b_1 - a_1) , (a_2 - b_1) , (b_2 - a_2) \} \geq 2\epsilon,$$
    for some $\epsilon > 0$ then $A$ and $B$ are $\epsilon$-strongly interleaved.
\end{lemma}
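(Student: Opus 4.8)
The plan is to prove both claims by a direct geometric argument, treating the interleaving statement first and then upgrading to $\epsilon$-strong interleaving by a perturbation estimate.

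\textbf{Step 1: Interleaving.} Recall that $A$ and $B$ are interleaved precisely when neither set lies inside a single gap of the other. Suppose for contradiction that $A$ is contained in some gap $G_B = (u,v)$ of $B$. Since $b_1, b_2 \in B$ and $a_1 \leq b_1 \leq a_2 \leq b_2$, the point $a_2 \in A \subset (u,v)$ must satisfy $u < a_2 < v$. But $b_1 \leq a_2$, so $b_1 \leq a_2 < v$; since $b_1 \in B$ and $G_B$ is a gap of $B$ (hence disjoint from $B$), we cannot have $b_1 \in (u,v)$, so $b_1 \leq u < a_2$. Similarly $a_2 \leq b_2$ forces $b_2 \geq v > a_2$, and $b_2 \notin (u,v)$ gives $b_2 \geq v$. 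So $b_1 \leq u < v \leq b_2$, meaning the gap $(u,v)$ of $B$ lies between two points of $B$ with no point of $B$ strictly inside — that is consistent — but now $a_1 \in A \subset (u,v)$ as well, so $u < a_1$; combined with $a_1 \leq b_1 \leq u$ this gives $u < a_1 \leq u$, a contradiction. The symmetric argument (with the roles of $A$ and $B$ swapped, using $a_1 \leq b_1$ and the triple $a_1 \leq b_1 \leq a_2$, or rather noting $b_1$ is sandwiched by $a_1 \leq b_1 \leq a_2$) rules out $B$ being contained in a gap of $A$: if $B \subset (u',v')$ a gap of $A$, then $b_1 \in (u',v')$ forces $u' < b_1$, while $a_1 \leq b_1$ and $a_1 \notin (u',v')$ gives $a_1 \leq u' < b_1 \leq a_2$, and $a_2 \notin (u',v')$ gives $a_2 \geq v' > b_1$; but $b_2 \in (u',v')$ needs $b_2 < v' \leq a_2$, contradicting $a_2 \leq b_2$. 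Hence neither containment holds and the sets are interleaved.

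\textbf{Step 2: Strong interleaving.} Now assume $\min\{(b_1 - a_1),(a_2-b_1),(b_2-a_2)\} \geq 2\epsilon$. Let $A', B'$ be compact with $d_{\mathrm H}(A,A') \leq \epsilon$ and $d_{\mathrm H}(B,B') \leq \epsilon$. By definition of the Hausdorff metric, each of the points $a_1, a_2$ lies within distance $\epsilon$ of some point of $A'$; pick $a_1' , a_2' \in A'$ with $|a_i - a_i'| \leq \epsilon$, and likewise $b_1', b_2' \in B'$ with $|b_i - b_i'| \leq \epsilon$. Then
$$b_1' - a_1' \geq (b_1 - \epsilon) - (a_1 + \epsilon) = (b_1 - a_1) - 2\epsilon \geq 0,$$
and similarly $a_2' - b_1' \geq (a_2 - b_1) - 2\epsilon \geq 0$ and $b_2' - a_2' \geq (b_2 - a_2) - 2\epsilon \geq 0$. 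Thus $a_1' \leq b_1' \leq a_2' \leq b_2'$ with $a_1', a_2' \in A'$ and $b_1', b_2' \in B'$, so by Step 1 applied to the pair $(A', B')$ these are interleaved. Since $A', B'$ were arbitrary compact $\epsilon$-perturbations, $A$ and $B$ are $\epsilon$-strongly interleaved.

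\textbf{Expected main obstacle.} There is no deep obstacle here; the only thing requiring care is Step 1, where one must handle the case analysis cleanly — in particular making sure that the non-strict inequalities $a_1 \leq b_1$ etc.\ interact correctly with the fact that gaps are \emph{open} intervals, so a point of the other set can sit on a gap endpoint without contradiction. The cleanest route is to observe directly: if $A$ lay in a gap $(u,v)$ of $B$ then $a_1, a_2 \in (u,v)$ forces $u < a_1$ and $a_2 < v$, but $b_1 \leq a_2 < v$ together with $b_1 \notin (u,v)$ (as $b_1 \in B$) forces $b_1 \leq u < a_1 \leq b_1$, a contradiction; the symmetric case is identical with $a$'s and $b$'s interchanged and the chain $a_1 \leq b_1 \leq a_2 \leq b_2$ read appropriately. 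Everything else is bookkeeping with the triangle inequality.
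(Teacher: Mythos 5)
Your proof is correct and takes essentially the same approach as the paper: a direct geometric argument for the interleaving claim, followed by the triangle-inequality perturbation argument (picking $a_i', b_i'$ within $\epsilon$ of $a_i, b_i$) for $\epsilon$-strong interleaving. The paper phrases Step~1 slightly differently --- it partitions the gaps of $A$ into those left and right of $a_2$ and uses $b_2 \geq a_2$ and $b_1 \leq a_2$ to rule both out (and symmetrically for gaps of $B$, pivoting at $b_1$), rather than assuming a containing gap $(u,v)$ and chasing a contradiction --- but the content, and the four points $a_1 \leq b_1 \leq a_2 \leq b_2$ doing the work, are the same.
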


\begin{proof}
    Let $A$ and $B$ be compact subsets of $\mathbb{R}$ with $a_1, a_2 \in A$ and $b_1, b_2 \in B$ satisfying $a_1 \leq b_1 \leq a_2 \leq b_2$.
    By definition, $A$ and $B$ are interleaved if neither is contained in a gap of the other.
    Since $a_2 \in A$ we can partition the gaps of $A$ into those that lie to the left of $a_2$ and those that lie to the right of $a_2$.
    $B$ cannot be contained in a gap to the left of $a_2$ because $b_2 \in B$ and $b_2 \geq a_2$.
    Similarly, $B$ cannot be contained in a gap to the right of $a_2$ because $b_1 \in B$ and $b_1 \leq a_2$.
    $A$ cannot be contained in a gap of $B$ by a similar argument partitioning the gaps of $B$ into those to the left and right of $b_1$.

    Suppose $\epsilon > 0$ and $\min \{ (b_1 - a_1) , (a_2 - b_1) , (b_2 - a_2) \} \geq 2\epsilon$.
    Let $A'$ and $B'$ be compact subsets of $\mathbb{R}$ satisfying $d_{\mathrm{H}}(A , A') , d_{\mathrm{H}}(B, B') \leq \epsilon$.
    Therefore, there exists $a_1' , a_2' \in A'$ and $b_1' , b_2' \in B'$ satisfying
    $$|a_1' - a_1| , |a_2' - a_2| , |b_1' - b_1| , |b_2' - b_2| \leq \epsilon.$$
    Hence
    $$(b_1' - a_1') \geq (b_1 - a_1) - |b_1' - b_1| - |a_1' - a_1| \geq 2\epsilon - \epsilon - \epsilon = 0,$$
    $$(a_2' - b_1') \geq (a_2 - b_1) - |a_2' - a_2| - |b_1' - b_1| \geq 2\epsilon - \epsilon - \epsilon = 0,$$
    $$(b_2' - a_2') \geq (b_2 - a_2) - |b_2' - b_2| - |a_2' - a_2| \geq 2\epsilon - \epsilon - \epsilon = 0.$$
    So $a_1' \leq b_1' \leq a_2' \leq b_2'$ and we conclude that $A'$ and $B'$ are interleaved by the first part of the lemma.
\end{proof}

Newhouse \cite{Newhouse1970} proved that information on the thickness and interleaving of two compact sets in $\mathbb{R}$ can be sufficient to imply a nonempty intersection of the sets.
This is the content of Theorem \ref{theorem: Newhouse} and will be used to prove Theorem \ref{theorem: B_3 theorem}.

\begin{customthm}{N}
    \label{theorem: Newhouse}
    Let $C_1$ and $C_2$ be interleaved compact subsets of $\mathbb{R}$ with thicknesses $\tau_1$ and $\tau_2$ respectively.
    If $\tau_1 \tau_2 \geq 1$ then $C_1 \cap C_2 \neq \emptyset$.
\end{customthm}

In our proof of Theorem \ref{theorem: main theorem}, we apply Theorem \ref{theorem: FandY}, which is a special case of a more general theorem proved by Falconer and Yavicoli \cite[Theorem 6]{FalYav2022}.

\begin{customthm}{F\&Y}
\label{theorem: FandY}
For any $m \in \mathbb{N}$, let $\seqj{C}{1}{m+2}$ be compact subsets of $\mathbb{R}$ where each $C_j$ has thickness at least $\tau$.
Assume that
\begin{enumerate}[(i)]
    \item $B = \bigcap_j \mathrm{conv}(C_j)$ is nonempty 
\end{enumerate}    
    and
\begin{enumerate}
    \item[(ii)] there exists $c \in (0,1)$ such that 
    \begin{equation}
        \label{equation: FandY inequality}
        (m+2)\tau^{-c} \leq \frac{1}{(432)^2}\beta^c(1-\beta^{1-c}),
    \end{equation}
    where
    $\beta = \min \left\{ \frac{1}{4} , \frac{|B|}{\max_j |C_j|} \right\}$.
\end{enumerate}
Then $\hdim (\bigcap_j C_j) \geq 1 - 1024(m+2)^{1/c}\tau^{-1} > 0$.
\end{customthm}


\subsection{A sufficient condition for $q \in \mathcal{B}_m$}
\label{subsection: a sufficient condition for q in B_m}

We define the maps $f_0$ and $f_1$ by
\begin{align*}
f_0&:\left[0,\frac{1}{q(q-1)}\right] \rightarrow I_q; &f_0(x) &= qx, \\
f_1&:\left[\frac{1}{q}, \frac{1}{q-1}\right] \rightarrow I_q; &f_1(x) &= qx - 1,
\end{align*}
and set $E_q = \{f_0 , f_1\}$ (Figure \ref{figure: base q expansion}).
For any map $h$, we denote its inverse by $h^{-1}$ and the $i$-fold composition $\underbrace{h \circ \cdots \circ h}_{i \ \mathrm{times}}$ is denoted by $h^i$. 

\begin{figure}[h]
    \centering
        \begin{subfigure}[t]{0.45\textwidth}
        \vskip 0pt
     \begin{tikzpicture} [scale = 7]
    \begin{scope}[thick]
        \draw (0,0) -- (1,0);
        \draw (0,0) -- (0,1);
        \draw (1,0) -- (1,1);
        \draw (0,1) -- (1,1);
        
        \draw [dotted] (0,0) -- (1,1);

        \draw [blue] (0,0) -- (0.4,0.67);
        \draw (0.4,0.67) -- (0.6,1);
        \draw [blue] (0.4,0) -- (0.67,0.444);
        \draw (0.67,0.444) -- (1,1);
        
        \draw [dotted] (0.6,1) -- (0.6,0);
        \draw [dotted] (0.4,0) -- (0.4,1);

        \draw [dashed] (0,0.67) -- (0.67,0.67);
        \draw [dashed] (0.67,0.67) -- (0.67,0);
    \node[above] at (0.2,1){\small $f_0$};
    \node[above] at (0.8,1){\small $f_1$};

    \node[below] at (0,0){\small $0$};
    \node[below] at (0.4,0){\small $\frac{1}{q}$};
    \node[below] at (0.6,0){\small $\frac{1}{q(q-1)}$};
    \node[below] at (1,0){\small $\frac{1}{q-1}$};

    \filldraw[black] (0.6,1) circle (0.2pt);
    \draw (0.6,1) circle (0.2pt);
    \filldraw[black] (0.4,0) circle (0.2pt);
    \filldraw[black] (0,0) circle (0.2pt);
    \filldraw[black] (1,1) circle (0.2pt);
    \end{scope}
    
        \end{tikzpicture}
            \caption{The maps generating base $q$ expansions.}
    \label{figure: base q expansion}
        \end{subfigure}
        \hfill
        \begin{subfigure}[t]{0.45\textwidth}
        \vskip 0pt
 \begin{tikzpicture} [scale = 7]
    \begin{scope}[thick]
        \draw (0,0) -- (1,0);
        \draw (0,0) -- (0,1);
        \draw (1,0) -- (1,1);
        \draw (0,1) -- (1,1);
        
        \draw [dotted] (0,0) -- (1,1);
        
        
        \draw [dotted] (0.6,1) -- (0.6,0);


        \draw [blue] (0,0) -- (0.6,1);
        \draw [blue] (0.6,0) -- (1,0.67);

    \node[above] at (0.2,1){\small $f_0$};
    \node[above] at (0.8,1){\small $f_1$};

    \node[below] at (0,0){\small $0$};
    \node[below] at (0.6,0){\small $\frac{1}{q}$};
    \node[below] at (1,0){\small $1$};

    \filldraw[white] (0.6,1) circle (0.2pt);
    \draw (0.6,1) circle (0.2pt);
    \filldraw[black] (0.6,0) circle (0.2pt);
    \filldraw[black] (0,0) circle (0.2pt);
    \filldraw[black] (1,0.67) circle (0.2pt);
    \end{scope}
    
        \end{tikzpicture}            
        \caption{The special case of the greedy expansion given by $f(x) = qx \pmod{1}$ on $[0,1]$.}
        \label{figure: greedy base q expansions}
        \end{subfigure}
        \caption{}
        \label{figure: variants of base q expansions}
\end{figure}

Note that $f_0$ and $f_1$ depend on $q$ implicitly, but in all cases the value of $q$ will be clear from context, so we suppress this in the notation.
The following lemma is a routine check from the definitions of the maps $f_0$, $f_1$ and $\pi_q$, and builds the picture of how the maps of $E_q$ `generate' the base $q$ expansions.

\begin{lemma}
    \label{lemma: relation between maps f and base q expansions}
    For any $q \in (1,2)$, $\seqj{\delta}{1}{m} \in \{0,1\}^*$, $\seqj{\epsilon}{1}{\infty} \in \{0,1\}^\mathbb{N}$ we have,
    $$f_{\delta_1}^{-1} \circ \cdots \circ f_{\delta_m}^{-1} (\pi_q(\seqj{\epsilon}{1}{\infty})) = \pi_q( \seqj{\delta}{1}{m} \seqj{\epsilon}{1}{\infty}).$$
\end{lemma}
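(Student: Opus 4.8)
The plan is to reduce everything to the single-digit case and then induct on the word length $m$. First I would record the explicit formulas for the inverse branches: since $f_0(x) = qx$ and $f_1(x) = qx - 1$, we have
\[
f_d^{-1}(y) = \frac{y + d}{q} \qquad (d \in \{0,1\}),
\]
and this is legitimate for every $y \in I_q = \left[0, \tfrac{1}{q-1}\right]$; indeed a direct check gives $f_0^{-1}(I_q) = \left[0, \tfrac{1}{q(q-1)}\right]$ and $f_1^{-1}(I_q) = \left[\tfrac1q, \tfrac{1}{q-1}\right]$, which are precisely the stated domains of $f_0$ and $f_1$.

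Next I would prove the base case $m = 1$: for any $d \in \{0,1\}$ and any $\seqj{\epsilon}{1}{\infty} \in \{0,1\}^{\mathbb{N}}$, reindexing the defining series and applying the formula for $f_d^{-1}$ gives
\[
\pi_q\big(d\,\seqj{\epsilon}{1}{\infty}\big) = d q^{-1} + \sum_{j=1}^{\infty} \epsilon_j q^{-(j+1)} = q^{-1}\Big( d + \sum_{j=1}^{\infty} \epsilon_j q^{-j} \Big) = q^{-1}\big(d + \pi_q(\seqj{\epsilon}{1}{\infty})\big) = f_d^{-1}\big(\pi_q(\seqj{\epsilon}{1}{\infty})\big).
\]

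Then I would run the induction on $m \geq 1$. Assuming the claim for words of length $m-1$, given $\seqp{\delta}{1}{m} \in \{0,1\}^*$ I would write
\[
f_{\delta_1}^{-1} \circ \cdots \circ f_{\delta_m}^{-1}\big(\pi_q(\seqj{\epsilon}{1}{\infty})\big) = f_{\delta_1}^{-1} \circ \cdots \circ f_{\delta_{m-1}}^{-1}\Big( f_{\delta_m}^{-1}\big(\pi_q(\seqj{\epsilon}{1}{\infty})\big) \Big) = f_{\delta_1}^{-1} \circ \cdots \circ f_{\delta_{m-1}}^{-1}\big(\pi_q(\delta_m \seqj{\epsilon}{1}{\infty})\big),
\]
the last equality being the base case applied to the digit $\delta_m$, and then invoke the inductive hypothesis for the length-$(m-1)$ word $\seqp{\delta}{1}{m-1}$ and the sequence $\delta_m \seqj{\epsilon}{1}{\infty} \in \{0,1\}^{\mathbb{N}}$ to finish. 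At each stage one should note that the argument being fed into the next inverse branch lies in $I_q$, which is automatic because $\pi_q$ takes values in $I_q$; hence every composition written above is applied within the domain of the relevant $f_{\delta_i}^{-1}$.

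There is essentially no genuine obstacle here: the content is a short computation (the index shift in the geometric series) plus a one-line induction, and the only point requiring a word of care is verifying that all intermediate points stay in $I_q$ so that the inverse maps are evaluated on their domains of definition.
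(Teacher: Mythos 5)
Your proof is correct and is exactly the "routine check from the definitions" that the paper alludes to without writing out: a base case by reindexing the geometric series, an induction on word length, and a check that intermediate points stay in $I_q$. Nothing to add.
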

The map $\pi_q$ also applies to the extended alphabet $\{-1 , 0 , 1\}^\mathbb{N}$.
Define $I_q^* = [ \frac{-1}{q-1} , \frac{1}{q-1} ]$.
If $\seqj{\epsilon}{1}{\infty} \in \{-1 , 0 , 1\}^\mathbb{N}$ then $\pi_q(\seqj{\epsilon}{1}{\infty}) \in I_q^* $.
Moreover, if we define the set of maps $E_q^*$ by $\{f_{-1}^* , f_0^* , f_1^* \}$ where
\begin{align*}
    \bqe{-1} &: \left[\frac{-1}{q-1} , \frac{2-q}{q(q-1)}\right] \rightarrow I_q^* ;
&\bqe{-1}(x) &= qx + 1, \\
    \bqe{0} &: \left[ \frac{-1}{q(q-1)}, \frac{1}{q(q-1)}\right] \rightarrow I_q^*;
&\bqe{0}(x) &= qx, \\
    \bqe{1} &: \left[ \frac{-(2-q)}{q(q-1)}, \frac{1}{q-1}\right] \rightarrow I_q^*;
&\bqe{1}(x) &= qx - 1,
\end{align*}
then Lemma \ref{lemma: relation between maps f and base q expansions} has the following natural generalisation.
However, we will not need this result until Subsection \ref{subsection: construction of A_q}.

\begin{lemma}
    \label{lemma: relation between maps and projections extension}
    For any $q \in (1,2)$, $\seqj{\delta}{1}{m} \in \{-1 , 0,1\}^*$, $\seqj{\epsilon}{1}{\infty} \in \{-1 ,0,1\}^\mathbb{N}$ we have,
    $$f_{\delta_1}^{*-1} \circ \cdots \circ f_{\delta_m}^{*-1} (\pi_q(\seqj{\epsilon}{1}{\infty})) = \pi_q( \seqj{\delta}{1}{m} \seqj{\epsilon}{1}{\infty}).$$
\end{lemma}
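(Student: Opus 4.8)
The plan is to mirror the proof of Lemma~\ref{lemma: relation between maps f and base q expansions}, since the only change is that the digit alphabet is enlarged from $\{0,1\}$ to $\{-1,0,1\}$ and the maps act on $I_q^*$ rather than $I_q$. First I would record that the three maps of $E_q^*$ are given by the single formula $f_d^*(x) = qx - d$ for $d \in \{-1,0,1\}$, so that $f_d^{*-1}(y) = q^{-1}(y+d)$. A quick inspection of the intervals listed in the definition of $E_q^*$ shows that each $f_d^*$ is a bijection onto $I_q^*$; hence each $f_d^{*-1}$ is defined on all of $I_q^*$, and since $\pi_q(\seqj{\epsilon}{1}{\infty}) \in I_q^*$ for every $\seqj{\epsilon}{1}{\infty} \in \{-1,0,1\}^{\mathbb{N}}$, every composition occurring in the statement is well-defined.

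Next I would prove the one-digit case. For $d \in \{-1,0,1\}$ and $\seqj{\epsilon}{1}{\infty} \in \{-1,0,1\}^{\mathbb{N}}$,
$$f_d^{*-1}(\pi_q(\seqj{\epsilon}{1}{\infty})) = q^{-1}\left( d + \sum_{j=1}^{\infty} \epsilon_j q^{-j} \right) = d q^{-1} + \sum_{j=1}^{\infty} \epsilon_j q^{-(j+1)} = \pi_q(d\,\seqj{\epsilon}{1}{\infty}).$$
The general statement then follows by induction on $m$: the case $m=1$ is exactly the identity above, and for the inductive step one applies the inductive hypothesis to the shorter word $\seqj{\delta}{2}{m}$, obtaining $f_{\delta_2}^{*-1} \circ \cdots \circ f_{\delta_m}^{*-1}(\pi_q(\seqj{\epsilon}{1}{\infty})) = \pi_q(\seqj{\delta}{2}{m}\,\seqj{\epsilon}{1}{\infty})$, and then applies $f_{\delta_1}^{*-1}$ to both sides, invoking the one-digit case once more with the prepended digit $\delta_1$.

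Alternatively, and perhaps more transparently, one can avoid induction by first establishing the closed form
$$f_{\delta_1}^{*-1} \circ \cdots \circ f_{\delta_m}^{*-1}(x) = q^{-m} x + \sum_{i=1}^{m} \delta_i q^{-i}$$
by a short telescoping computation (each application of an inverse map divides the accumulated expression by $q$ and adds one more digit in the leading position), and then substituting $x = \pi_q(\seqj{\epsilon}{1}{\infty}) = \sum_{j=1}^{\infty} \epsilon_j q^{-j}$ and reindexing the resulting double sum into $\pi_q(\seqj{\delta}{1}{m}\,\seqj{\epsilon}{1}{\infty})$.

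There is essentially no obstacle here: the content is the same elementary manipulation of the defining series of $\pi_q$ that underlies Lemma~\ref{lemma: relation between maps f and base q expansions}. The only additional bookkeeping is to confirm that the inverse maps are being applied to points of $I_q^*$ (not merely $I_q$), which is immediate from the domains and codomains recorded in the definition of $E_q^*$.
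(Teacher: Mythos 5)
Your proof is correct and matches the approach the paper implicitly intends: the paper states this lemma as a ``natural generalisation'' of Lemma \ref{lemma: relation between maps f and base q expansions} (itself described as ``a routine check from the definitions'') and offers no proof, relying precisely on the elementary one-digit computation and induction that you spell out. The closed-form alternative you mention is an equally valid variant of the same calculation.
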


For any $k \in \mathbb{N}_{\geq 2}$, $q \in (1,2)$, define $g_{q,k} = f_1^{-(k-1)} \circ f_0^{-1}$, where the functions $f_i$ are implicitly dependent on $q$.
For brevity, whenever $m \geq 0$, we define 
$$N_k^m(q) = \bigcap_{i=0}^m g_{q,k}^i (\mathcal{U}_q + 1) \cap g_{q,k}^m(\mathcal{U}_q).$$

Proposition \ref{proposition: intersection gives nonempty B} shows that to prove $q \in \mathcal{B}_{m+2}$ it is sufficient to prove that $N_k^m(q)$ is nonempty.
\begin{proposition}
    \label{proposition: intersection gives nonempty B}
    Let $k \geq 2$, $q \in (G,2)$ and $m \geq 0$.
    If $x$ is such that $f_0(x) \in N_k^m(q)$, then $x \in \mathcal{U}_q^{(m+2)} \cap J_q$, and hence $q \in \mathcal{B}_{m+2}$.
\end{proposition}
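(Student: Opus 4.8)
The plan is to count base $q$ expansions directly through the generating maps $f_0$, $f_1$, using nothing about $\mathcal{U}_q$ beyond the defining property that its elements have exactly one expansion. First I would record the \textbf{branching identity}: writing $\Sigma_q(z) = \emptyset$ whenever $z \notin I_q$, every $z \in I_q$ satisfies
$$|\Sigma_q(z)| = |\Sigma_q(f_0(z))| + |\Sigma_q(f_1(z))|,$$
where $f_0(z) = qz$ and $f_1(z) = qz - 1$. Indeed, by Lemma \ref{lemma: relation between maps f and base q expansions} an expansion of $z$ is determined by its first digit $\delta_1 \in \{0,1\}$ together with an expansion of $f_{\delta_1}(z)$, and $f_{\delta_1}(z) \in I_q$ exactly when $\delta_1$ is admissible; so the digit-$1$ branch is empty unless $z \geq 1/q$ and the digit-$0$ branch is empty unless $z \leq \tfrac{1}{q(q-1)}$.

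The heart of the argument — and the step I expect to be the main obstacle — is the following claim: \emph{for $k \geq 2$, $q \in (G,2)$ and $z \in I_q$ with $z \geq 1$, one has $g_{q,k}(z) \in [\tfrac1q, \tfrac1{q-1}]$ and $|\Sigma_q(g_{q,k}(z))| = |\Sigma_q(z)| + |\Sigma_q(z-1)|$.} To prove it I would unfold $g_{q,k} = f_1^{-(k-1)} \circ f_0^{-1}$, set $p_0 = f_0^{-1}(z)$ and $p_j = f_1^{-1}(p_{j-1})$ for $1 \le j \le k-1$, so $g_{q,k}(z) = p_{k-1}$, and check that each $p_j \in I_q$ (in fact $p_j \in [\tfrac1q, \tfrac1{q-1}]$ for $j \geq 1$). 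At $p_0$ the branching identity gives $|\Sigma_q(p_0)| = |\Sigma_q(z)| + |\Sigma_q(z-1)|$, the digit-$1$ branch $f_1(p_0) = z - 1$ being admissible because $z \geq 1$. At each $p_j$ with $j \geq 1$ one has $f_1(p_j) = p_{j-1}$ and $f_0(p_j) = q p_j = p_{j-1} + 1$, and here is where the hypothesis enters: $p_{j-1} \geq 1/q$ (automatic for $j-1 \geq 1$, and from $z \geq 1$ when $j-1 = 0$), while $1/q > \tfrac{2-q}{q-1}$ precisely when $q^2 - q - 1 > 0$, i.e.\ when $q > G$; hence $p_{j-1} + 1 > \tfrac{1}{q-1}$, the digit-$0$ branch is empty, and $|\Sigma_q(p_j)| = |\Sigma_q(p_{j-1})|$. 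Telescoping gives the claim. Keeping the boundary inequalities straight, and seeing that this is exactly the inequality that forces $q > G$, is the delicate point.

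Granting this, I would decode the hypothesis. Set $y = f_0(x)$, so $x = y/q$ and $y \in N_k^m(q)$. Choose $u \in \mathcal{U}_q$ with $y = g_{q,k}^m(u)$ (possible since $y \in g_{q,k}^m(\mathcal{U}_q)$) and put $t_i = g_{q,k}^{m-i}(u)$, so $t_0 = y$, $t_m = u$ and $t_i = g_{q,k}(t_{i+1})$. Each $f_\delta$ is an affine bijection onto its image, so $g_{q,k}$ and its iterates are injective; therefore the condition $y \in g_{q,k}^i(\mathcal{U}_q + 1)$ for $0 \le i \le m$ forces $t_i \in \mathcal{U}_q + 1$, i.e.\ $t_i - 1 \in \mathcal{U}_q$. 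Since $t_m = u \in I_q$ and (by the claim) $g_{q,k}$ maps $I_q$ into $I_q$, all $t_i \in I_q$, and $t_{i+1} \geq 1$ because $t_{i+1} - 1 \in \mathcal{U}_q \subseteq I_q$. Now induct down the chain $t_m, t_{m-1}, \dots, t_0$: $|\Sigma_q(t_m)| = |\Sigma_q(u)| = 1$, and for $0 \le i < m$ the claim yields
$$|\Sigma_q(t_i)| = |\Sigma_q(t_{i+1})| + |\Sigma_q(t_{i+1} - 1)| = |\Sigma_q(t_{i+1})| + 1,$$
using $t_{i+1} - 1 \in \mathcal{U}_q$. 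Hence $|\Sigma_q(y)| = m + 1$.

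A last application of the branching identity to $x = y/q \in [0, \tfrac{1}{q(q-1)}] \subseteq I_q$ gives $|\Sigma_q(x)| = |\Sigma_q(y)| + |\Sigma_q(y - 1)|$, and $y - 1 = t_0 - 1 \in \mathcal{U}_q$, so $|\Sigma_q(x)| = (m+1) + 1 = m + 2$; thus $x \in \mathcal{U}_q^{(m+2)}$ and $q \in \mathcal{B}_{m+2}$. For $x \in J_q$, note $y \in I_q$ with $y \geq 1$, so $x = y/q \in [\tfrac1q, \tfrac1{q(q-1)}]$, which is the region $J_q$. (When $m = 0$ the whole argument collapses to the observation that $y$ and $y-1$ both lie in $\mathcal{U}_q$, recovering \eqref{equation: q in B_2 condition}.)
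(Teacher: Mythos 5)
Your proof is correct, and it takes a genuinely different route from the paper's. The paper proceeds by induction on $m$ using the recursion $N_k^{m+1}(q) = (\mathcal{U}_q + 1) \cap g_{q,k}(N_k^m(q))$, and packages the crucial observation---that the $k-1$ preimages under $f_1$ inside $g_{q,k}$ lie outside the switch region whenever $q > G$, hence contribute no new branching---into the ``maps uniquely to'' machinery of Lemmas \ref{lemma: maps uniquely to condition} and \ref{lemma: switch region and B2 condition}, together with the orbit-space bijection of Lemma \ref{lemma: bijection between orbit space and sequence space}. You instead make the branching explicit via the identity $|\Sigma_q(z)| = |\Sigma_q(f_0(z))| + |\Sigma_q(f_1(z))|$, distill the whole mechanism into the single recurrence $|\Sigma_q(g_{q,k}(z))| = |\Sigma_q(z)| + |\Sigma_q(z-1)|$ for $z \in I_q$ with $z \geq 1$, and telescope along the chain $t_i = g_{q,k}^{m-i}(u)$. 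Both arguments hinge on the same inequality $\tfrac{1}{q}+1 > \tfrac{1}{q-1}$, equivalently $q > G$, which you correctly flag as the delicate point; you just verify it at the level of numbers rather than via the packaged lemma. Your version is more self-contained (it uses only Lemma \ref{lemma: relation between maps f and base q expansions} and elementary bookkeeping) and is arguably cleaner in isolating the counting, at the cost of not reusing the paper's lemmas. One subtlety you handled but should keep visible: $t_{i+1} \in \mathcal{U}_q + 1$ gives $t_{i+1} \geq 1$ but not the upper bound $t_{i+1} \leq \tfrac{1}{q-1}$ needed for $t_{i+1} \in I_q$; that upper bound comes from the previous application of your claim, so the downward induction genuinely needs both $t_{i+1} \in \mathcal{U}_q + 1$ and $t_{i+1} \in I_q$ carried together, as you did.
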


To prove this proposition we will need the following definitions and lemmas.
Let $q \in (1,2)$, $x \in I_q$ and recall that $\Sigma_q(x)$ is the set of base $q$ expansions of $x$.
We define the orbit space of $x$ by
$$\orbs{q}{x} = \{ (f_{\epsilon_j})_{j=1}^\infty \in \{f_0 , f_1\}^\mathbb{N} : f_{\epsilon_k} \circ \cdots \circ f_{\epsilon_1}(x) \in I_q \ \mathrm{for \ all \ } k \in \mathbb{N} \}.$$
The following lemma was proved by the first author in \cite[Lemma 2.4]{Baker2014golden}.

\begin{lemma}
    \label{lemma: bijection between orbit space and sequence space}
    For any $x \in I_q$, $\Sigma_q(x)$ is in bijection with $\orbs{q}{x}$ via the map $\seqj{\epsilon}{1}{\infty} \rightarrow (f_{\epsilon_j})_{j=1}^\infty$.
\end{lemma}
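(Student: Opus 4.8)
The statement to prove is Lemma~\ref{lemma: bijection between orbit space and sequence space}: for fixed $q \in (1,2)$ and $x \in I_q$, the map $\Phi \colon \seqj{\epsilon}{1}{\infty} \mapsto (f_{\epsilon_j})_{j=1}^\infty$ is a bijection from $\Sigma_q(x)$ onto $\orbs{q}{x}$. The proof is essentially bookkeeping with the maps $f_0, f_1$ and Lemma~\ref{lemma: relation between maps f and base q expansions}, so I would organise it into three clean pieces: (1) $\Phi$ maps $\Sigma_q(x)$ into $\orbs{q}{x}$; (2) $\Phi$ is injective; (3) $\Phi$ is surjective onto $\orbs{q}{x}$. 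The key computational fact underpinning everything is that for $\seqj{\epsilon}{1}{\infty} \in \{0,1\}^{\mathbb{N}}$ and any $k \in \mathbb{N}$,
$$
f_{\epsilon_k} \circ \cdots \circ f_{\epsilon_1}\left( \pi_q(\seqj{\epsilon}{1}{\infty}) \right) = \pi_q\!\left( \seqj{\epsilon}{k+1}{\infty} \right),
$$
which I would derive by applying $f_{\epsilon_k} \circ \cdots \circ f_{\epsilon_1}$ to the identity $f_{\epsilon_1}^{-1} \circ \cdots \circ f_{\epsilon_k}^{-1}\bigl(\pi_q(\seqj{\epsilon}{k+1}{\infty})\bigr) = \pi_q(\seqj{\epsilon}{1}{\infty})$ from Lemma~\ref{lemma: relation between maps f and base q expansions}, noting that each $f_{\epsilon_j}$ is invertible on its domain so these compositions are genuine inverses.

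\textbf{Step (1): well-definedness.} Let $\seqj{\epsilon}{1}{\infty} \in \Sigma_q(x)$, so $x = \pi_q(\seqj{\epsilon}{1}{\infty})$. By the displayed identity, $f_{\epsilon_k} \circ \cdots \circ f_{\epsilon_1}(x) = \pi_q(\seqj{\epsilon}{k+1}{\infty}) \in I_q$ for every $k$, since any element of $\{0,1\}^{\mathbb{N}}$ is a base $q$ expansion of its $\pi_q$-image and that image lies in $I_q$. Hence $\Phi(\seqj{\epsilon}{1}{\infty}) = (f_{\epsilon_j})_{j=1}^\infty \in \orbs{q}{x}$. (One should also remark that the compositions are well defined, i.e.\ that $f_{\epsilon_k} \circ \cdots \circ f_{\epsilon_1}(x)$ lies in the domain of $f_{\epsilon_{k+1}}$ whenever $\epsilon_{k+1}$ is the prescribed next digit — but this is immediate because that point equals $\pi_q(\seqj{\epsilon}{k+1}{\infty}) \in I_q$, and $\pi_q$ of a sequence starting with digit $b$ automatically lies in the domain of $f_b$, as one checks: a sequence in $\Sigma_q$ starting with $0$ has $\pi_q$-value in $[0, \tfrac{1}{q(q-1)}]$ and one starting with $1$ has value in $[\tfrac1q, \tfrac{1}{q-1}]$.)

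\textbf{Step (2): injectivity.} This is immediate: $\Phi$ records the digit sequence $\seqj{\epsilon}{1}{\infty}$ itself inside the sequence of maps, since $f_0 \neq f_1$. Formally, if $\Phi(\seqj{\epsilon}{1}{\infty}) = \Phi(\seqj{\epsilon'}{1}{\infty})$ then $f_{\epsilon_j} = f_{\epsilon'_j}$ for all $j$, forcing $\epsilon_j = \epsilon'_j$. \textbf{Step (3): surjectivity.} Let $(f_{\epsilon_j})_{j=1}^\infty \in \orbs{q}{x}$; this by definition gives a sequence $\seqj{\epsilon}{1}{\infty} \in \{0,1\}^{\mathbb{N}}$ with $f_{\epsilon_k} \circ \cdots \circ f_{\epsilon_1}(x) \in I_q$ for all $k$. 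I must show $x = \pi_q(\seqj{\epsilon}{1}{\infty})$, for then $\seqj{\epsilon}{1}{\infty} \in \Sigma_q(x)$ and $\Phi$ sends it to the given orbit. Applying Lemma~\ref{lemma: relation between maps f and base q expansions} with $\seqj{\delta}{1}{m} = \seqj{\epsilon}{1}{k}$ and the (arbitrary) tail obtained by writing $y_k := f_{\epsilon_k}\circ\cdots\circ f_{\epsilon_1}(x) \in I_q$ as $\pi_q$ of some base $q$ expansion, one gets $x = f_{\epsilon_1}^{-1}\circ\cdots\circ f_{\epsilon_k}^{-1}(y_k) = \pi_q(\seqj{\epsilon}{1}{k}\,\eta)$ for a suitable $\eta \in \{0,1\}^{\mathbb{N}}$; hence
$$
\left| x - \sum_{j=1}^{k} \epsilon_j q^{-j} \right| = \left| q^{-k} \pi_q(\eta) \right| \leq \frac{q^{-k}}{q-1},
$$
and letting $k \to \infty$ yields $x = \sum_{j=1}^\infty \epsilon_j q^{-j} = \pi_q(\seqj{\epsilon}{1}{\infty})$.

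\textbf{Main obstacle.} There is no deep obstacle — the content is entirely in correctly invoking Lemma~\ref{lemma: relation between maps f and base q expansions} and in the domain-compatibility checks. The one point requiring a little care is step (3): strictly speaking $\orbs{q}{x}$ is defined as a set of sequences of maps, and one must be careful that the intermediate points $y_k = f_{\epsilon_k}\circ\cdots\circ f_{\epsilon_1}(x)$ are indeed well defined (each lies in the domain of the next map applied), which the definition of $\orbs{q}{x}$ guarantees by fiat via the condition "$f_{\epsilon_k}\circ\cdots\circ f_{\epsilon_1}(x) \in I_q$ for all $k$" together with the observation from step (1) that a point of $I_q$ lies in the domain of $f_b$ precisely when it has a base $q$ expansion beginning with $b$ — and in $\orbs q x$ the next map $f_{\epsilon_{k+1}}$ is applied to $y_k$, so we need $y_k$ in the domain of $f_{\epsilon_{k+1}}$; this holds because $y_{k+1} = f_{\epsilon_{k+1}}(y_k) \in I_q$ is part of the hypothesis, and $f_{\epsilon_{k+1}}$ is only defined on that part of $I_q$ where it maps back into $I_q$. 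So the conditions defining $\orbs q x$ already encode all needed domain compatibility, and the estimate $\tfrac{q^{-k}}{q-1} \to 0$ closes the argument.
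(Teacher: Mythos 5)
Your proof is correct: the forward inclusion, injectivity, and the surjectivity argument via $x = \pi_q(\epsilon_1\ldots\epsilon_k\,\eta)$ and the tail estimate $q^{-k}/(q-1)\to 0$ are all sound, and you correctly handle the only delicate point (domain compatibility, i.e.\ that a point of $I_q$ lies in the domain of $f_b$ exactly when it admits an expansion beginning with $b$). The paper itself does not prove this lemma but cites \cite[Lemma 2.4]{Baker2014golden}; your argument is the standard one underlying that reference, so there is nothing in-paper to compare against and no gap to report.
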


This bijection allows one to see clearly how some $x \in I_q$ may have multiple base $q$ expansions.
The region $J_q = [\frac{1}{q} , \frac{1}{q(q-1)} ]$, known as the \textit{switch region} has the property that $x \in J_q$ if and only if $f_0(x) , f_1(x) \in I_q$.
Clearly, such an $x$ satisfies $|\orbs{q}{x}| \geq 2$ and hence $|\Sigma_q(x)| \geq 2$.
Let $x \in I_q \setminus J_q$, $y \in I_q$.
We say $x$ \textit{maps uniquely to} $y$ if there is a finite sequence of maps
$(f_{\epsilon_j})_{j=1}^m \in \{f_0 , f_1\}^*$ such that
$$f_{\epsilon_m} \circ \cdots \circ f_{\epsilon_1}(x) = y,$$
and
\begin{equation}
    \label{equation: maps uniquely to condition}
    f_{\epsilon_k} \circ \cdots \circ f_{\epsilon_1}(x) \in I_q \setminus J_q,
\end{equation}
for all $1 \leq k \leq m-1$.
Note that we allow $y \in J_q$.

\begin{lemma}
    \label{lemma: maps uniquely to condition}
    Let $q \in (1,2)$ and suppose $x \in I_q \setminus J_q$ maps uniquely to $y \in I_q$.
    Then $|\Sigma_q(x)| = |\Sigma_q(y)|$.
\end{lemma}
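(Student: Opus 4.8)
The plan is to exploit the bijection between $\Sigma_q(x)$ and $\orbs{q}{x}$ from Lemma \ref{lemma: bijection between orbit space and sequence space}, and to show that applying a fixed finite composition of maps $f_{\epsilon_m} \circ \cdots \circ f_{\epsilon_1}$ that witnesses ``$x$ maps uniquely to $y$'' induces a bijection between $\orbs{q}{x}$ and $\orbs{q}{y}$. Once we have that bijection of orbit spaces, the equality $|\Sigma_q(x)| = |\Sigma_q(y)|$ follows immediately by composing with Lemma \ref{lemma: bijection between orbit space and sequence space} on both sides.

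First I would set up notation: let $(f_{\epsilon_j})_{j=1}^m$ be the witnessing finite sequence, so that $f_{\epsilon_m} \circ \cdots \circ f_{\epsilon_1}(x) = y$ and the intermediate iterates $x_k := f_{\epsilon_k} \circ \cdots \circ f_{\epsilon_1}(x)$ lie in $I_q \setminus J_q$ for $1 \le k \le m-1$ (here $x_0 = x$, $x_m = y$). The key structural fact is: if a point $z \in I_q \setminus J_q$, then exactly one of $f_0(z), f_1(z)$ lies in $I_q$ — this is the contrapositive of the switch-region characterisation ``$z \in J_q \iff f_0(z), f_1(z) \in I_q$'', together with the elementary fact that at least one of $f_0(z), f_1(z)$ always lies in $I_q$ for $z \in I_q$ (which is just the statement that every $z \in I_q$ has a base $q$ expansion). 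Consequently, for a point $z \in I_q \setminus J_q$, every element of $\orbs{q}{z}$ must begin with the unique symbol $f_i$ for which $f_i(z) \in I_q$; moreover $\orbs{q}{z}$ is then in natural bijection with $\orbs{q}{f_i(z)}$ via deletion/prepending of that first symbol. Note $x = x_0$ itself lies in $I_q \setminus J_q$ by hypothesis, so this applies at every stage $k = 0, 1, \dots, m-1$.

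Next I would iterate this one-step bijection along the chain $x_0 \mapsto x_1 \mapsto \cdots \mapsto x_{m-1} \mapsto x_m = y$. At each stage $x_k$ (for $k < m$) lies in $I_q \setminus J_q$, so the unique forced symbol is $f_{\epsilon_{k+1}}$, and $\orbs{q}{x_k}$ is in bijection with $\orbs{q}{x_{k+1}}$ by prepending $f_{\epsilon_{k+1}}$. Composing these $m$ bijections gives a bijection $\orbs{q}{x} \to \orbs{q}{y}$ sending a sequence $\sigma$ to $(f_{\epsilon_1}, \dots, f_{\epsilon_m}) \sigma$ in one direction and truncating the first $m$ symbols in the other. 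Finally, applying Lemma \ref{lemma: bijection between orbit space and sequence space} to both $x$ and $y$, we get $|\Sigma_q(x)| = |\orbs{q}{x}| = |\orbs{q}{y}| = |\Sigma_q(y)|$, as required.

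The only genuinely delicate point — the ``main obstacle'' — is justifying cleanly that for $z \in I_q \setminus J_q$ at least one, and hence exactly one, of $f_0(z), f_1(z)$ lies in $I_q$; one must be slightly careful about the boundary/domain conventions for $f_0$ and $f_1$ (their domains are $[0, \tfrac{1}{q(q-1)}]$ and $[\tfrac{1}{q}, \tfrac{1}{q-1}]$ respectively, whose union is $I_q$, overlapping exactly on $J_q$). But this is precisely packaged by the definition of $J_q$ and the fact that every point of $I_q$ has an expansion, so it reduces to a short case-check rather than any real difficulty. The induction along the chain is then purely formal.
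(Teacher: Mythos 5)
Your proposal is correct and follows essentially the same route as the paper's proof: both invoke the bijection $\Sigma_q(z) \leftrightarrow \orbs{q}{z}$ from Lemma \ref{lemma: bijection between orbit space and sequence space}, use the fact that any $z \in I_q \setminus J_q$ has a unique admissible first map $f_\epsilon$, iterate this along the chain $x = x_0, x_1, \dots, x_m = y$ to show that every element of $\orbs{q}{x}$ is obtained from an element of $\orbs{q}{y}$ by prepending the fixed prefix $(f_{\epsilon_j})_{j=1}^m$, and conclude $|\orbs{q}{x}| = |\orbs{q}{y}|$. The only cosmetic difference is that you package the one-step statement as an explicit bijection and then compose, whereas the paper directly characterises the form of sequences in $\orbs{q}{x}$.
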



\begin{proof}
    Assuming the hypotheses of the lemma, we have $f_{\epsilon_k} \circ \cdots \circ f_{\epsilon_1}(x) \in I_q \setminus J_q$ for all $1 \leq k \leq m-1$, and $x \in I_q \setminus J_q$, where $(f_{\epsilon_j})_{j=1}^m$ exists by the assumption that $x$ maps uniquely to $y$.
    For any $z \in I_q \setminus J_q$, we can see by inspection of the domains of $f_0$ and $f_1$ that there is a unique element $\epsilon \in \{0,1\}$ such that $f_\epsilon(z) \in I_q$.
    Applying this to $x \in I_q \setminus J_q$ and to $f_{\epsilon_k} \circ \cdots \circ f_{\epsilon_1}(x)$ for every $1 \leq k \leq m-1$, we see that the sequence $(f_{\epsilon_j})_{j=1}^{m}$ is unique amongst those sequences which satisfy \eqref{equation: maps uniquely to condition} for all $1 \leq k \leq m-1$.
    Hence $(f_{\delta_j})_{j=1}^\infty \in \orbs{q}{x}$ if and only if $(f_{\delta_j})_{j=1}^\infty$ is of the form $(f_{\epsilon_j})_{j=1}^{m} (f_{\delta_j})_{j=m+1}^\infty$, where $(f_{\delta_j})_{j=m+1}^\infty \in \orbs{q}{f_{\epsilon_m} \circ \cdots \circ f_{\epsilon_1}(x)}$.
    Since $f_{\epsilon_m} \circ \cdots \circ f_{\epsilon_1}(x) = y$, we know $(f_{\delta_j})_{j=m+1}^\infty \in \orbs{q}{y}$.
    Therefore the elements of $\orbs{q}{x}$ are generated by taking the elements of $\orbs{q}{y}$ and adding a unique prefix of $(f_{\epsilon_j})_{j=1}^{m}$.
    Hence $|\orbs{q}{x}| = |\orbs{q}{y}|$ and by Lemma \ref{lemma: bijection between orbit space and sequence space} $|\Sigma_q(x)| = |\Sigma_q(y)|$.
\end{proof}

The first part of the following lemma is precisely stating the heuristic that if $q \in (G,2)$, then any orbit of a point under $E_q$ cannot remain in $J_q$ for more than one `step'.
The second part is a result touched upon in the introduction and is a consequence of Sidorov's work in \cite[Lemma 2.2]{sidorov2009expansions}, but for completion we prove it here anyway.
\begin{lemma}
    \label{lemma: switch region and B2 condition}
    Let $q \in (G,2)$.
    \begin{enumerate}[(a)]
        \item 
        \label{lemma item: maps out of switch}
        If $x \in J_q$ then for all $l \in \mathbb{N}$, the points $f_0^{-l}(x)$, $f_1^{-l}(x)$ map uniquely to $x$ via the sequences of maps given by $(f_0)_{j=1}^l$ and $(f_1)_{j=1}^l$ respectively.
        \item
        \label{lemma item: B2 condition}
        If $x \in I_q$ is such that $f_0(x) \in (\mathcal{U}_q + 1) \cap \mathcal{U}_q$ then $x \in \mathcal{U}_q^{(2)} \cap J_q$ and hence $q \in \mathcal{B}_2$.
    \end{enumerate}
\end{lemma}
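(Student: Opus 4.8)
The plan is to handle the two parts separately, with part \ref{lemma item: maps out of switch} serving as the main structural input and part \ref{lemma item: B2 condition} following by combining it with Lemma \ref{lemma: maps uniquely to condition}. For part \ref{lemma item: maps out of switch}, the key point is that when $q \in (G,2)$ the switch region $J_q = [\frac{1}{q},\frac{1}{q(q-1)}]$ is disjoint from its own preimages under $f_0$ and $f_1$; more precisely, I want to show that $f_0^{-1}(J_q)$ and $f_1^{-1}(J_q)$ lie inside $I_q \setminus J_q$. Since $f_0^{-1}(x) = x/q$ and $f_1^{-1}(x) = (x+1)/q$, the set $f_0^{-1}(J_q)$ is the interval $[\frac{1}{q^2}, \frac{1}{q^2(q-1)}]$ and $f_1^{-1}(J_q)$ is $[\frac{1}{q}+\frac{1}{q^2}, \frac{1}{q(q-1)}+\frac{1}{q^2}]$ — wait, more carefully $[\frac{q+1}{q^2}, \frac{1}{q(q-1)}]$ after simplification. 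The condition $q > G$ is exactly what guarantees these intervals avoid $J_q$: for the $f_0$ preimage one needs $\frac{1}{q^2(q-1)} < \frac{1}{q}$, i.e. $q^2 - q - 1 > 0$, which is $q > G$; the $f_1$ case is symmetric under the reflection $x \mapsto \frac{1}{q-1} - x$ of $I_q$, which swaps $f_0$ and $f_1$ and fixes $J_q$, so it follows automatically. Once I know $f_0^{-1}(J_q), f_1^{-1}(J_q) \subset I_q \setminus J_q$, an immediate induction shows $f_0^{-l}(x) \in I_q \setminus J_q$ and $f_1^{-l}(x) \in I_q\setminus J_q$ for all $l \geq 1$ whenever $x \in J_q$ (we also need $f_0^{-l}(x), f_1^{-l}(x) \in I_q$, which holds since these points are positive and bounded above by $\frac{1}{q(q-1)} < \frac{1}{q-1}$). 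The images $f_0^l(f_0^{-l}(x)) = x$ and intermediate iterates $f_0^{k}(f_0^{-l}(x)) = f_0^{-(l-k)}(x) \in I_q\setminus J_q$ for $1 \le k \le l-1$ verify condition \eqref{equation: maps uniquely to condition}, so indeed $f_0^{-l}(x)$ maps uniquely to $x$ via $(f_0)_{j=1}^l$, and likewise for $f_1$.

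For part \ref{lemma item: B2 condition}, suppose $f_0(x) \in (\mathcal{U}_q + 1) \cap \mathcal{U}_q$. First I note that since $f_0(x) \geq 1 > \frac{1}{q}$ (as $f_0(x) \in \mathcal{U}_q + 1$ forces $f_0(x) \geq 1$, using that $0$ is the minimum of $\mathcal{U}_q$), and since $f_0(x) \le \frac1{q-1}$, we get $x = f_0(x)/q \in [\frac1q, \frac1{q(q-1)}] = J_q$; so in particular $f_1(x)$ is also defined and lies in $I_q$. Next, $f_0(x) \in \mathcal{U}_q$ means $|\Sigma_q(f_0(x))| = 1$, and $f_0(x) \in \mathcal{U}_q + 1$ means $f_0(x) - 1 = f_1(x) \in \mathcal{U}_q$, so $|\Sigma_q(f_1(x))| = 1$ as well. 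Because $x \in J_q$, Lemma \ref{lemma: bijection between orbit space and sequence space} together with the switch-region property tells us $\orbs{q}{x}$ splits as those orbits beginning with $f_0$ and those beginning with $f_1$; by part \ref{lemma item: maps out of switch} (or rather its underlying fact that once you leave $J_q$ you never return while iterating, combined with Lemma \ref{lemma: maps uniquely to condition}) each of $f_0(x)$ and $f_1(x)$ lies in $I_q \setminus J_q$, hence the continuation of each branch is forced uniquely — no further branching occurs. Concretely, $f_0(x), f_1(x) \in I_q \setminus J_q$ both map uniquely to themselves (the trivial length-zero case), so $|\Sigma_q(x)| = |\Sigma_q(f_0(x))| + |\Sigma_q(f_1(x))| = 1 + 1 = 2$, giving $x \in \mathcal{U}_q^{(2)} \cap J_q$ and thus $q \in \mathcal{B}_2$.

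I expect the main obstacle to be the bookkeeping in part \ref{lemma item: maps out of switch}: verifying cleanly that $f_0^{-1}(J_q)$ and $f_1^{-1}(J_q)$ avoid $J_q$ requires being careful about which endpoint inequalities are strict and invoking $q > G$ at exactly the right place, and then the induction needs the observation that $f_0^{-1}$ and $f_1^{-1}$ map $I_q \setminus J_q$ (indeed even map the sub-interval $[0, \frac1{q(q-1)}]$) back into $I_q$ so the orbits stay well-defined. The symmetry argument ($x \mapsto \frac{1}{q-1} - x$ conjugating $f_0 \leftrightarrow f_1$ and fixing $J_q$) should let me avoid doing the $f_1$ computation by hand. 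A secondary subtlety in part \ref{lemma item: B2 condition} is justifying that the orbit count of $x \in J_q$ is exactly the sum of the orbit counts of $f_0(x)$ and $f_1(x)$ rather than merely bounded below by it; this is where it is essential that $f_0(x), f_1(x) \notin J_q$, which follows because $f_0(x), f_1(x) \in \mathcal{U}_q$ are unique-expansion points and a point of $J_q$ automatically has at least two expansions — so a unique-expansion point can never lie in $J_q$, and the branching terminates immediately.
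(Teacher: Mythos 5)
Your proof is correct and follows essentially the same structure as the paper's (direct computation that $f_0^{-1}(J_q)$ lands left of $J_q$ via $q^2-q-1>0$, iterate, then in part (b) note $f_0(x)-1=f_1(x)\in\mathcal{U}_q$ and count the two orbit branches via Lemma \ref{lemma: bijection between orbit space and sequence space}). One small note: your parenthetical ``after simplification'' for $f_1^{-1}(J_q)$ is off — the right endpoint is $\tfrac{q^2-q+1}{q^2(q-1)}$, not $\tfrac{1}{q(q-1)}$ — but since you immediately abandon that computation in favour of the conjugation $x\mapsto\tfrac{1}{q-1}-x$ (which does cleanly swap $f_0\leftrightarrow f_1$ and fix $J_q$), this slip does no harm; the symmetry observation is a tidy alternative to the paper's ``the case for $f_1$ is similar.''
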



\begin{proof}
    Part \ref{lemma item: maps out of switch}:
    Notice that $f_0^{-1}(x) = x/q$ is linear and strictly increasing for all $x \in I_q$ whenever $q \in (1,2)$.
    If $q \in (G,2)$ then $q^2 - q - 1 > 0$, so $\frac{q^{-2}}{q-1} < q^{-1}$ which is equivalent to $f_0^{-1}(\frac{1}{q(q-1)}) < \frac{1}{q}$.
    This means the image of the right endpoint of $J_q$ under $f_0^{-1}$ is less than the left endpoint of $J_q$.
    Hence, $f_0^{-1}(J_q) \cap J_q  = \emptyset$.
    Extending this we can see that $f_0^{-i}(\frac{1}{q(q-1)}) = \frac{1}{q^{i+1}(q-1)} < \frac{1}{q}$ for all $i \geq 1$ and hence $f_0^{-i}(J_q) \cap J_q = \emptyset$.
    Therefore, if $l \in \mathbb{N}$ and $x \in J_q$ is arbitrary, then $f_0^i \circ f_0^{-l}(x) \in f_0^{i-l}(J_q)$, hence $f_0^i \circ f_0^{-l}(x) \in I_q \setminus J_q$ for all $i = 0 , \ldots , l-1$.
    The case for $f_1$ is similar.

    Part \ref{lemma item: B2 condition}:
    Notice that for any $x \in J_q$, $f_0(x) - f_1(x) = 1$, so $f_0(x) - 1 = f_1(x)$.
    Therefore, if $f_0(x) \in (\mathcal{U}_q + 1)$ then $f_1(x) \in \mathcal{U}_q$ and since both $f_0(x), f_1(x) \in I_q$, we know that $x \in J_q$.
    If we also know that $f_0(x) \in \mathcal{U}_q$, then by Lemma \ref{lemma: bijection between orbit space and sequence space} $x \in \mathcal{U}_q^{(2)}$ and hence the claim.
\end{proof}

Equipped with Lemmas \ref{lemma: bijection between orbit space and sequence space}, \ref{lemma: maps uniquely to condition} and \ref{lemma: switch region and B2 condition} we are now able to prove Proposition \ref{proposition: intersection gives nonempty B}.

\begin{proof}[Proof of Proposition \ref{proposition: intersection gives nonempty B}]
    Fix any $q \in (1,2)$, $k \geq 2$ and proceed by induction on $m$.
    For the base case, let $m=0$ and suppose that $N_k^0(q) = (\mathcal{U}_q + 1) \cap \mathcal{U}_q \neq \emptyset$.
    Let $x \in I_q$ be such that $f_0(x) \in N_k^0(q)$.
    Then by Lemma \ref{lemma: switch region and B2 condition} \ref{lemma item: B2 condition}, $x \in \mathcal{U}_q^{(2)} \cap J_q$ and $q \in \mathcal{B}_2$.

    
    
    
    
    
    
    
    
    For the inductive step, notice that $N_k^{m+1}(q) = (\mathcal{U}_q + 1) \cap g_{q,k}(N_k^m(q))$.
    Assume the conclusion holds for $m$, namely that if $f_0(x) \in N_k^m(q)$ then $x \in \mathcal{U}_q^{(m+2)} \cap J_q$.
    Suppose that $x \in I_q$ is such that $f_0(x) \in N_k^{m+1}(q)$, then $f_1(x) \in \mathcal{U}_q$ because
    $f_0(x) \in (\mathcal{U}_q + 1)$.
    Since $f_0(x) \in g_{q,k}(N_k^m(q))$, we know that $f_0 \circ f_1^{k-1} \circ f_0(x) \in N_k^m(q)$.
    Therefore, by assumption, $f_1^{k-1} \circ f_0(x) \in \mathcal{U}_q^{(m+2)} \cap J_q$.
    Since $f_1^{k-1} \circ f_0(x) \in J_q$, Lemma \ref{lemma: switch region and B2 condition}\ref{lemma item: maps out of switch} implies that $f_0(x)$ maps uniquely to $f_1^{k-1} \circ f_0(x)$ and hence by Lemma \ref{lemma: maps uniquely to condition}, $f_0(x) \in \mathcal{U}_q^{(m+2)}$.
    We can conclude by Lemma \ref{lemma: bijection between orbit space and sequence space} that $x \in \mathcal{U}_q^{(m+3)}$ and since $f_0(x), f_1(x) \in I_q$, we know $x \in J_q$.
\end{proof}

Since $q > q_{k-1}$ implies that $\pi_q(S_{k-1}) \subset \mathcal{U}_q$ by Lemma \ref{lemma: containment in U_q} \ref{lemma item: U_q contains S_k}, we have the following lemma as an immediate corollary of Proposition \ref{proposition: intersection gives nonempty B}.

\begin{lemma}
\label{lemma: nonempty intersection}
    If $k \geq 2$, $m \geq 1$, $q > q_{k-1}$ and the intersection
    \begin{equation}
    \label{equation: intersection of S_{k-1}}
        \bigcap_{i=0}^m g_{q,k}^i(\pi_q(S_{k-1}) + 1) \cap g_{q,k}^m(\pi_q(S_{k-1})),
    \end{equation}
    is nonempty, then $q \in \mathcal{B}_{m+2}$.
\end{lemma}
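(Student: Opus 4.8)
The plan is to deduce Lemma~\ref{lemma: nonempty intersection} directly from Proposition~\ref{proposition: intersection gives nonempty B} by showing that the hypotheses of the proposition are met whenever the intersection \eqref{equation: intersection of S_{k-1}} is nonempty. Recall that
$$N_k^m(q) = \bigcap_{i=0}^m g_{q,k}^i(\mathcal{U}_q + 1) \cap g_{q,k}^m(\mathcal{U}_q),$$
so the only thing separating \eqref{equation: intersection of S_{k-1}} from $N_k^m(q)$ is that the former uses $\pi_q(S_{k-1})$ in place of $\mathcal{U}_q$ in each factor. The key observation is the monotonicity of the whole construction: if $X \subseteq Y$ are compact subsets of $I_q$, then $X + 1 \subseteq Y + 1$ and, since $g_{q,k}$ is a (well-defined, injective) affine map on the relevant domains, $g_{q,k}^i(X+1) \subseteq g_{q,k}^i(Y+1)$ and $g_{q,k}^m(X) \subseteq g_{q,k}^m(Y)$ for every $i$ and $m$. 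Intersecting these containments factor-by-factor gives that the set in \eqref{equation: intersection of S_{k-1}} is contained in $N_k^m(q)$.

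First I would verify the hypothesis $q > q_{k-1}$ lets us invoke Lemma~\ref{lemma: containment in U_q}\ref{lemma item: U_q contains S_k}, which gives $\pi_q(S_{k-1}) \subseteq \mathcal{U}_q$. (Note the index shift: the lemma is stated for $\pi_q(S_k) \subseteq \mathcal{U}_q$ when $q > q_k$ and $k \geq 2$, so applying it with $k-1$ in place of $k$ requires $k-1 \geq 2$, i.e.\ $k \geq 3$; for the boundary case $k=2$ one notes $S_1 = \{0^\infty, 1^\infty\}$ and $\pi_q(S_1) = \{0, \tfrac{1}{q-1}\} \subseteq \mathcal{U}_q$ trivially, or one simply reads the cited hypothesis $k\ge 2$ as including this.) Next, with the containment $\pi_q(S_{k-1}) \subseteq \mathcal{U}_q$ in hand, the monotonicity argument above shows the set in \eqref{equation: intersection of S_{k-1}} is a subset of $N_k^m(q)$; hence if the former is nonempty, so is $N_k^m(q)$. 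Then pick any point $y \in N_k^m(q)$ and, since $f_0$ is a bijection onto its image with $f_0^{-1}(y) = y/q \in I_q$ (as $N_k^m(q) \subseteq \mathcal{U}_q \subseteq I_q$ forces $y$ into the range of $f_0$), set $x = f_0^{-1}(y)$, so that $f_0(x) = y \in N_k^m(q)$. Applying Proposition~\ref{proposition: intersection gives nonempty B} (whose hypotheses $k \geq 2$, $q \in (G,2)$ — note $q > q_{k-1} \geq q_1$, and in fact $q > q_{k-1} \geq G$ when $k \geq 3$, while the $k=2$ case needs $q \in (G,2)$ which should be folded into the statement or is implicit — and $m \geq 0$ are all satisfied, using $m \geq 1 > 0$) yields $x \in \mathcal{U}_q^{(m+2)} \cap J_q$, and in particular $q \in \mathcal{B}_{m+2}$.

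There is essentially no deep obstacle here; the statement is flagged in the text as "an immediate corollary." The one point that demands a little care is the monotonicity step: one must confirm that $g_{q,k} = f_1^{-(k-1)} \circ f_0^{-1}$ is genuinely a well-defined affine bijection on the relevant pieces, so that applying it to a containment of sets preserves the containment and commutes with intersection. Since each $f_i^{-1}$ is affine and strictly increasing, their composition $g_{q,k}$ is affine and strictly increasing, hence injective; injectivity is exactly what guarantees $g_{q,k}^i\big(\bigcap_\alpha Z_\alpha\big) = \bigcap_\alpha g_{q,k}^i(Z_\alpha)$ and that $A \subseteq B \Rightarrow g_{q,k}^i(A) \subseteq g_{q,k}^i(B)$. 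The domain-of-definition issue — that the various compositions $g_{q,k}^i$ are actually applied to points where they make sense — is already handled by the fact that \eqref{equation: intersection of S_{k-1}} is written down and assumed nonempty, and by the corresponding bookkeeping inside the proof of Proposition~\ref{proposition: intersection gives nonempty B}. So the write-up is short: state the containment $\pi_q(S_{k-1}) \subseteq \mathcal{U}_q$, push it through $g_{q,k}$ and translation by $1$, intersect, conclude the set in \eqref{equation: intersection of S_{k-1}} sits inside $N_k^m(q)$, take a point and pull it back by $f_0$, and quote Proposition~\ref{proposition: intersection gives nonempty B}.
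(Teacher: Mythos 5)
Your proof is correct and matches the paper's approach precisely: invoke Lemma~\ref{lemma: containment in U_q}\ref{lemma item: U_q contains S_k} to obtain $\pi_q(S_{k-1}) \subset \mathcal{U}_q$, push that containment through each factor to see that the set in \eqref{equation: intersection of S_{k-1}} lies inside $N_k^m(q)$, and then quote Proposition~\ref{proposition: intersection gives nonempty B}. The paper states this as a one-line corollary; you have spelled out the monotonicity bookkeeping, the pullback by $f_0$, and the minor hypothesis mismatches (the $k=2$ index shift in Lemma~\ref{lemma: containment in U_q}, and that $q \in (G,2)$ is only automatic from $q > q_{k-1}$ when $k \geq 3$), all of which is sound.
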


We will see that it will be useful to consider subsets of the sets in \eqref{equation: intersection of S_{k-1}} rather than the sets in $N_k^m(q)$ because of our existing knowledge of the structure of the sets $\pi_q(S_{k-1})$.

\section{Proof of Theorem \ref{theorem: main theorem}}
\label{section: proof of main theorem}

Subsection \ref{subsection: Proof outline main theorem} serves to introduce our approach to proving Theorem \ref{theorem: main theorem}.

\subsection{Proof outline}
\label{subsection: Proof outline main theorem}

By Proposition \ref{proposition: intersection gives nonempty B}, for all $m \in \mathbb{N}$, if $N_k^m(q) \neq \emptyset$ then $q \in \mathcal{B}_{m+2}$, moreover if $f_0(x) \in N_k^m(q)$ then $x \in \mathcal{U}_q^{(m+2)} \cap J_q$, so a lower bound on the Hausdorff dimension of $N_k^m(q)$ provides us with a lower bound on the Hausdorff dimension of $\mathcal{U}_q^{(m+2)}$.
We seek to modify the sets in $N_k^m(q)$ in order to apply Theorem \ref{theorem: FandY}.
More precisely, for $i = 0, \ldots , m$, we seek subsets $P_i(q) \subset g_{q,k}^i(\mathcal{U}_q + 1)$ and $Q_m(q) \subset g_{q,k}^m(\mathcal{U}_q)$ such that Theorem \ref{theorem: FandY} can be applied to the collection of compact subsets of $\mathbb{R}$ given by $\{P_0(q) , \ldots , P_m(q) , Q_m(q)\}$.
For any given finite collection of sets $A_i \subset \mathbb{R}$, with arbitrary subsets $B_i \subset A_i$, it is obvious that if $\cap_i B_i \neq \emptyset$ then $\cap_i A_i \neq \emptyset$.
Hence, if 
$$M_k^m(q) = \bigcap_{i=0}^m P_i(q) \cap Q_m(q),$$
is nonempty, then $N_k^m(q) \neq \emptyset$ and $q \in \mathcal{B}_{m+2}$. 
Similarly, if $f_0(x) \in M_k^m(q)$ then $x \in \mathcal{U}_q^{(m+2)} \cap J_q$, so $\hdim(M_k^m(q)) \leq \hdim (\mathcal{U}_q^{(m+2)})$.
For $k$ sufficiently large, and $q > q_{k-1}$, each of the sets $P_0(q) , \ldots , P_m(q) , Q_m(q)$ is shown to have thickness at least $q^{k-4}$.
Let $B(q) = \cap_{i=0}^m \mathrm{conv}(P_i(q)) \cap \mathrm{conv}(Q_m(q))$ (see Figure \ref{figure: relative structure of sets}), which we show is nonempty and let 
$$\beta_q = \min \left\{ \frac{1}{4} , \frac{|B(q)|}{\max\{|P_0(q)| , \ldots , |P_m(q)| , |Q_m(q)|\}} \right\}.$$
$\beta_q$ corresponds to $\beta$ in Theorem \ref{theorem: FandY} except that we have dependency on $q$ here because the sets we are applying Theorem \ref{theorem: FandY} to depend on $q$.
To conclude Theorem \ref{theorem: main theorem} from Theorem \ref{theorem: FandY} it remains to find some $c \in (0,1)$ such that
\begin{equation}
\label{equation: applying FandY to P and Q}
  (m+2)(q^{k-4})^{-c} \leq \frac{1}{(432)^2}\beta_q^c(1-\beta_q^{1-c}),  
\end{equation}
holds for all $m \in \mathbb{N}$, $k \geq K_m$ and $|q-q_k| < q_k^{-(m+2)k-3}$.
Note that $c$ could depend on $q$ because the sets we are applying Theorem \ref{theorem: FandY} to depend on $q$.
However, since we eventually show that a constant value of $c$ satisfies \eqref{equation: applying FandY to P and Q} 
under the required conditions,
the standalone $c$ notation reflects the independence on $q$.
By requiring $|q - q_k| < q_k^{-(m+2)k-3}$ we achieve a uniform lower bound
on $\beta_q$ when $k \geq K_m$.
This bound on $\beta_q$ allows us to choose $c \in (0,1)$ to satisfy \eqref{equation: applying FandY to P and Q} as described above.
From here, Theorem \ref{theorem: FandY} tells us, under our assumptions $m \in \mathbb{N}$, $k \geq K_m$ and $|q-q_k| < q_k^{-(m+2)k-3}$, that $\hdim(\mathcal{U}_q^{(m+2)}) \geq \hdim(M_k^m(q)) \geq 1 - 1024(m+2)^{1/c}q^{k-4} > 0$.
Moreover, an immediate implication of this is that $\mathcal{U}_q^{(m+2)}$ is nonempty so $q \in \mathcal{B}_{m+2}$.
The majority of the work of the proof is in showing that the sets $P_0(q) , \ldots , P_m(q) , Q_m(q)$ satisfy the hypotheses of Theorem \ref{theorem: FandY}.
We will see that it is simpler to consider $P_i(q)$ as a subset of $g_{q,k}^i(\pi_q(S_{k-1}) + 1)$ for all $i = 0 , \ldots , m$ and $Q_m(q)$ as a subset of $g_{q,k}^m(\pi_q(S_{k-1}))$.
This is justified by Lemma \ref{lemma: nonempty intersection}.
In the following section we construct the subsets $P_i(q)$ of $g_k^i(\pi_q(S_{k-1}) + 1)$ for all $0 \leq i \leq m$ and $Q_m(q)$ of $g_k^m(\pi_q(S_{k-1}))$.

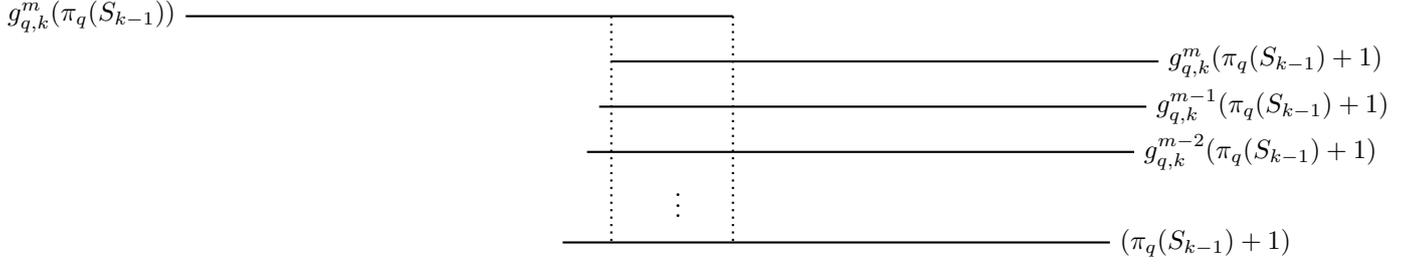
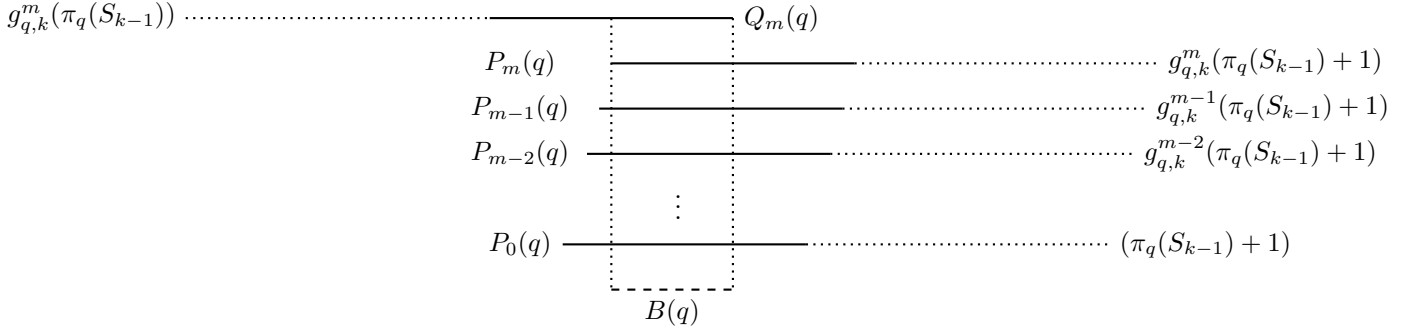
\begin{figure}
    \centering
    \begin{subfigure}[t]{\textwidth}
    \hspace{-0.7cm}
        \begin{tikzpicture}[yscale = 1.2, xscale = 0.8]
        \begin{scope}[thick]
            \draw (-7,0.5) -- (2,0.5);
            \draw (0,0) -- (9,0);
            \draw (-0.2,-0.5) -- (8.8,-0.5);
            \draw (-0.4,-1) -- (8.6,-1);
            \node at (1.1,-1.5) {$\vdots$};
            \draw (-0.8,-2) -- (8.2,-2);


            \draw[dotted] (0,0.5) -- (0,-2);
            \draw[dotted] (2,0.5) -- (2,-2);


            \node[left]   at (-7,0.5) {$g_{q,k}^m(\pi_q(S_{k-1}))$};
            \node [right] at (9,0) {$g_{q,k}^m(\pi_q(S_{k-1}) + 1)$};
            \node [right] at (8.8,-0.5) {$g_{q,k}^{m-1}(\pi_q(S_{k-1}) + 1)$};
            \node [right] at (8.6,-1) {$g_{q,k}^{m-2}(\pi_q(S_{k-1}) + 1)$};
            \node [right] at (8.2,-2) {$(\pi_q(S_{k-1}) + 1)$};

        \end{scope}
    \end{tikzpicture}
    \caption{The relative structure of the convex hulls of the sets in \eqref{equation: intersection of S_{k-1}} in the case when $|q - q_k| < q_k^{-2k-6}$ and $q < q_k$.}
        \label{figure: relative structure of convex hulls of large sets}
    \end{subfigure}
    \begin{subfigure}[t]{\textwidth}
    \hspace{-0.7cm}
        \begin{tikzpicture}[yscale = 1.2, xscale = 0.8]
        \begin{scope}[thick]
            \draw (-2,0.5) -- (2,0.5);
            \draw (0,0) -- (4,0);
            \draw (-0.2,-0.5) -- (3.8,-0.5);
            \draw (-0.4,-1) -- (3.6,-1);
            \node at (1.1,-1.5) {$\vdots$};
            \draw (-0.8,-2) -- (3.2,-2);

            \draw[dotted] (-7,0.5) -- (-2,0.5);
            \draw[dotted] (4,0) -- (9,0);
            \draw[dotted] (3.8,-0.5) -- (8.8,-0.5);
            \draw[dotted] (3.6,-1) -- (8.6,-1);
            \draw[dotted] (3.2 ,-2) -- (8.2,-2);

            \draw[dotted] (0,0.5) -- (0,-2.5);
            \draw[dotted] (2,0.5) -- (2,-2.5);
            \draw[dashed] (0,-2.5) -- (2,-2.5);

            \node at (2.8,0.5) {$Q_m(q)$};
            \node at (-1.5,0) {$P_m(q)$};
            \node at (-1.5,-0.5) {$P_{m-1}(q)$};
            \node at (-1.5, -1) {$P_{m-2}(q)$};
            \node at (-1.5,-2) {$P_0(q)$};

            \node[left]   at (-7,0.5) {$g_{q,k}^m(\pi_q(S_{k-1}))$};
            \node [right] at (9,0) {$g_{q,k}^m(\pi_q(S_{k-1}) + 1)$};
            \node [right] at (8.8,-0.5) {$g_{q,k}^{m-1}(\pi_q(S_{k-1}) + 1)$};
            \node [right] at (8.6,-1) {$g_{q,k}^{m-2}(\pi_q(S_{k-1}) + 1)$};
            \node [right] at (8.2,-2) {$(\pi_q(S_{k-1}) + 1)$};

            \node[below] at (1,-2.5) {$B(q)$};
        \end{scope}
    \end{tikzpicture}
    \caption{For $q$ as in Figure \ref{figure: relative structure of convex hulls of large sets}, the figure shows the process of taking subsets given by $\{P_0(q) , \ldots , P_m(q) , Q_m(q)\}$, and their overlap given by $B(q)$.
    For clarity, only the convex hulls of the $P_0(q) , \ldots , P_m(q) , Q_m(q)$ sets are shown.}
        \label{figure: relative structure of convex hulls of small sets}
    \end{subfigure}
    \caption{The process of taking subsets of affine images of $\pi_q(S_{k-1})$ in order to bound $\beta_q$.
    For $|q - q_k| < q_k^{-2k-6}$, the relative structure of the sets in all cases where $q < q_k$, $q = q_k$ and $q > q_k$ are shown in Figure \ref{figure: relative structure of P and Q sets}.}
    \label{figure: relative structure of sets}
\end{figure}

\subsection{Construction of $P_i(q)$ and $Q_m(q)$}

Fix $m \in \mathbb{N}$, $k \geq 3$ and $q > q_{k-1}$.
Recall from Subsection \ref{subsection: thickness and interleaving} that any compact subset of $\mathbb{R}$ admits a construction as a complement of at most countably many gaps.
Let $G(q)$ be the gap\footnote{Recall that we know this is a gap of $\pi_q(S_{k-1})$ by Lemma \ref{lemma: containment in U_q}\ref{lemma item: containment in U_q Cantor set}.} of $\pi_q(S_{k-1})$ given by the open interval 
$$G(q) = (\pi_q(0^{k-3} (01^{k-1})^\infty) , \pi_q(0^{k-3} (10^{k-1})^\infty)).$$
Similarly let $H(q)$ be the gap of $\pi_q(S_{k-1})$ given by the open interval 
$$H(q) = (\pi_q(1^{k-3}(01^{k-1})^\infty) , \pi_q(1^{k-3}(10^{k-1})^\infty)).$$
Let $0 \leq i \leq m$ and define $P_i(q)$ to be the subset of $g_{q,k}^i(\pi_q(S_{k-1}) + 1)$ to the left of the gap $G_i(q)$ where
$$G_i(q)     = g_{q,k}^i \left( q^{-(m-i)k}G(q) + 1 \right),$$
that is,
$$P_i(q) = g_{q,k}^i(\pi_q(S_{k-1}) + 1) \cap [g_{q,k}^i(1) , g_{q,k}^i(1 + q^{-(m-i)k}\pi_q(0^{k-3}(10^{k-1})^\infty))]. $$
Similarly, define $Q_m(q)$ to be the subset of $g_{q,k}^m(\pi_q(S_{k-1}))$ to the right of the gap
$H_m(q)$ where
$$H_m(q)     = g_{q,k}^m \left( H(q) \right),$$
that is,
$$Q_m(q) = g_{q,k}^m(\pi_q(S_{k-1})) \cap [g_{q,k}^m(\pi_q(1^{k-3}(10^{k-1})^\infty)) , g_{q,k}^m(\pi_q(1^\infty))].$$

Since 
$$(\pi_q(0^{(m-i)k} 0^{k-3} (01^{k-1})^\infty) , \pi_q(0^{(m-i)k} 0^{k-3} (10^{k-1})^\infty))$$
is also a gap of $\pi_q(S_{k-1})$ for all $0 \leq i \leq m$ by Lemma \ref{lemma: containment in U_q} \ref{lemma item: containment in U_q Cantor set}, and we can write the gap $G_i(q)$ as
$$g_{q,k}^i\left( \pi_q(0^{(m-i)k} 0^{k-3} (01^{k-1})^\infty) + 1 , \pi_q(0^{(m-i)k} 0^{k-3} (10^{k-1})^\infty) + 1 \right),$$
we know that $G_i(q)$ is a gap of $g_{q,k}^i(\pi_q(S_{k-1})+ 1)$.
Similarly, $H_m(q)$ is a gap of $g_{q,k}^m(\pi_q(S_{k-1}))$.
By Lemma \ref{lemma: thickness of S_k}, we know that $\tau(\pi_q(S_{k-1})) > q^{k-4}$.
To apply Theorem \ref{theorem: FandY} we need to make sure the process of taking the subsets $P_0(q) , \ldots , P_m(q) , Q_m(q)$ described above preserves this lower bound on the thickness.
This is the content of Lemma \ref{lemma: bound thickness of P_i below}.
To prove this, we will need the following definition and a lemma which is another consequence of \cite[Lemma 4]{GlenSid2001}.
We say that $\seqj{\epsilon}{1}{\infty}$ is \textit{lexicographically less than} $\seqj{\epsilon'}{1}{\infty}$ if $\epsilon_i < \epsilon'_i$ where $i = \min\{j : \epsilon_j \neq \epsilon'_j\}$.
In this case we write $\seqj{\epsilon}{1}{\infty} \prec \seqj{\epsilon'}{1}{\infty}$.

\begin{lemma}
\label{lemma: properties of gaps}
Let $k \geq 2$, $q \in (q_k,2)$ and let $ \delta_1, \delta_2 \in \{0,1\}^*$ be finite sequences which avoid $(01^k)$ and $(10^k)$.\
For $i \in \{1,2\}$, let $G_i(q)$ be the gap of $\pi_q(S_k)$ given by the open interval
$$\left(\pi_q(\delta_i (01^{k-1})^\infty) , \pi_q(\delta_i (10^{k-1})^\infty)\right).$$
    \begin{enumerate}[(a)]
        \item 
        If $|\delta_1| < |\delta_2|$ then $|G_1| > |G_2|$.
        \item 
        If $|\delta_1| = |\delta_2|$ and $\delta_1 \prec \delta_2$ then the right endpoint of $G_1$ is less than the left endpoint of $G_2$.
    \end{enumerate}
\end{lemma}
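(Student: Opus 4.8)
The plan is to leverage the explicit description of gaps of $\pi_q(S_k)$ provided by Lemma \ref{lemma: containment in U_q}\ref{lemma item: containment in U_q Cantor set}, together with Lemma \ref{lemma: relation between maps f and base q expansions}, which lets one translate statements about the projections of concatenated words into statements about compositions of the contraction maps $f_i^{-1}$. The key observation is that a gap indexed by a word $\delta$ is really the $\pi_q$-image of the ``interval of sequences'' lexicographically between $\delta(01^{k-1})^\infty$ and $\delta(10^{k-1})^\infty$, and since $\pi_q$ is order-preserving on $\{0,1\}^\mathbb{N}$ (under the lexicographic order), both parts of the lemma should reduce to comparing suitable sequences lexicographically, with the lengths converted into powers of $q^{-1}$.

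For part (a), I would first note that by Lemma \ref{lemma: relation between maps f and base q expansions} the gap $G_i$ has length
\[
|G_i| = \pi_q(\delta_i(10^{k-1})^\infty) - \pi_q(\delta_i(01^{k-1})^\infty) = q^{-|\delta_i|}\bigl(\pi_q((10^{k-1})^\infty) - \pi_q((01^{k-1})^\infty)\bigr),
\]
because prepending the word $\delta_i$ corresponds to applying $f_{\delta_{i,1}}^{-1}\circ\cdots\circ f_{\delta_{i,|\delta_i|}}^{-1}$, a composition of $|\delta_i|$ affine maps each with slope $q^{-1}$, hence an affine map of slope $q^{-|\delta_i|}$; and affine maps preserve the difference up to this scaling factor. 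The quantity $C_k := \pi_q((10^{k-1})^\infty) - \pi_q((01^{k-1})^\infty)$ is a fixed positive constant depending only on $k$ and $q$, so $|G_1|/|G_2| = q^{|\delta_2| - |\delta_1|} > 1$ precisely when $|\delta_1| < |\delta_2|$. This handles (a) cleanly.

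For part (b), with $|\delta_1| = |\delta_2|$ and $\delta_1 \prec \delta_2$, I want to show the right endpoint $\pi_q(\delta_1(10^{k-1})^\infty)$ is less than the left endpoint $\pi_q(\delta_2(01^{k-1})^\infty)$. Since $\delta_1 \prec \delta_2$ and the words have equal length, we have $\delta_1(10^{k-1})^\infty \prec \delta_2 (01^{k-1})^\infty$ as infinite sequences (the first disagreement already occurs within the $\delta$-block). The point to be careful about is that the lexicographically-larger sequence need not have strictly larger $\pi_q$-value in general for $q<2$ — but here both sequences are in $S_k$ (or their endpoints are limits of such), and more to the point, the two gaps are genuine gaps of the Cantor set $\pi_q(S_k)$ that are disjoint from it and from each other. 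So the cleaner route is: both $G_1$ and $G_2$ are gaps of $\pi_q(S_k)$ by Lemma \ref{lemma: containment in U_q}\ref{lemma item: containment in U_q Cantor set}, distinct gaps of a set in $\mathbb{R}$ are disjoint and linearly ordered, and it remains only to determine which is to the left. For that I would compare, say, the left endpoints: $\pi_q(\delta_1(01^{k-1})^\infty)$ versus $\pi_q(\delta_2(01^{k-1})^\infty)$; writing $i_0 = \min\{j : \delta_{1,j} \neq \delta_{2,j}\}$, after applying the common prefix maps $f^{-1}$ of length $i_0-1$ we reduce to comparing $\pi_q(0\,\sigma_1)$ with $\pi_q(1\,\sigma_2)$ for appropriate tails $\sigma_1,\sigma_2$, and since $\pi_q(0\sigma_1) \leq q^{-1}\cdot\frac{1}{q-1} = \frac{1}{q(q-1)}$ while $\pi_q(1\sigma_2) \geq q^{-1} > \frac{1}{q(q-1)}$ (the last inequality being $q^2 - q - 1 > 0$, i.e. $q > G$, which holds since $q \in (q_k,2)$ and $q_k \geq G$), we get strict inequality. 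This shows $G_1$ lies to the left of $G_2$, and combined with disjointness of the gaps gives exactly the claimed statement about endpoints.

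The main obstacle is the subtlety in part (b) that $\pi_q$ is not globally order-preserving, so one cannot simply invoke ``lexicographically smaller implies smaller projection''; the fix is to exploit that we are comparing the specific endpoints $0\sigma_1$ versus $1\sigma_2$ where the leading-digit gap of $1$ versus $0$ dominates, using $q > G$ to make the estimate $\pi_q(0\sigma_1) < \pi_q(1\sigma_2)$ rigorous, and to lean on the fact that these are honest disjoint gaps of the Cantor set so that ``left endpoint smaller'' upgrades to ``right endpoint of $G_1$ below left endpoint of $G_2$''. A modest amount of care is also needed to confirm that $\delta_1,\delta_2$ both avoiding $(01^k)$ and $(10^k)$ guarantees the words lie in the index set for which Lemma \ref{lemma: containment in U_q}\ref{lemma item: containment in U_q Cantor set} applies, but this is immediate from the hypotheses.
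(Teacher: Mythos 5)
The paper offers no proof of this lemma; it only attributes it to Lemma~4 of \cite{GlenSid2001}. Your part~(a) is correct and self-contained: since prepending $\delta_i$ scales by $q^{-|\delta_i|}$, the gap lengths are $|G_i| = q^{-|\delta_i|}\bigl(\pi_q((10^{k-1})^\infty) - \pi_q((01^{k-1})^\infty)\bigr)$ and the comparison is immediate.

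Part~(b), however, contains a genuine arithmetic error that breaks the argument. You claim $q^{-1} > \frac{1}{q(q-1)}$ is equivalent to $q^2 - q - 1 > 0$, i.e.\ $q > G$. In fact $q^{-1} > \frac{1}{q(q-1)}$ simplifies to $q - 1 > 1$, that is $q > 2$, which is \emph{never} satisfied on $(q_k,2)$. The crude bound $\pi_q(\sigma_1) \le \frac{1}{q-1}$ is therefore not strong enough: with that bound one only gets $\pi_q(0\sigma_1) \le \frac{1}{q(q-1)}$, and this exceeds $q^{-1}$ for every $q \in (1,2)$, so the desired strict inequality $\pi_q(0\sigma_1) < \pi_q(1\sigma_2)$ does not follow. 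The fix requires exploiting the structure of $S_k$ rather than the trivial bound on the projection: since $0\sigma_1$ is a tail of a sequence in $S_k$, the word $\sigma_1$ cannot begin with $1^k$, and from this one can show $\pi_q(\sigma_1) \le \pi_q((1^{k-1}0)^\infty) < 1$ for $q > q_k$ (the last step using Lemma~\ref{lemma: projections decrease with q} and $\pi_{q_k}((1^{k-1}0)^\infty) = 1$). With the sharper bound $\pi_q(\sigma_1) < 1$, one gets $\pi_q(0\sigma_1) < q^{-1} \le \pi_q(1\sigma_2)$ and the rest of your argument (disjointness of the gaps, promoted to strict separation of endpoints because $\pi_q(S_k)$ has no isolated points) goes through. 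But bounding $\pi_q(\sigma_1) < 1$ is precisely the non-trivial lexicographic-order-preserving content that \cite[Lemma~4]{GlenSid2001} supplies, and it is absent from your write-up; as written, the key inequality in part~(b) is simply false.
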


\begin{lemma}
\label{lemma: bound thickness of P_i below}
    For $k \geq 3$ and any $0 \leq i \leq m$, $\tau(P_i(q)) \geq \tau (\pi_q(S_{k-1}))$ and $\tau(Q_m(q)) \geq \tau(\pi_q(S_{k-1}))$.
\end{lemma}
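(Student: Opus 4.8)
The plan has three parts: first reduce everything to $\pi_q(S_{k-1})$ using that thickness is unchanged by increasing affine maps; then identify $P_i(q)$ and $Q_m(q)$ as one‑sided restrictions of $\pi_q(S_{k-1})$ cut off at a gap sitting at the extreme left (respectively right) of the Cantor set; and finally establish a general lemma saying that such an extreme one‑sided restriction cannot decrease thickness. I describe the argument for $P_i(q)$; the one for $Q_m(q)$ is the mirror image.

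For the reduction, I would first note that $g_{q,k}=f_1^{-(k-1)}\circ f_0^{-1}$ is an increasing affine bijection of $\mathbb{R}$, since $f_0^{-1}(x)=x/q$ and $f_1^{-1}(x)=(x+1)/q$ are; hence so are $g_{q,k}^i$ and $x\mapsto x+1$. An increasing affine map scales every gap and every bridge of a compact set by one common positive factor, so it preserves all the ratios in the definition of thickness, and hence the thickness. Thus $\tau(g_{q,k}^i(\pi_q(S_{k-1})+1))=\tau(\pi_q(S_{k-1}))$, and by construction $P_i(q)$ is the image under $x\mapsto g_{q,k}^i(x+1)$ of $\pi_q(S_{k-1})\cap(-\infty,t_i]$, where $t_i$ is the left endpoint of the gap $\Gamma_i:=q^{-(m-i)k}G(q)$ of $\pi_q(S_{k-1})$ (using that $\pi_q(0^\infty)=0$ is the least point of $\pi_q(S_{k-1})$, so $g_{q,k}^i(1)$ is the left endpoint of $\mathrm{conv}(g_{q,k}^i(\pi_q(S_{k-1})+1))$, and Lemma~\ref{lemma: containment in U_q}\ref{lemma item: containment in U_q Cantor set} to know $\Gamma_i$ is a genuine gap). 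The structural point I would extract next --- this is where Lemma~\ref{lemma: properties of gaps} enters --- is that $\Gamma_i$ is the leftmost gap of $\pi_q(S_{k-1})$ among all gaps of size $\ge|\Gamma_i|$: its coding word begins with a long block of $0$s, so by part~(a) of Lemma~\ref{lemma: properties of gaps} every strictly larger gap has a shorter word, and then part~(b) of Lemma~\ref{lemma: properties of gaps} forces every gap whose word is shorter than $\Gamma_i$'s (or the same length but lexicographically smaller) to lie to the right of $\Gamma_i$, hence to the right of $t_i$. Symmetrically, $Q_m(q)$ is the affine image of $\pi_q(S_{k-1})\cap[s_m,\infty)$ with $s_m$ the right endpoint of the gap $H(q)$, which by the mirror reasoning is the rightmost gap of size $\ge|H(q)|$.

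It then remains to prove the clean statement: if $C\subset\mathbb{R}$ is compact and $t$ is the left endpoint of a gap $\Gamma$ of $C$ that is leftmost among all gaps of $C$ of size $\ge|\Gamma|$, then $\tau(C\cap(-\infty,t])\ge\tau(C)$ (and its mirror). I would prove this by running the stepwise gap‑removal constructions of $C$ and of $C':=C\cap(-\infty,t]$ in parallel, with the same tie‑breaking rule (legitimate because thickness is tie‑independent). The bounded gaps of $C'$ are exactly the gaps of $C$ contained in $(-\infty,t)$, all of size $\le|\Gamma|$, and $t$ is a left limit point of $C$, so $C'$ has no isolated point at its right end. For a gap $G$ of $C'$ with $|G|<|\Gamma|$, the two bridges flanking $G$ coincide in the two constructions: the left one because any gap of $C$ left of $G$ automatically lies in $(-\infty,t)$, and the right one because the nearest gap of $C$ of size $\ge|G|$ to the right of $G$ is trapped in $(G,t]$ (it cannot lie beyond $t$, since the larger gap $\Gamma$ already sits at $t$). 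For a gap $G$ of $C'$ with $|G|=|\Gamma|$ --- which must lie to the left of $t$ --- the left bridge again agrees with that in $C$, while the right bridge in $C'$ is $[\text{(right endpoint of }G),t]$, and this interval contains the left bridge of $\Gamma$ in the construction of $C$, hence has length at least $\tau(C)\,|\Gamma|=\tau(C)\,|G|$. In every case the relevant ratio is $\ge\tau(C)$, so $\tau(C')\ge\tau(C)$; applying this to $C=g_{q,k}^i(\pi_q(S_{k-1})+1)$ and its mirror yields both inequalities of the lemma.

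The step I expect to be the main obstacle is the structural claim of the second paragraph: translating the explicit formulas for $G_i(q)$ and $H_m(q)$ into the assertion that $\Gamma_i$ (respectively the relevant gap for $Q_m(q)$) is the leftmost (respectively rightmost) gap of $\pi_q(S_{k-1})$ of size at least its own, which requires a careful reading of the symbolic coding of the gaps of $\pi_q(S_{k-1})$ via Lemma~\ref{lemma: containment in U_q}\ref{lemma item: containment in U_q Cantor set} together with both parts of Lemma~\ref{lemma: properties of gaps}, as well as checking that $t_i$ and $s_m$ are one‑sided limit points of the Cantor set so that the restriction genuinely matches the abstract situation and no degenerate zero‑length bridge ever arises. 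Once that is in place, the parallel‑construction comparison is essentially bookkeeping.
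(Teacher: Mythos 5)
Your proof is correct and takes essentially the same route as the paper's: reduce via affine invariance of thickness, observe from Lemma~\ref{lemma: properties of gaps} that the cut happens at a gap having no gap of comparable or larger size to its left (equivalently, the convex hull of the restricted set is a bridge of $\pi_q(S_{k-1})$), and conclude that the bounded gaps of the restricted set together with their flanking bridges form a subfamily of those of $\pi_q(S_{k-1})$, so the infimum defining thickness can only increase. The one genuine (if small) difference is that the paper first asserts that every $P_i(q)$ is an affine copy of a single model set $P^*(q)=\pi_q(S_{k-1})\cap[0,\pi_q(0^{k-3}(01^{k-1})^\infty)]$ and then analyses only $P^*(q)$ --- an assertion that quietly uses the self-similarity of $\pi_q(S_{k-1})$ to rewrite $\pi_q(S_{k-1})\cap[0,q^{-(m-i)k}L(G(q))]$ as $q^{-(m-i)k}P^*(q)$ --- whereas you apply your general one-sided-restriction lemma directly to the gap $\Gamma_i=q^{-(m-i)k}G(q)$ for each $i$, which avoids invoking that self-similarity; your treatment of the case $|G|=|\Gamma|$ is vacuous under your ``leftmost among gaps of size $\ge|\Gamma|$'' hypothesis and can be dropped, but its presence does no harm.
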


\begin{proof}
    Let $k \geq 3$, $q > q_{k-1}$ and let $P^*(q)$ be the subset of $\pi_q(S_{k-1})$ to the left of the gap given by $G(q)$.
    That is
    $$P^*(q) = \pi_q(S_{k-1}) \cap [0, \pi_q(0^{k-3}(01^{k-1})^\infty)].$$
    We require $k \geq 3$ in order for the gap $G(q)$ to be well-defined, and we require $q > q_{k-1}$ to guarantee that $\pi_q(S_{k-1})$ is a Cantor set by Lemma \ref{lemma: containment in U_q} \ref{lemma item: containment in U_q Cantor set}.
    Thickness is invariant under affine maps, and for each $0 \leq i \leq m$, $P_i(q)$ is the image under an affine map of $P^*(q)$.
    Hence, to prove the first part it is sufficient to show that $\tau(P^*(q)) \geq \tau(\pi_q(S_{k-1}))$.
    The result for $\tau(Q_m(q))$ can be proved in a similar way, and the lemma follows.

    Using Lemma \ref{lemma: properties of gaps}, since $0^{k-3}$ is the lexicographically smallest sequence of length $k-3$, we know that all gaps of $\pi_q(S_{k-1})$ contained in the interval
    $[0, \pi_q(0^{k-3} (01^{k-1})^\infty)]$ must be smaller than $G(q)$.
    Hence $\mathrm{conv}(P^*(q))$ is a bridge of $\pi_q(S_{k-1})$ 
    so all bounded gaps of $\pi_q(S_{k-1})$ are either contained in $\mathrm{conv}(P^*(q))$ or contained in the complement of $\mathrm{conv}(P^*(q))$.
    When we evaluate the thickness of $P^*(q)$, we can observe that the set of bounded gaps over which the infimum is taken is a subset of the set of bounded gaps of $\pi_q(S_{k-1})$.
    Moreover, for any gap $G'$ contained in $\mathrm{conv}(P^*(q))$, the left and right bridges of $G'$ will be the same regardless of whether we consider $G'$ to be a gap of $P^*(q)$ or a gap of $\pi_q(S_{k-1})$.   
    Let $\mathcal{G}$ be the set of bounded gaps of $\pi_q(S_{k-1})$ and let $\mathcal{G}_{P^*}$ be the set of bounded gaps of $P^*(q)$.
    With the above observation in mind, and the fact that $\mathcal{G}_{P^*} \subset \mathcal{G}$, we see that,
    $$\tau(P^*(q)) = \inf \left\{ \min \left\{ \frac{|L_n|}{|G_n|} , \frac{|R_n|}{|G_n|} \right\} : G_n \in \mathcal{G}_{P^*} \right\} \geq \inf \left\{ \min \left\{ \frac{|L_n|}{|G_n|} , \frac{|R_n|}{|G_n|} \right\} : G_n \in \mathcal{G} \right\} = \tau(\pi_q(S_{k-1})).$$
\end{proof}

We will only need to apply Lemma \ref{lemma: bound thickness of P_i below} in the case $k \geq 5$ because this is where the bound $\tau(\pi_q(S_{k-1})) \geq q^{k-4}$ provided by Lemma \ref{lemma: thickness of S_k} applies.


\subsection{The relative structure of the sets $P_i(q)$ and $Q_m(q)$}
\label{subsection: relative structure of sets P_i and Q_m}

Recall that our plan is to apply Theorem \ref{theorem: FandY} to the collection $\{P_0(q) , \ldots , P_m(q) , Q_m(q) \}$ where $B(q)$ is the intersection of their convex hulls.
In order for the convex hulls of the sets $P_0(q) , \ldots , P_m(q) , Q_m(q)$ to overlap in a sufficiently large interval, i.e. for $|B(q)|$ to be sufficiently large, we need to impose stronger conditions on $q$.
This is done in the following lemmas.
Note that the projection of any sequence - besides $0^\infty$ - decreases as $q$ increases.
This is the content of the following lemma.

\begin{lemma}
    \label{lemma: projections decrease with q}
    For any sequence $\seqj{\delta}{1}{\infty} \in \{0,1\}^\mathbb{N} \setminus \{0^\infty \}$, and any $q,q' \in (1,2)$,
    $$q < q' \iff \pi_{q'}(\seqj{\delta}{1}{\infty}) < \pi_q(\seqj{\delta}{1}{\infty}).$$
\end{lemma}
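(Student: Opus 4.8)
The plan is to prove the biconditional by establishing the forward implication and then noting that the reverse follows from a dichotomy argument (or by symmetry of the contrapositive). First I would fix a sequence $\seqj{\delta}{1}{\infty} \in \{0,1\}^\mathbb{N} \setminus \{0^\infty\}$ and consider the function $F(q) = \pi_q(\seqj{\delta}{1}{\infty}) = \sum_{j=1}^\infty \delta_j q^{-j}$ on the interval $(1,2)$. The key observation is that each individual term $q \mapsto \delta_j q^{-j}$ is non-increasing in $q$ (constantly zero if $\delta_j = 0$, and strictly decreasing if $\delta_j = 1$), so $F$ is a sum of non-increasing functions, hence non-increasing. Since $\seqj{\delta}{1}{\infty} \neq 0^\infty$, at least one index $j_0$ has $\delta_{j_0} = 1$, and the term $q \mapsto q^{-j_0}$ is strictly decreasing; therefore $F$ is strictly decreasing on $(1,2)$.

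Having established strict monotonicity, the forward direction $q < q' \implies \pi_{q'}(\seqj{\delta}{1}{\infty}) < \pi_q(\seqj{\delta}{1}{\infty})$ is immediate. For the reverse direction, suppose $\pi_{q'}(\seqj{\delta}{1}{\infty}) < \pi_q(\seqj{\delta}{1}{\infty})$. If $q = q'$ then the two projections are equal, a contradiction; if $q > q'$ then by the forward direction applied with the roles swapped we get $\pi_q(\seqj{\delta}{1}{\infty}) < \pi_{q'}(\seqj{\delta}{1}{\infty})$, again a contradiction. Hence $q < q'$. This trichotomy-style argument closes the biconditional.

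I do not anticipate a genuine obstacle here — the statement is a routine monotonicity fact. The only point requiring minor care is justifying term-by-term differentiation or, to avoid calculus entirely, arguing directly: for $1 < q < q' < 2$ write
\[
\pi_q(\seqj{\delta}{1}{\infty}) - \pi_{q'}(\seqj{\delta}{1}{\infty}) = \sum_{j=1}^\infty \delta_j \left( q^{-j} - (q')^{-j} \right),
\]
and observe that every summand is $\geq 0$ since $q^{-j} > (q')^{-j}$ when $q < q'$, with the summand indexed by $j_0$ (where $\delta_{j_0} = 1$) being strictly positive. The series converges absolutely since $\delta_j \in \{0,1\}$ and $q^{-j} - (q')^{-j} < q^{-j}$, so the sum is strictly positive, giving $\pi_{q'}(\seqj{\delta}{1}{\infty}) < \pi_q(\seqj{\delta}{1}{\infty})$. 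This avoids any appeal to uniform convergence of derivatives and keeps the proof elementary and self-contained.
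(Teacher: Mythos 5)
Your proof is correct and is exactly the routine term-by-term monotonicity argument one would expect; the paper itself states this lemma without proof, treating it as obvious. Your elementary series comparison (avoiding differentiation) is the cleanest version of the argument, and the trichotomy reduction for the reverse implication is standard and airtight, so there is nothing to compare against or to flag.
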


Given some $q \in (1,2)$, $k \geq 2$, define $\epsilon_q \in \mathbb{R}$ by
\begin{equation}
    \label{equation: definition of epsilon_q}
    1 = \pi_q((1^{k-1}0)^\infty) + \epsilon_q.
\end{equation}

We introduce $\epsilon_q$ as a means to bound $|B(q)|$ and hence also $\beta_q$ from below.
Recall from Subsection \ref{subsection: Proof outline main theorem} that a lower bound on $\beta_q$ allows us to prove \eqref{equation: applying FandY to P and Q} holds for some $c \in (0,1)$ and ultimately apply Theorem \ref{theorem: FandY} to prove Theorem \ref{theorem: main theorem}.
We will always be dealing with $q$ in a small neighbourhood of $q_k$ for some fixed $k$ so the implicit dependence of $\epsilon_q$ on $k$ is suppressed in the notation.
Since $1 = \pi_{q_k}((1^{k-1}0)^\infty)$, Lemma \ref{lemma: projections decrease with q} tells us that $\epsilon_q < 0$ if $q < q_k$, $\epsilon_q = 0$ if $q = q_k$ and $\epsilon_q > 0$ if $q > q_k$.
The purpose of the following two lemmas is to find an upper bound on $|\epsilon_q|$ given an upper bound on $|q - q_k|$.

\begin{lemma}
    \label{lemma: expansion of 1 has certain prefix}
    Let $m \in \mathbb{N}$, $k \geq 4$.
    If $|q - q_k| < q_k^{-(m+1)k-3}$ then there is some sequence $\seqj{c}{1}{\infty} \in \{0,1\}^\mathbb{N}$ such that $1 = \pi_q(\seqj{c}{1}{\infty})$ and $\seqj{c}{1}{\infty} = (1^{k-1}0)^m \seqj{c}{mk+1}{\infty}$.
\end{lemma}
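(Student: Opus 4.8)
The plan is to control how fast the greedy base-$q$ expansion of $1$ can deviate from the $k$-bonacci expansion $(1^{k-1}0)^\infty$ as $q$ moves away from $q_k$. Recall that $1 = \pi_{q_k}((1^{k-1}0)^\infty)$, so at $q = q_k$ the expansion of $1$ begins with arbitrarily long prefixes of $(1^{k-1}0)^\infty$. The key quantitative input is that the prefix $(1^{k-1}0)^m$ accounts for the digits in positions $1$ through $mk$, and the ``tail'' $\sum_{j > mk} c_j q^{-j}$ is at most $\frac{q^{-mk}}{q-1} < 2 q^{-mk}$; conversely, if the expansion were to disagree with $(1^{k-1}0)^m$ somewhere in the first $mk$ positions, the resulting difference in $\pi_q$-value would be at least of order $q^{-mk}$ away from what $(1^{k-1}0)^m$ contributes. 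So the strategy is: first write down the greedy expansion $\seqj{c}{1}{\infty}$ of $1$ in base $q$ (which exists and is a genuine base-$q$ expansion since $1 \in I_q$ for $q \in (1,2)$), then show that under the hypothesis $|q - q_k| < q_k^{-(m+1)k - 3}$ it must begin with the block $(1^{k-1}0)^m$.

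First I would fix the notation: let $P_m = (1^{k-1}0)^m$, a word of length $mk$, and note $\pi_q(P_m 0^\infty) = \pi_q((1^{k-1}0)^\infty) - q^{-mk}\pi_q((1^{k-1}0)^\infty) = (1 - q^{-mk})\pi_q((1^{k-1}0)^\infty)$, using Lemma \ref{lemma: relation between maps f and base q expansions} or a direct geometric-series computation. Using $\epsilon_q$ from \eqref{equation: definition of epsilon_q}, we have $\pi_q((1^{k-1}0)^\infty) = 1 - \epsilon_q$, so $\pi_q(P_m 0^\infty) = (1 - q^{-mk})(1 - \epsilon_q) = 1 - q^{-mk} - \epsilon_q + q^{-mk}\epsilon_q$. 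Hence $1 - \pi_q(P_m 0^\infty) = q^{-mk} + \epsilon_q - q^{-mk}\epsilon_q = q^{-mk}(1 - \epsilon_q) + \epsilon_q$.

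The next step is to bound $|\epsilon_q|$. This is exactly where the hypothesis on $|q - q_k|$ enters, and it should follow from the preceding development (in the spirit of Lemma \ref{lemma: projections decrease with q} and a mean-value / derivative estimate on $q \mapsto \pi_q((1^{k-1}0)^\infty)$): the hypothesis $|q - q_k| < q_k^{-(m+1)k - 3}$ is designed precisely to force $|\epsilon_q| < q^{-mk} \cdot (\text{something} < 1)$, in fact small enough that $0 \le 1 - \pi_q(P_m 0^\infty)$ when $q \ge q_k$ and more generally $\pi_q(P_m 0^\infty) \le 1 \le \pi_q(P_m (1^{k-1}0)^\infty) = \pi_q((1^{k-1}0)^\infty) = 1 - \epsilon_q$ -- wait, that last inequality needs $\epsilon_q \le 0$, i.e. $q \le q_k$. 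So I would split into cases $q \le q_k$ and $q > q_k$. In either case the aim is to show $\pi_q(P_m 0^\infty) \le 1$ and $1 - \pi_q(P_m 0^\infty) < $ (the maximal contribution of a tail starting at position $mk+1$, which is $\frac{q^{-mk}}{q-1}$, but more sharply $< q^{-mk}$ would suffice after a little care), together with $\pi_q(P_m 1^\infty) = \pi_q(P_m 0^\infty) + \frac{q^{-mk}}{q-1} \ge 1$. Granting these two bracketing inequalities, $1 \in [\pi_q(P_m 0^\infty), \pi_q(P_m 1^\infty)] = \pi_q(P_m \cdot \{0,1\}^{\mathbb N})$, so there exists a tail $\seqj{c}{mk+1}{\infty}$ with $\pi_q(P_m \seqj{c}{mk+1}{\infty}) = 1$; setting $\seqj{c}{1}{mk} = P_m$ gives the claim.

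The main obstacle I expect is the explicit bound on $|\epsilon_q|$ in terms of $|q - q_k|$ — i.e. verifying that the slightly awkward exponent $q_k^{-(m+1)k-3}$ is exactly what is needed so that the bracketing inequalities $\pi_q(P_m 0^\infty) \le 1 \le \pi_q(P_m 1^\infty)$ both hold with room to spare. This requires estimating the derivative (or a difference quotient) of $q \mapsto \pi_q((1^{k-1}0)^\infty) = \frac{q^{k-1} + q^{k-2} + \cdots + q}{q^k - 1}$ near $q = q_k$, showing it is bounded by roughly $q_k^{3}$ (a constant times a small power of $q_k$), so that $|\epsilon_q| \lesssim q_k^{3}|q - q_k| < q_k^{-(m+1)k}$, which then beats $q^{-mk}(q-1) \asymp q^{-mk}$ comfortably. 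I would do this derivative estimate carefully but present the polynomial algebra tersely. Everything else — the geometric series identities, the case split on the sign of $\epsilon_q$, and the final appeal to the intermediate value property of $\pi_q$ restricted to sequences with prefix $P_m$ — is routine.
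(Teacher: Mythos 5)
Your plan is the same as the paper's: bracket $1$ between $\pi_q((1^{k-1}0)^m 0^\infty)$ and $\pi_q((1^{k-1}0)^m 1^\infty)$ and then appeal to the intermediate-value property of $\pi_q$ restricted to sequences with prefix $(1^{k-1}0)^m$. The paper packages this as showing $q$ lies in $(q_-,q_+)$, which via Lemma \ref{lemma: projections decrease with q} is exactly your two bracketing inequalities; replacing the paper's direct geometric-series algebra by a mean-value estimate on $q \mapsto \pi_q((1^{k-1}0)^\infty)$ is only a cosmetic change.

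Where you go astray is in the heuristic in your final paragraph, which misidentifies the binding constraint. You compare $|\epsilon_q|$ against $q^{-mk}(q-1)\asymp q^{-mk}$, but the two bracketing inequalities are not symmetric. Writing $P_m = (1^{k-1}0)^m$, the point $\pi_q((1^{k-1}0)^\infty) = 1-\epsilon_q$ lies a distance $q^{-mk}\pi_q((1^{k-1}0)^\infty) \asymp q^{-mk}$ from the left endpoint $\pi_q(P_m 0^\infty)$, but only
\[
\pi_q(P_m 1^\infty) - \pi_q((1^{k-1}0)^\infty) = q^{-mk}\pi_q((0^{k-1}1)^\infty) = \frac{q^{-mk}}{q^k-1} \asymp q^{-(m+1)k}
\]
from the right endpoint (using $2-q_k = q_k^{-k}$). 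When $q>q_k$, $\epsilon_q>0$ pushes $1$ towards that right endpoint, so the constraint is $\epsilon_q \lesssim q^{-(m+1)k}$, not $q^{-mk}$; this is precisely the threshold that emerges from the paper's Case 2 and is why the hypothesis carries the exponent $-(m+1)k-3$ rather than $-mk-3$. With that threshold in view, the derivative bound you propose, of order $q_k^3$, is not good enough: it yields $|\epsilon_q| \lesssim q_k^3 \cdot q_k^{-(m+1)k-3} = q_k^{-(m+1)k}$, leaving essentially no slack against a genuine threshold of the same order, so the argument does not close as written. Fortunately the derivative of $q \mapsto \frac{1}{q-1}-\frac{1}{q^k-1}$ near $q_k$ is $-\frac{1}{(q-1)^2}+\frac{kq^{k-1}}{(q^k-1)^2} = O(1)$, not $O(q_k^3)$, so the correct estimate gives $|\epsilon_q| \lesssim q_k^{-(m+1)k-3}$ and the $q_k^{-3}$ supplies the needed room. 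Once you (i) recognise that the $q>q_k$ direction is the tight one and (ii) replace $q_k^3$ by an $O(1)$ derivative bound, your argument does succeed and coincides with the paper's.
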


\begin{proof}
    Let $m \in \mathbb{N}$, $k \geq 4$.
    Let $q_- \in (1,2)$ be such that $1 = \pi_{q_-}((1^{k-1}0)^m 0^\infty)$ and let $q_+ \in (1,2)$ be such that $1 = \pi_{q_+}((1^{k-1}0)^m 1^\infty)$.
    By Lemma \ref{lemma: projections decrease with q}, $q \in (q_- , q_+)$ is equivalent to
    \begin{equation}
    \label{equation: q- q+ inequality}
\pi_q((1^{k-1}0)^m 0^\infty) < \pi_{q_-}((1^{k-1}0)^m 0^\infty) = 1 = \pi_{q_+}((1^{k-1}0)^m 1^\infty) < \pi_q((1^{k-1}0)^m 1^\infty).
    \end{equation}
    This means that, 
    $$1 \in (\pi_q((1^{k-1}0)^m 0^\infty) , \pi_q((1^{k-1}0)^m 1^\infty)),$$ 
    that is, there must exist some sequence $\seqj{c}{1}{\infty} \in \{0,1\}^\mathbb{N}$ such that $1 = \pi_q(\seqj{c}{1}{\infty})$ where $\seqj{c}{1}{\infty} = (1^{k-1}0)^m \seqj{c}{km+1}{\infty}$.
    Therefore, to prove the lemma, it suffices to show that if $|q - q_k| < q_k^{-(m+1)k-3}$ then $q \in (q_- , q_+)$.
    
    If $q = q_k$ then we know that $1 = \pi_q((1^{k-1}0)^\infty)$ which immediately gives $\seqj{c}{1}{\infty} = (1^{k-1}0)^\infty$ and also $q_k \in (q_- , q_+)$.
    Assume that $m \in \mathbb{N}$, $k \geq 4$ and $|q - q_k| < q_k^{-(m+1)k-3}$.
    Since $1 = \pi_{q_k}((1^{k-1}0)^\infty)$, \eqref{equation: q- q+ inequality} becomes
    \begin{equation}
        \label{equation: q_k inequality}
        \pi_q((1^{k-1}0)^m 0^\infty) < \pi_{q_k}((1^{k-1}0)^\infty) < \pi_q((1^{k-1}0)^m 1^\infty),
    \end{equation}
    and so this is also equivalent to $q \in (q_- , q_+)$.
    If $q < q_k$ then by exchanging $\pi_q((1^{k-1}0)^m 1^\infty)$ for the smaller expression $\pi_q((1^{k-1}0)^\infty)$, the second inequality of \eqref{equation: q_k inequality} is implied by
    $$\pi_{q_k}((1^{k-1}0)^\infty) < \pi_q((1^{k-1}0)^\infty),$$
    which is itself a direct consequence of Lemma \ref{lemma: projections decrease with q}.
    If $q > q_k$ then the first inequality of \eqref{equation: q_k inequality} holds by similar reasoning.
    Therefore, it suffices to show
    \begin{enumerate}
        \item if $q < q_k$ then $\pi_q((1^{k-1}0)^m 0^\infty) < \pi_{q_k}((1^{k-1}0)^\infty)$, 
    \end{enumerate}
        and
    \begin{enumerate}
        \item [2.] if $q > q_k$ then $\pi_{q_k}((1^{k-1}0)^\infty) < \pi_q((1^{k-1}0)^m 1^\infty)$.
    \end{enumerate}

    \textbf{Case 1}.
    
    Let $q < q_k$.
    Using the fact that $\pi_q((1^{k-1}0)^m 0^\infty) = (1-q^{-km})\pi_q((1^{k-1}0)^\infty)$, the inequality $\pi_q((1^{k-1}0)^m 0^\infty) < \pi_{q_k}((1^{k-1}0)^\infty)$ becomes
    \begin{align}
    \label{equation: q < q_k intermediate inequality}
     q^{-km} \pi_q((1^{k-1}0)^\infty) 
        &> \pi_q((1^{k-1}0)^\infty) - \pi_{q_k}((1^{k-1}0)^\infty), \nonumber \\
        &= \left( \frac{1}{q-1} - \frac{1}{q^k-1}\right) - \left( \frac{1}{q_k-1} - \frac{1}{q_k^k-1}\right), \nonumber \\
        &= \left( \frac{1}{q-1} - \frac{1}{q_k-1} \right) - \left( \frac{1}{q^k-1} - \frac{1}{q_k^k-1}\right).
    \end{align}
    We show this inequality is a consequence of our assumption that $|q - q_k| < q_k^{-(m+1)k-3}$.
    The second term in brackets in \eqref{equation: q < q_k intermediate inequality} is positive and the first term in brackets equals $\frac{q_k - q}{(q-1)(q_k-1)}$.
    Using Lemma \ref{lemma: projections decrease with q} again, $\pi_q((1^{k-1}0)^\infty) > \pi_{q_k}((1^{k-1}0)^\infty) = 1$ so \eqref{equation: q < q_k intermediate inequality} is implied by
    \begin{equation}
    \label{equation: q- q+ intermediate inequality}
    q^{-km} > \frac{q_k-q}{(q-1)(q_k-1)}.
    \end{equation}
    With $k \geq 4$, we know\footnote{Since $q_4 \approx 1.9276$ (by direct computation), the smallest possible value of $q$ is $q_4 - q_4^{-11}$, so this is a very loose lower bound.} $q > 15/8$ so $(q-1)(q_k-1) > 49/64 > q^{-1}$ and hence \eqref{equation: q- q+ intermediate inequality} is implied by
    $$q_k - q < q_k^{-km-1},$$
    which is definitely true if $q_k - q < q_k^{-(m+1)k-3}$.
    This completes the first case.

    \textbf{Case 2}

    Let $q > q_k$.
    We have the following chain of equivalences from the inequality we set out to prove:
    \begin{align}
    \label{equation: q > q_k intermediate inequality}
        & & \pi_{q_k}((1^{k-1}0)^\infty) & <  \pi_q((1^{k-1}0)^m 1^\infty), \nonumber \\
        & \Leftrightarrow & \pi_{q_k}((1^{k-1}0)^\infty) - \pi_q((1^{k-1}0)^\infty) & < q^{-km}\pi_q((0^{k-1}1)^\infty), \nonumber \\
        & \Leftrightarrow & \left( \frac{1}{q_k-1} - \frac{1}{q-1}\right) - \left( \frac{1}{q_k^k - 1} - \frac{1}{q^k-1} \right) & < q^{-km} \frac{1}{q^k-1}.    
    \end{align}
    The first equivalence makes use of the following equalities
    $$\pi_q((1^{k-1}0)^m 1^\infty) = \pi_q((1^{k-1}0)^\infty) + \pi_q(0^{km} (0^{k-1}1)^\infty) = \pi_q((1^{k-1}0)^\infty) + q^{-km}\pi_q((0^{k-1}1)^\infty),$$
    while the second is simply rewriting the $\pi_q$ expressions using the standard formulas for geometric series, and rearranging them.
    As in the previous case, the second term in brackets on the left hand side of \eqref{equation: q > q_k intermediate inequality} is positive, which means this inequality is implied by
    $$\frac{q-q_k}{(q_k-1)(q-1)} < \frac{q^{-km}}{q^k-1},$$
    which we rearrange to
    $$q-q_k < q^{-km}\frac{(q_k-1)(q-1)}{q^k-1}.$$
    Again, we aim to show this is a consequence of our assumptions on $q$.
    Using $k \geq 4$, it can be checked that $q^{-km} > q_k^{-km-1}$ and $\frac{(q_k-1)(q-1)}{q^k-1} > q_k^{-k-2}$, so the above inequality is implied by
    $$ q-q_k < q^{-(m+1)k-3},$$
    which is obviously a restriction of $|q - q_k| < q^{-(m+1)k-3}$ to the case $q > q_k$.
    Therefore the proof of the second case is complete.
    Hence we have proved the conclusion in both cases and the lemma holds.
\end{proof}

Recall the definition of $\epsilon_q$ from \eqref{equation: definition of epsilon_q} above.

\begin{lemma}
\label{lemma: bounding epsilon in terms of q}
    Let $m \in \mathbb{N}$, $k \geq 4$
    and $|q - q_k| < q_k^{-(m+2)k-3}$. 
    Then $-q_k^{-(m+1)k+1} < \epsilon_q < q_k^{-(m+2)k + 1}$.
\end{lemma}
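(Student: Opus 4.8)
The plan is to leverage Lemma \ref{lemma: expansion of 1 has certain prefix}: since $|q-q_k| < q_k^{-(m+2)k-3} \leq q_k^{-((m+1)+1)k-3}$, the hypotheses of that lemma are satisfied with $m$ replaced by $m+1$, so there is a sequence $\seqj{c}{1}{\infty} \in \{0,1\}^\mathbb{N}$ with $1 = \pi_q(\seqj{c}{1}{\infty})$ and $\seqj{c}{1}{\infty} = (1^{k-1}0)^{m+1}\seqj{c}{(m+1)k+1}{\infty}$. From the definition \eqref{equation: definition of epsilon_q}, $\epsilon_q = 1 - \pi_q((1^{k-1}0)^\infty) = \pi_q(\seqj{c}{1}{\infty}) - \pi_q((1^{k-1}0)^\infty)$. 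I would split into the three cases $q = q_k$ (where $\epsilon_q = 0$ and the conclusion is trivial), $q < q_k$ (where $\epsilon_q < 0$), and $q > q_k$ (where $\epsilon_q > 0$), exactly paralleling the structure of the previous lemma's proof.

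For the upper bound (case $q > q_k$): here $\pi_q(\seqj{c}{1}{\infty})$ agrees with $(1^{k-1}0)^\infty$ on the first $(m+1)k$ digits, so the tails differ only beyond position $(m+1)k$, giving $0 < \epsilon_q = \pi_q(\seqj{c}{1}{\infty}) - \pi_q((1^{k-1}0)^\infty) \leq q^{-(m+1)k}\pi_q(1^\infty) = q^{-(m+1)k}\cdot\frac{1}{q-1}$. I want to bound this by $q_k^{-(m+2)k+1}$. Since $q > q_k$ gives $q^{-(m+1)k} < q_k^{-(m+1)k}$, and using $k \geq 4$ so $q > 15/8$ hence $\frac{1}{q-1}$ is bounded by a small constant, this crude bound $q^{-(m+1)k}\cdot\frac{1}{q-1}$ is roughly $q_k^{-(m+1)k}$, which is far larger than $q_k^{-(m+2)k+1}$ — so a cruder estimate will not suffice and I need to exploit more. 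The better approach: since $1 = \pi_q((1^{k-1}0)^\infty) + \epsilon_q$ and also $1 = \pi_{q_k}((1^{k-1}0)^\infty)$, write $\epsilon_q = \pi_{q_k}((1^{k-1}0)^\infty) - \pi_q((1^{k-1}0)^\infty)$, expand both via geometric series to get $\epsilon_q = \bigl(\frac{1}{q_k-1} - \frac{1}{q-1}\bigr) - \bigl(\frac{1}{q_k^k-1} - \frac{1}{q^k-1}\bigr)$, and bound each difference by a mean-value / factoring estimate in terms of $q - q_k$. This gives $|\epsilon_q|$ on the order of $|q - q_k|$ times a bounded constant, so that $|q-q_k| < q_k^{-(m+2)k-3}$ translates into $\epsilon_q < q_k^{-(m+2)k+1}$ with room to spare (the factor $q_k^{3}$ absorbing the bounded constant since $q_k > 15/8 > \sqrt[3]{2}$... actually $q_k^3 > 8$ suffices to dominate the constant). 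For $q < q_k$ the symmetric argument bounds $-\epsilon_q = \pi_q((1^{k-1}0)^\infty) - \pi_{q_k}((1^{k-1}0)^\infty)$ the same way, yielding $-\epsilon_q < q_k^{-(m+1)k+1}$; note the weaker exponent $(m+1)k$ rather than $(m+2)k$ is permitted on this side, giving even more slack.

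The main obstacle is getting the constants right in the linearization step: I must show that $\bigl(\frac{1}{q_k-1} - \frac{1}{q-1}\bigr) - \bigl(\frac{1}{q_k^k-1} - \frac{1}{q^k-1}\bigr)$ is bounded in absolute value by something like $C|q-q_k|$ with an \emph{explicit} constant $C$, and then check that $C \cdot q_k^{-(m+2)k-3} \leq q_k^{-(m+2)k+1}$, i.e. $C \leq q_k^4$. The first bracket is $\frac{q-q_k}{(q-1)(q_k-1)}$, which is at most $\frac{|q-q_k|}{(49/64)}$ using $q, q_k > 15/8$; the second bracket, after factoring $q^k - q_k^k = (q-q_k)(q^{k-1}+\cdots+q_k^{k-1})$, is comparable to $\frac{k\,|q-q_k|}{(q^k-1)(q_k^k-1)}$ which is tiny and does not affect the leading behaviour (and has the favourable sign in each case, so it only helps). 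So effectively $C$ is a small absolute constant well below $q_k^4 > 12$, and the inequality closes. I would present this as: reduce to the two one-sided inequalities, do the geometric-series rewrite, bound the first bracket crudely, observe the second bracket has the sign that makes the inequality easier, and finish with the numeric check using $k\geq 4 \Rightarrow q_k > 15/8$.
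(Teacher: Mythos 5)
Your proposal is correct, but your final argument takes a genuinely different route from the paper's. The paper invokes Lemma \ref{lemma: expansion of 1 has certain prefix} (with $m$ replaced by $m+1$) to sandwich $1 = \pi_q((1^{k-1}0)^\infty) + \epsilon_q$ between $\pi_q((1^{k-1}0)^{m+1}0^\infty)$ and $\pi_q((1^{k-1}0)^{m+1}1^\infty)$, obtaining $-\tfrac{q^{-k(m+1)}}{1-q^{-k(m+1)}} \leq \epsilon_q \leq \tfrac{q^{-k(m+1)}}{q^k-1}$; the key is that the upper endpoint $\pi_q((1^{k-1}0)^{m+1}1^\infty)$ exceeds $\pi_q((1^{k-1}0)^\infty)$ by only $q^{-(m+1)k}\pi_q((0^{k-1}1)^\infty) = q^{-(m+1)k}/(q^k-1)$, not by $q^{-(m+1)k}\pi_q(1^\infty)$. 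Your diagnosis is right that the cruder sandwich with $\pi_q(1^\infty)$ is too lossy (it loses the factor $(q^k-1)/(q-1) \sim q^{k-1}$), but note the paper's sandwich is \emph{not} the crude one: it does suffice without pivoting. Your pivot to the identity $\epsilon_q = \pi_{q_k}((1^{k-1}0)^\infty) - \pi_q((1^{k-1}0)^\infty)$ and direct linearization $|\epsilon_q| \leq \tfrac{|q-q_k|}{(q_k-1)(q-1)}$ (after discarding the helpfully-signed $(q_k^k-1)^{-1}-(q^k-1)^{-1}$ term) works cleanly: with $(q_k-1)(q-1) > 49/64$ and $|q-q_k| < q_k^{-(m+2)k-3}$ it gives $|\epsilon_q| < \tfrac{64}{49}q_k^{-(m+2)k-3} < q_k^{-(m+2)k+1}$ since $q_k^4 > 12$, covering the upper bound; the lower bound follows \emph{a fortiori} since $q_k^{-(m+2)k+1} < q_k^{-(m+1)k+1}$. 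Your approach is self-contained (it does not need Lemma \ref{lemma: expansion of 1 has certain prefix} at all, only the definition of $q_k$ and Lemma \ref{lemma: projections decrease with q} for the sign of $\epsilon_q$) and actually yields a strictly stronger lower bound than stated; the paper's route reuses the prefix machinery it has already built and keeps the asymmetric two-sided bound that is exactly what is consumed downstream, at the cost of threading the extra dependence on Lemma \ref{lemma: expansion of 1 has certain prefix}.
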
 

\begin{proof}
    Assuming the hypotheses of the lemma, let $\seqj{c}{1}{\infty} \in \{0,1\}^\mathbb{N}$ be of the form $(1^{k-1}0)^{m+1}\seqj{c}{k(m+1)+1}{\infty}$ where $\pi_q(\seqj{c}{1}{\infty}) = 1$, which we know exists
    by Lemma \ref{lemma: expansion of 1 has certain prefix}.
    Then,
    \begin{equation}
    \label{equation: epsilon inequality}
    \pi_q((1^{k-1}0)^{m+1} 0^\infty) \leq \pi_q((1^{k-1}0)^\infty) + \epsilon_q \leq \pi_q((1^{k-1}0)^{m+1} 1^\infty).
    \end{equation}
    If $q = q_k$ then $\epsilon_q = 0$ so we consider the remaining two cases separately.

    \textbf{Case 1: $q < q_k$.}
    If $q < q_k$ then $\epsilon_q < 0$ and the first part of \eqref{equation: epsilon inequality} is equivalent to
    $$\epsilon_q    \geq (1-q^{-k(m+1)})\pi_q((1^{k-1}0)^\infty) - \pi_q((1^{k-1}0)^\infty) = -q^{-k(m+1)}(1-\epsilon_q),$$
    so 
    $$\epsilon_q \geq -\frac{q^{-k(m+1)}}{1-q^{-k(m+1)}}.$$
    From here, using $k \geq 4$, it can be checked that $\epsilon_q > -q_k^{-k(m+1)+1}$.
    This is intuitive given the factor $\frac{1}{1-q^{-km}}$ is very close to $1$ and $q$ is very close to $q_k$.
    
    \textbf{Case 2: $q > q_k$.}
    If $q > q_k$ then $\epsilon_q > 0$ and the second part of \eqref{equation: epsilon inequality} is equivalent to
    $$\pi_q((1^{k-1}0)^\infty) + \epsilon_q \leq \pi_q((1^{k-1}0)^{m+1} 1^\infty),$$
    which is equivalent to
    $$\epsilon_q \leq \frac{q^{-k(m+1)}}{q^k-1}.$$
    It can be checked that this implies that
    $$\epsilon_q < q_k^{-(m+2)k + 1}.$$
    
    Combining the two cases, we see that $-q_k^{-(m+1)k+1} < \epsilon_q < q_k^{-(m+2)k + 1}$.
\end{proof}

For $q \in (1,2)$ and $k \geq 2$, the map $g_{q,k} = f_1^{-(k-1)} \circ f_0^{-1}$ has a natural symbolic interpretation.
Let $\seqj{\delta}{1}{\infty} \in \{0,1\}^\mathbb{N}$ be a base $q$ expansion for some point $y \in I_q$, then an application of Lemma \ref{lemma: relation between maps f and base q expansions} is that
\begin{equation}
    \label{equation: g_qk prefixes by a sequence}
    g_{q,k}(\pi_q(\seqj{\delta}{1}{\infty})) = \pi_q((1^{k-1}0)\seqj{\delta}{1}{\infty}).
\end{equation}
That is, the point $g_{q,k}(y)$ has a base $q$ expansion given by $(1^{k-1}0)\seqj{\delta}{1}{\infty}$ where $\seqj{\delta}{1}{\infty}$ is a base $q$ expansion of $y$.
In other words, $g_{q,k}$ prefixes any base $q$ expansion of $y$ by $(1^{k-1}0)$.
By inspection $g_{q,k}$ is an affine transformation whose linear part is a contraction by $q^k$ and whose translation is a shift of $\pi_q(1^{k-1}0^\infty)$.
It is straightforward to check that the fixed point of $g_{q,k}$ is $\pi_q((1^{k-1}0)^\infty)$.
For any $k \geq 2$ and any $x$ such that $\pi_q((1^{k-1}0)^\infty) + x \in I_q$,
\begin{equation}
    \label{equation: affine transformation fixed point}
    g_{q,k}(\pi_q((1^{k-1}0)^\infty) + x) = \pi_q((1^{k-1}0)^\infty) + q^{-k}x.
\end{equation}
We will also use the fact that since $g_{q,k}$ is an affine transformation, 
\begin{equation}
    \label{equation: affine transformation translation}
    g_{q,k}(x) - g_{q,k}(z) = q^{-k}(x-z)
\end{equation}
whenever $x,z \in I_q$.

The following lemmas allows us to determine the interval $B(q)$ as $q$ varies relative to $q_k$.
The proof involves useful bounds on the diameters of the $P_i(q)$ and $Q_m(q)$ sets which are needed in order to bound $\beta_q$ below in Subsection \ref{subsection: bounding beta below}.
We denote by $L(X)$ and $R(X)$ the left and right endpoints of a compact subset $X \subset \mathbb{R}$.
Figure \ref{figure: relative structure of P and Q sets} provides a useful reference for Lemma \ref{lemma: layout of endpoint of P and Q}.

\begin{lemma}
    \label{lemma: P and Q sets have a common diameter}
    For any $m \in \mathbb{N}$, $k \geq 3$ and $q > q_{k-1}$, 
        $$|P_0(q)| = \cdots = |P_m(q)| = |Q_m(q)| = q^{-(m+1)k + 2} \pi_q((1^{k-1}0)^\infty).$$
\end{lemma}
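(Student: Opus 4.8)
The plan is to compute all these diameters directly from the symbolic description of the sets and the scaling behaviour of $g_{q,k}$. First I would identify the convex hull of $\pi_q(S_{k-1})$: its left endpoint is $\pi_q(0^\infty) = 0$ and its right endpoint is $\pi_q(1^\infty) = \frac{1}{q-1}$, but more relevant is the structure of the two distinguished pieces we have carved out. For $Q_m(q)$, which is the portion of $g_{q,k}^m(\pi_q(S_{k-1}))$ lying to the right of the gap $H_m(q) = g_{q,k}^m(H(q))$, the left endpoint is $g_{q,k}^m(\pi_q(1^{k-3}(10^{k-1})^\infty))$ and the right endpoint is $g_{q,k}^m(\pi_q(1^\infty))$. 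Since $g_{q,k}$ is affine with linear part a contraction by $q^{-k}$ (by \eqref{equation: affine transformation translation}, applied $m$ times), we get
$$|Q_m(q)| = q^{-mk}\bigl(\pi_q(1^\infty) - \pi_q(1^{k-3}(10^{k-1})^\infty)\bigr) = q^{-mk}\,\pi_q(0^{k-3}0(0^{k-1}1)^\infty) = q^{-mk}\cdot q^{-(k-2)}\pi_q((0^{k-1}1)^\infty).$$
Then I would simplify $\pi_q((0^{k-1}1)^\infty) = q^{-(k-1)}\pi_q((1 0^{k-1})^\infty) = q^{-(k-1)}\pi_q((1^{k-1}0)^\infty)$ — wait, more carefully, $\pi_q((0^{k-1}1)^\infty)$ equals $q^{-k}/(1-q^{-k}) \cdot$ (something), so I would just express everything in terms of $\pi_q((1^{k-1}0)^\infty) = \frac{1-q^{-(k-1)}\cdot\text{(adjustment)}}{1-q^{-k}}$ using the geometric series formula, checking that the powers of $q$ collapse to give exactly $q^{-(m+1)k+2}\pi_q((1^{k-1}0)^\infty)$.

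Next I would handle $P_i(q)$ for $0 \le i \le m$. By construction $P_i(q)$ is the portion of $g_{q,k}^i(\pi_q(S_{k-1})+1)$ to the left of the gap $G_i(q) = g_{q,k}^i(q^{-(m-i)k}G(q)+1)$. Its left endpoint is $g_{q,k}^i(1) = g_{q,k}^i(\pi_q(0^\infty)+1)$ and its right endpoint is $g_{q,k}^i(1 + q^{-(m-i)k}\pi_q(0^{k-3}(01^{k-1})^\infty))$. Since the translation by $1$ and the factor $q^{-(m-i)k}$ are both affine, and $g_{q,k}^i$ contracts lengths by $q^{-ik}$, I get
$$|P_i(q)| = q^{-ik}\cdot q^{-(m-i)k}\cdot\bigl(\pi_q(0^{k-3}(01^{k-1})^\infty) - \pi_q(0^\infty)\bigr) = q^{-mk}\,\pi_q(0^{k-3}(01^{k-1})^\infty).$$
The exponent $-ik$ cancels against $-(m-i)k$ up to the overall $-mk$, which is exactly why all the $P_i(q)$ share a common diameter independent of $i$. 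Then $\pi_q(0^{k-3}(01^{k-1})^\infty) = q^{-(k-3)}\pi_q((01^{k-1})^\infty) = q^{-(k-3)}q^{-1}\pi_q((1^{k-1}0)^\infty) = q^{-(k-2)}\pi_q((1^{k-1}0)^\infty)$, giving $|P_i(q)| = q^{-mk - (k-2)}\pi_q((1^{k-1}0)^\infty) = q^{-(m+1)k+2}\pi_q((1^{k-1}0)^\infty)$, matching $|Q_m(q)|$.

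The only genuinely fiddly point — and the one I'd expect to be the main obstacle — is bookkeeping the symbolic identities $\pi_q(0^a\sigma) = q^{-a}\pi_q(\sigma)$ together with the cyclic-shift relations like $\pi_q((01^{k-1})^\infty) = q^{-1}\pi_q((1^{k-1}0)^\infty)$ and $\pi_q((0^{k-1}1)^\infty) = q^{-(k-1)}\pi_q((10^{k-1})^\infty)$, and then confirming $\pi_q((10^{k-1})^\infty)$ and $\pi_q((1^{k-1}0)^\infty)$ relate correctly via the geometric-series formula $\pi_q((w)^\infty) = \frac{\pi_q(w\,0^\infty)\cdot q^{|w|}}{q^{|w|}-1}$. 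These are all routine but must line up exactly so that the $P_i$ computation and the $Q_m$ computation produce the identical closed form; I would verify the one nontrivial identity that $\pi_q(0^{k-3}(01^{k-1})^\infty)$ and $q^{-(k-2)}\pi_q((0^{k-1}1)^\infty)$ agree (both equal $q^{-(k-2)}\pi_q((1^{k-1}0)^\infty)$ after using cyclic shifts) so that the $P$ and $Q$ answers coincide, and then the lemma follows.
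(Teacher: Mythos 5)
Your overall strategy is exactly the paper's: read off the two endpoints of each $P_i(q)$ and of $Q_m(q)$ from their symbolic description, exploit that $g_{q,k}$ is affine with linear part $q^{-k}$, and then collapse powers of $q$. The $P_i(q)$ computation is correct, including the cyclic-shift step $\pi_q((01^{k-1})^\infty) = q^{-1}\pi_q((1^{k-1}0)^\infty)$, and it reproduces the paper's argument essentially verbatim.

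The $Q_m(q)$ computation, however, contains a concrete combinatorial error, and the ``nontrivial identity'' you invoke to reconcile it is false. The digit-wise difference
$$1^\infty - 1^{k-3}(10^{k-1})^\infty = 0^{k-2}\,(1^{k-1}0)^\infty,$$
not $0^{k-3}0\,(0^{k-1}1)^\infty$: when you complement the periodic block $(10^{k-1})$ you get $(01^{k-1})$, whereas you wrote $(0^{k-1}1)$, i.e.\ you reversed the block. The two are not cyclic shifts of each other and project to genuinely different values (for instance $\pi_q((10^{k-1})^\infty)=\frac{q^{-1}}{1-q^{-k}}$ while $\pi_q((1^{k-1}0)^\infty)=\frac{q^{-1}-q^{-k}}{(1-q^{-k})(1-q^{-1})}$). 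Consequently the claimed identity $\pi_q(0^{k-3}(01^{k-1})^\infty) = q^{-(k-2)}\pi_q((0^{k-1}1)^\infty)$ does not hold, and your two closed forms for $|P_i(q)|$ and $|Q_m(q)|$ would not actually agree as written. The repair is immediate: with the correct difference sequence you get $|Q_m(q)| = q^{-mk}\,\pi_q(0^{k-2}(1^{k-1}0)^\infty) = q^{-mk-(k-2)}\pi_q((1^{k-1}0)^\infty) = q^{-(m+1)k+2}\pi_q((1^{k-1}0)^\infty)$, matching $|P_i(q)|$ with no further identities needed. (Indeed $0^{k-2}(1^{k-1}0)^\infty$ and the paper's $0^{k-3}(01^{k-1})^\infty$ are literally the same sequence, so the two diameter computations land on identical expressions by design.) Apart from this one slip the proposal is on target.
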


\begin{proof}    
    Recall that for $k \geq 3$, $q > q_{k-1}$
    $$G_i(q)     = g_{q,k}^i \left( q^{-(m-i)k}G(q) + 1 \right),$$
    where
    $$G(q) = (\pi_q(0^{k-3} (01^{k-1})^\infty) , \pi_q(0^{k-3} (10^{k-1})^\infty)),$$
    and that $P_i(q)$ is the subset of $g_{q,k}^i(\pi_q(S_{k-1}) + 1)$ to the left of $G_i(q)$.
    So $P_i(q)$ has the same left endpoint as $g_{q,k}^i(\pi_q(S_{k-1}) + 1)$, namely $g_{q,k}^i(\pi_q(0^\infty) + 1) = g_{q,k}^i(1)$.
    Observe that, using \eqref{equation: affine transformation translation}, for any $0 \leq i \leq m$,
    \begin{align}
        |P_i(q)|    &= L(G_i(q)) - L(P_i(q)), \nonumber \\
                    &= g_{q,k}^i(\pi_q(0^{(m-i)k} 0^{k-3} (01^{k-1})^\infty) + 1) - g_{q,k}^i(1), \nonumber \\
                    &= q^{-km}(\pi_q(0^{k-3} (01^{k-1})^\infty)), \nonumber \\
                    &= q^{-(m+1)k + 2} \pi_q((1^{k-1}0)^\infty), \nonumber 
        \end{align}    
    which we note is independent of $i$.
    Recall also that
    $$H_m(q)     = g_{q,k}^m \left( H(q) \right),$$
    where 
    $$H(q) = (\pi_q(1^{k-3}(01^{k-1})^\infty) , \pi_q(1^{k-3}(10^{k-1})^\infty)),$$
    and that $Q_m(q)$ is the subset of $g_{q,k}^m(\pi_q(S_{k-1}))$ to the right of the gap
    $H_m(q)$.
    Using \eqref{equation: affine transformation translation} again, we see that
        \begin{align*}
            |Q_m(q)|   &= R(Q_m(q)) - R(H_m(q)),\\
                    &= g_{q,k}^m(\pi_q(1^\infty)) - g_{q,k}^m(\pi_q(1^{k-3} (10^{k-1})^\infty)),\\
                    &= q^{-km}(\pi_q(0^{k-3} (01^{k-1})^\infty)),\\
                    &= |P_i(q)|,
        \end{align*}    
    for all $0 \leq i \leq m$.
\end{proof}

By Lemma \ref{lemma: P and Q sets have a common diameter}, we know that $|P_0(q)| = \cdots = |P_m(q)| = |Q_m(q)|$.
We denote this value by $D_q$.

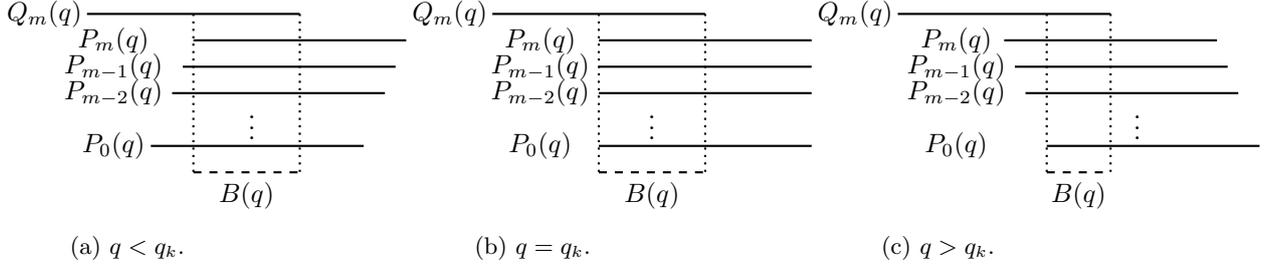
\begin{figure}[H]
            \begin{subfigure}[t]{0.2\textwidth}
        \vskip 0pt
     \begin{tikzpicture}[scale = 0.7]
        \begin{scope}[thick]
            \draw (-2,0.5) -- (2,0.5);
            \draw (0,0) -- (4,0);
            \draw (-0.2,-0.5) -- (3.8,-0.5);
            \draw (-0.4,-1) -- (3.6,-1);
            \node at (1.1,-1.5) {$\vdots$};
            \draw (-0.8,-2) -- (3.2,-2);

            \draw[dotted] (0,0.5) -- (0,-2.5);
            \draw[dotted] (2,0.5) -- (2,-2.5);
            \draw[dashed] (0,-2.5) -- (2,-2.5);

            \node at (-2.8,0.5) {$Q_m(q)$};
            \node at (-1.5,0) {$P_m(q)$};
            \node at (-1.5,-0.5) {$P_{m-1}(q)$};
            \node at (-1.5, -1) {$P_{m-2}(q)$};
            \node at (-1.5,-2) {$P_0(q)$};

            \node[below] at (1,-2.5) {$B(q)$};
        \end{scope}
    \end{tikzpicture}
    \caption{$q < q_k$.}
    \label{figure: structure of intersecting sets (-ve delta)}
        \end{subfigure}
        \hspace{17mm}
        \begin{subfigure}[t]{0.2\textwidth}
        \vskip 0pt
\begin{tikzpicture}[scale = 0.7]
        \begin{scope}[thick]
            \draw (-2,0.5) -- (2,0.5);
            \draw (0,0) -- (4,0);
            \draw (0,-0.5) -- (4,-0.5);
            \draw (0,-1) -- (4,-1);
            \node at (1,-1.5) {$\vdots$};
            \draw (0,-2) -- (4,-2);

            \draw[dotted] (0,0.5) -- (0,-2.5);
            \draw[dotted] (2,0.5) -- (2,-2.5);
            \draw[dashed] (0,-2.5) -- (2,-2.5);

            \node at (-2.8,0.5) {$Q_m(q)$};
            \node at (-1.1,0) {$P_m(q)$};
            \node at (-1.1,-0.5) {$P_{m-1}(q)$};
            \node at (-1.1, -1) {$P_{m-2}(q)$};
            \node at (-1.1,-2) {$P_0(q)$};

            \node[below] at (1,-2.5) {$B(q)$};
        \end{scope}
    \end{tikzpicture}
    \caption{$q = q_k$.}
    \label{figure: structure of intersecting sets (+ve delta)}
        \end{subfigure}
        \hspace{17mm}
 \begin{subfigure}[t]{0.2\textwidth}
        \vskip 0pt
\begin{tikzpicture}[scale = 0.7]
        \begin{scope}[thick]
            \draw (-2,0.5) -- (2,0.5);
            \draw (0,0) -- (4,0);
            \draw (0.2,-0.5) -- (4.2,-0.5);
            \draw (0.4,-1) -- (4.4,-1);
            \node at (2.5,-1.5) {$\vdots$};
            \draw (0.8,-2) -- (4.8,-2);

            \draw[dotted] (0.8,0.5) -- (0.8,-2.5);
            \draw[dotted] (2,0.5) -- (2,-2.5);
            \draw[dashed] (0.8,-2.5) -- (2,-2.5);

            \node at (-2.8,0.5) {$Q_m(q)$};
            \node at (-0.9,0) {$P_m(q)$};
            \node at (-0.9,-0.5) {$P_{m-1}(q)$};
            \node at (-0.9, -1) {$P_{m-2}(q)$};
            \node at (-0.9,-2) {$P_0(q)$};

            \node[below] at (1.4,-2.5) {$B(q)$};
        \end{scope}
    \end{tikzpicture}
    \caption{$q > q_k$.}
    \label{figure: structure of intersecting sets (+ve delta)}
        \end{subfigure}
        \caption{The relative structure of the convex hulls of the sets $P_0(q) , \ldots P_m(q) , Q_m(q)$ and $B(q)$ when $|q - q_k| < q_k^{-(m+2)k-3}$, as is proved in Lemma \ref{lemma: layout of endpoint of P and Q}.}
        \label{figure: relative structure of P and Q sets}
\end{figure}

Recall that $L(X)$ and $R(X)$ denote the left and right endpoints respectively of a compact set $X \subset \mathbb{R}$.

\begin{lemma}
    \label{lemma: layout of endpoint of P and Q}
    Let $m \in \mathbb{N}$, $k \geq 4$ and let $|q - q_k| < q_k^{-(m+2)k-3}$.
    \begin{enumerate}[(a)] 
        \item 
        \label{item: layout of endpoints q < q_k}
        If $q < q_k$ then
        $$L(Q_m(q)) < L(P_0(q)) < \cdots < L(P_m(q)) < R(Q_m(q)) < R(P_0(q)) < \cdots < R(P_m(q)).$$
         \item 
         \label{item: layout of endpoints q = q_k}
         If $q = q_k$ then
        $$L(Q_m(q)) < L(P_0(q)) = \cdots = L(P_m(q)) < R(Q_m(q)) < R(P_0(q)) = \cdots = R(P_m(q)).$$
        \item
        \label{item: layout of endpoints q > q_k}
        If $q > q_k$ then 
        $$L(Q_m(q)) < L(P_m(q)) < \cdots < L(P_0(q)) < R(Q_m(q)) < R(P_m(q)) < \cdots < R(P_0(q)).$$        
    \end{enumerate}
\end{lemma}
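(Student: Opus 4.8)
The plan is to convert the statement into four explicit formulas for the endpoints of the sets and then do the bookkeeping. First I would record that, since $0^\infty$ and $1^\infty$ both lie in $S_{k-1}$, we have $0 = \min \pi_q(S_{k-1})$ and $\pi_q(1^\infty) = \max \pi_q(S_{k-1})$; as $g_{q,k}$ is an increasing affine map, a routine inspection of the definitions of $P_i(q)$ and $Q_m(q)$ gives $L(P_i(q)) = g_{q,k}^i(1)$ for every $0 \le i \le m$ and $R(Q_m(q)) = g_{q,k}^m(\pi_q(1^\infty))$. By Lemma \ref{lemma: P and Q sets have a common diameter} all of $P_0(q), \dots, P_m(q), Q_m(q)$ share the common diameter $D_q = q^{-(m+1)k+2}\pi_q((1^{k-1}0)^\infty)$, so $R(P_i(q)) = L(P_i(q)) + D_q$ and $L(Q_m(q)) = R(Q_m(q)) - D_q$. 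Thus everything reduces to understanding $g_{q,k}^i(1)$ and $g_{q,k}^m(\pi_q(1^\infty))$.

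Next I would compute these using the fixed-point description \eqref{equation: affine transformation fixed point} of $g_{q,k}$. Writing $p = \pi_q((1^{k-1}0)^\infty)$, the defining relation \eqref{equation: definition of epsilon_q} reads $1 = p + \epsilon_q$, so iterating \eqref{equation: affine transformation fixed point} gives
$$L(P_i(q)) = g_{q,k}^i(p + \epsilon_q) = p + q^{-ik}\epsilon_q = 1 - \epsilon_q\bigl(1 - q^{-ik}\bigr).$$
A short geometric-series computation gives $\pi_q(1^\infty) - p = \frac{1}{q^k - 1}$, hence $R(Q_m(q)) = p + q^{-mk}\frac{1}{q^k-1} = (1 - \epsilon_q) + \frac{q^{-mk}}{q^k-1}$, while $D_q = q^{-(m+1)k+2}(1 - \epsilon_q)$. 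Every endpoint is now an explicit function of $\epsilon_q, q, k, m$, and by Lemma \ref{lemma: projections decrease with q} applied to $(1^{k-1}0)^\infty$ (using $1 = \pi_{q_k}((1^{k-1}0)^\infty)$), the sign of $\epsilon_q$ equals the sign of $q - q_k$.

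With these formulas, the mutual ordering of $L(P_0(q)), \dots, L(P_m(q))$ — and therefore of $R(P_0(q)), \dots, R(P_m(q))$, which differ by the constant $D_q$ — is immediate: $i \mapsto 1 - q^{-ik}$ is strictly increasing, so $i \mapsto L(P_i(q))$ is strictly decreasing when $\epsilon_q > 0$, constant when $\epsilon_q = 0$, and strictly increasing when $\epsilon_q < 0$; this produces the three internal chains in (a), (b), (c). To position $Q_m(q)$ within each chain it then suffices to prove, for all $0 \le i \le m$, that $L(P_i(q)) < R(Q_m(q))$ and $R(Q_m(q)) < R(P_i(q))$, the latter being equivalent to $L(Q_m(q)) < L(P_i(q))$ since both sides are obtained by subtracting $D_q$. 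Substituting the formulas, these amount to positivity of
$$R(Q_m(q)) - L(P_i(q)) = \frac{q^{-mk}}{q^k - 1} - \epsilon_q q^{-ik}, \qquad R(P_i(q)) - R(Q_m(q)) = \epsilon_q q^{-ik} + q^{-(m+1)k+2}(1 - \epsilon_q) - \frac{q^{-mk}}{q^k-1},$$
and this is where the hypothesis $|q - q_k| < q_k^{-(m+2)k-3}$ enters, through the bound $-q_k^{-(m+1)k+1} < \epsilon_q < q_k^{-(m+2)k+1}$ of Lemma \ref{lemma: bounding epsilon in terms of q} together with $k \ge 4$ (which keeps $q$ close to $2$, makes $q_k^{-(k-1)} < \frac{1}{2}$, and makes $q^2 - 1 - q^{-(k-2)} \ge 2$; note also $q > q_{k-1}$, so all the sets are genuine Cantor sets).

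The main obstacle will be the second inequality in the regime $q < q_k$: there $\epsilon_q < 0$, so the term $\epsilon_q q^{-ik}$ works against us, and the worst case is $i = 0$. One handles it by bounding $q^{-(m+1)k+2} - \frac{q^{-mk}}{q^k-1} = q^{-mk}\cdot\frac{q^2 - 1 - q^{-(k-2)}}{q^k - 1} \ge 2 q^{-(m+1)k}$ and noting $|\epsilon_q| < q_k^{-(m+1)k+1} = q_k\, q_k^{-(m+1)k} < 2 q^{-(m+1)k}$ (using $q_k < 2$ and $q < q_k$); since $1 - \epsilon_q > 1$ here, the positive terms strictly dominate $|\epsilon_q|$. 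The remaining positivity checks are comfortably slack: when $\epsilon_q \ge 0$ the first inequality reduces to $\epsilon_q < \frac{q^{-mk}}{q^k - 1}$, which follows from $\epsilon_q < q_k^{-(m+2)k+1}$ and $k \ge 4$, and the second holds because already $q^{-(m+1)k+2}(1 - \epsilon_q) > \frac{q^{-mk}}{q^k-1}$ (equivalently $1 - \epsilon_q > \frac{q^{k-2}}{q^k-1}$, true with a large margin) while $\epsilon_q q^{-ik} \ge 0$. Collecting the resulting orderings in the three cases yields exactly chains (a), (b), (c).
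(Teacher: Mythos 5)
Your proposal is correct and follows essentially the same route as the paper's proof: express the endpoints $L(P_i(q))$, $R(Q_m(q))$ in terms of $\epsilon_q$ via the fixed-point form of $g_{q,k}$, deduce the internal chains of $L(P_i)$ and $R(P_i)$ from the sign of $\epsilon_q$ and the monotonicity of $i \mapsto q^{-ik}$, reduce the relations involving $Q_m(q)$ to the tight case via the common-diameter identity, and then verify the two remaining scalar inequalities using the bounds on $\epsilon_q$ from Lemma \ref{lemma: bounding epsilon in terms of q}. The paper also structures the argument this way (labelling the four parts of the chain A--D and reducing to B and C), and its final estimates are interchangeable with yours.
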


\begin{proof}
    The proof of this lemma is calculation heavy so we take a moment to separate the proof into the stages the argument takes.
    Each step of the argument handles a different part of the long inequality to be proven, but will handle all three of the cases $q < q_k$, $q = q_k$ and $q > q_k$ together.
    Suppose $m \in \mathbb{N}$, $k \geq 4$, $|q - q_k| < q_k^{-(m+2)k-3}$.
    The inequalities we aim to prove in the cases $q < q_k$, $q = q_k$ and $q > q_k$ are respectively
    $$L(Q_m(q)) \underbrace{ <}_\mathrm{A}  L(P_0(q)) \underbrace{ < \cdots <}_{\mathrm{B}} L(P_m(q)) \underbrace{< R(Q_m(q)) <}_{\mathrm{C}} R(P_0(q)) \underbrace{< \cdots <}_\mathrm{D} R(P_m(q)),$$
    $$L(Q_m(q)) \underbrace{ <}_\mathrm{A}  L(P_0(q)) \underbrace{ = \cdots =}_{\mathrm{B}} L(P_m(q)) \underbrace{< R(Q_m(q)) <}_{\mathrm{C}} R(P_0(q)) \underbrace{= \cdots =}_\mathrm{D} R(P_m(q)),$$
    $$L(Q_m(q)) \underbrace{ <}_\mathrm{A}  L(P_m(q)) \underbrace{ < \cdots <}_{\mathrm{B}} L(P_0(q)) \underbrace{< R(Q_m(q)) <}_{\mathrm{C}} R(P_m(q)) \underbrace{< \cdots <}_\mathrm{D} R(P_0(q)).$$

    We start by proving all relations (B) by finding a general expression for $L(P_i(q))$.
    Lemma \ref{lemma: P and Q sets have a common diameter} tells us that $|P_i(q)|$ takes the common value $D_q$ for all $i$  and so the relations (D) are implied by the relations (B).
    It also tells us that $|Q_m(q)|$ shares this common value, so the relations (A) follow from the relations (C).
    Therefore it suffices to prove the relations (B) and (C).

    \textbf{Relations (B)}

    As in the proof of Lemma \ref{lemma: P and Q sets have a common diameter}, $L(P_i(q))$ is the left endpoint of $g_{q,k}^i(\pi_q(S_{k-1}) + 1)$.
    Since $0 \in \pi_q(S_{k-1})$ is the smallest element of this set, using \eqref{equation: affine transformation fixed point} we see that
    \begin{equation}
        \label{equation: left endpoint of P_i(q)}
        L(P_i(q))  = g_{q,k}^i(1)
            = g_{q,k}^i(\pi_q((1^{k-1}0)^\infty) + \epsilon_q)
            = \pi_q((1^{k-1}0)^\infty) + q^{-ki}\epsilon_q.
    \end{equation}
    By \eqref{equation: left endpoint of P_i(q)} we have the following implications.
    If $q < q_k$ then $\epsilon_q < 0$ and
    $$L(P_0(q)) < \cdots < L(P_m(q)).$$
    If $q =q_k$ then $\epsilon_q = 0$ and
    $$L(P_0(q)) = \cdots = L(P_m(q)).$$
    If $q > q_k$ then $\epsilon_q > 0$ and
    $$L(P_m(q)) < \cdots < L(P_0(q)).$$

    \textbf{Relations (C)}

    Given $|q - q_k| < q_k^{-(m+2)k-3}$ we show that
    \begin{enumerate}
        \item if $q < q_k$ then $L(P_m(q)) < R(Q_m(q)) < R(P_0(q))$,
    \end{enumerate}
        and
    \begin{enumerate}
        \item[2.] if $q \geq q_k$ then $L(P_0(q)) < R(Q_m(q)) < R(P_m(q))$.
    \end{enumerate}
    $R(Q_m(q))$ is the right endpoint of the set $g_{q,k}^m(\pi_q(S_{k-1}))$, so
    \begin{align}
    \label{equation: right endpoint of Q_m}
        R(Q_m(q))  &= g_{q,k}^m(\pi_q(1^\infty)),\nonumber \\
                &= g_{q,k}^m(\pi_q((1^{k-1}0)^\infty) + \pi_q((0^{k-1}1)^\infty)),\nonumber \\
                &= \pi_q((1^{k-1}0)^\infty) + q^{-km}\pi_q((0^{k-1}1)^\infty),\nonumber \\
                &= \pi_q((1^{k-1}0)^\infty) + \frac{q^{-km}}{q^k-1}.
    \end{align}
    To find values for $R(P_i(q))$ for $0 \leq i \leq m$, we use $R(P_i(q)) = L(P_i(q)) + D_q$.
    By \eqref{equation: left endpoint of P_i(q)} and Lemma \ref{lemma: P and Q sets have a common diameter} this gives,
    \begin{align}
        \label{equation: right endpoint of P_i}
        R(P_i(q))  =\pi_q((1^{k-1}0)^\infty) + q^{-ki}\epsilon_q + q^{-(m+1)k + 2} \left( \frac{1}{q-1} - \frac{1}{q^k -1} \right).
    \end{align}
    In the first case, when $q < q_k$ and $\epsilon_q < 0$, by \eqref{equation: left endpoint of P_i(q)} and \eqref{equation: right endpoint of Q_m},
    \begin{equation}
    \label{equation: bound for diameter of B(q)}
    R(Q_m(q)) - L(P_m(q)) = \frac{q^{-km}}{q^k-1} - q^{-km}\epsilon_q
                        > q^{-(m+1)k}
                        > 0.  
    \end{equation}
    The second inequality is only valid when $q < q_k$, but we only use this bound under this premise.
    Also, by \eqref{equation: right endpoint of Q_m} and \eqref{equation: right endpoint of P_i},
    \begin{align*}
        R(P_0(q)) - R(Q_m(q)) &= \left( \epsilon_q + q^{-(m+1)k + 2} \left( \frac{1}{q-1} - \frac{1}{q^k -1} \right) \right) - q^{-km}\left( \frac{1}{q^k-1} \right),\\
                        &= \epsilon_q + q^{-km}\left( q^{-k+2}\left( \frac{1}{q-1} - \frac{1}{q^k -1} \right) - \frac{1}{q^k-1} \right).
    \end{align*}
    Since $\epsilon_q > -q_k^{-(m+1)k + 1}$ whenever $|q-q_k| < q_k^{-(m+2)k-3}$, according to Lemma \ref{lemma: bounding epsilon in terms of q}, by factoring out $q^{-km}$, the above expression is positive if
    \begin{equation}
        \label{equation: positive expression in q}
        \left(q^{-k+2}\left(\frac{1}{q-1} - \frac{1}{q^k-1}\right) - \frac{1}{q^k-1} \right) - q^{-k+1} > 0.
    \end{equation}
    This inequality is made clear by interpreting the terms as projections of sequences in base $q$.
    In this way, the left hand side of \eqref{equation: positive expression in q} is equal to
    $$   \pi_q(0^{k-2}(1^{k-1}0)^\infty) - \pi_q((0^{k-1}1)^\infty) - \pi_q(0^{k-2}10^\infty) 
      =  \pi_q(0^{k-2}(001^{k-3}0)(101^{k-3}0)^\infty),$$
    which is obviously positive whenever $k \geq 4$. 
    
    In the second case, when $q \geq q_k$ and $\epsilon_q \geq 0$, using Lemma \ref{lemma: bounding epsilon in terms of q} again we know $\epsilon_q < q_k^{-(m+2)k+1}$, and by \eqref{equation: left endpoint of P_i(q)} and \eqref{equation: right endpoint of Q_m} we have
    \begin{align}
    \label{equation: lower bound of q_k}
        R(Q_m(q)) - L(P_0(q)) &= \pi_q((1^{k-1}0)^\infty) + q^{-km}\left(\frac{1}{q^k-1}\right) - (\pi_q((1^{k-1}0)^\infty) + \epsilon_q), \nonumber \\
                        &= \frac{q^{-km}}{q^k-1} - \epsilon_q, \nonumber \\
                        &> q^{-(m+1)k} - q_k^{-(m+2)k+1}, \nonumber \\
                        &> q_k^{-(m+1)k-1}, \\
                        &>0. \nonumber
    \end{align}
    The lower bound of $q_k^{-(m+1)k-1}$ from step \eqref{equation: lower bound of q_k} follows from a routine calculation.
    Also, by \eqref{equation: right endpoint of P_i} and \eqref{equation: right endpoint of Q_m},
    \begin{align*}
        R(P_m(q)) - R(Q_m(q)) &= \left(q^{-km}\epsilon_q + q^{-(m+1)k + 2} \left( \frac{1}{q-1} - \frac{1}{q^k -1} \right)\right) - q^{-km}\left(\frac{1}{q^k-1}\right),\\
                        &> q^{-km}\epsilon_q + q^{-km}\left( q^{-k+2}\left( \frac{1}{q-1} - \frac{1}{q^k -1} \right) - \frac{1}{q^k-1} \right),
    \end{align*}
    and it follows from our analysis of \eqref{equation: positive expression in q} that this is positive.
    This completes the proof.
\end{proof}

\subsection{Bounding $\beta_q$ below}
\label{subsection: bounding beta below}

Recall that 
$$B(q) = \cap_i \mathrm{conv}(P_i(q)) \cap \mathrm{conv}(Q_m(q)),$$
and
$$\beta_q = \min \left\{ \frac{1}{4} , \frac{|B(q)|}{\max\{|P_0(q)| , \ldots , |P_m(q)| , |Q_m(q)|\}} \right\}.$$

\begin{lemma}
    \label{lemma: bounding beta below}
    If $m \in \mathbb{N}$, $k \geq 2$ and $|q - q_k| < q_k^{-(m+2)k-3}$, then $\beta_q > 1/8$.
\end{lemma}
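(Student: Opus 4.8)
The plan is to bound $|B(q)|$ from below and $D_q = \max\{|P_0(q)|,\dots,|P_m(q)|,|Q_m(q)|\}$ from above, and then check that the ratio exceeds $1/8$, at which point $\beta_q = 1/8$ or larger and we are done. The quantity $D_q$ has already been computed exactly in Lemma \ref{lemma: P and Q sets have a common diameter}: it equals $q^{-(m+1)k+2}\pi_q((1^{k-1}0)^\infty)$, which we can bound above by something of order $q^{-(m+1)k+2}\cdot\frac{1}{q-1}$, and since $q$ is within $q_k^{-(m+2)k-3}$ of $q_k$ and $q_k \to 2$, we have $\frac{1}{q-1}$ bounded (say by $2$ for $k\geq 2$, or more carefully using that $q > G$). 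So the real work is a good lower bound on $|B(q)|$.

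First I would identify $B(q)$ explicitly from the three cases of Lemma \ref{lemma: layout of endpoint of P and Q}. In each case ($q<q_k$, $q=q_k$, $q>q_k$), the left endpoint of $B(q)$ is $\max_i L(\cdot)$ over all the sets and the right endpoint is $\min_i R(\cdot)$; by the chain of inequalities in that lemma, $B(q) = [\,\max\{L(P_i(q))\}_i,\ R(Q_m(q))\,]$ in all cases (since $R(Q_m(q))$ is the smallest of the right endpoints and the largest left endpoint among the $P_i$'s is $L(P_m(q))$ when $q\le q_k$ and $L(P_0(q))$ when $q\ge q_k$, both of which are less than $L(Q_m(q))$ is false — rather $L(Q_m(q))$ is the smallest left endpoint, so it does not participate). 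Concretely, $|B(q)| = R(Q_m(q)) - L(P_{i^*}(q))$ where $i^* = m$ if $q<q_k$ and $i^*=0$ if $q\ge q_k$. But these are exactly the quantities estimated inside the proof of Lemma \ref{lemma: layout of endpoint of P and Q}: equation \eqref{equation: bound for diameter of B(q)} gives $R(Q_m(q)) - L(P_m(q)) > q^{-(m+1)k}$ when $q<q_k$, and equation \eqref{equation: lower bound of q_k} gives $R(Q_m(q)) - L(P_0(q)) > q_k^{-(m+1)k-1}$ when $q\ge q_k$. So in every case $|B(q)|$ is at least something of order $q^{-(m+1)k-1}$.

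Then the ratio $|B(q)|/D_q$ is at least $\frac{q_k^{-(m+1)k-1}}{q^{-(m+1)k+2}\pi_q((1^{k-1}0)^\infty)}$. Using $q \approx q_k$ (the factor $(q/q_k)^{(m+1)k}$ is within a bounded multiplicative constant of $1$ since $|q-q_k|$ is exponentially tiny), this is essentially $q^{-3}/\pi_q((1^{k-1}0)^\infty)$. Since $\pi_q((1^{k-1}0)^\infty) = \frac{1}{q-1} - \frac{1}{q^k-1} < \frac{1}{q-1}$, and since $k\geq 2$ forces $q$ close to $q_2 = G$ or larger so that $q^{-3}(q-1) = q^{-3} - q^{-4}$ is... here I would just use the crude bounds $q < 2$ and $q-1 > 1/2$ (valid since $q > q_k \geq G > 3/2$ once we note the hypotheses; more carefully $q$ could be slightly below $q_k$ but still above $q_{k-1} > G$ for the relevant regime, and for $k=2,3$ one checks directly). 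This yields $|B(q)|/D_q$ comfortably bigger than $1/4$, hence $\beta_q = 1/4 > 1/8$; and even with generous slack for the constant-factor distortions from $q \neq q_k$, the bound stays above $1/8$. The main obstacle is bookkeeping the multiplicative error from replacing $q$ by $q_k$ (and vice versa) in the various exponential factors and confirming it cannot erode the margin below $1/8$; this is a routine but careful estimate using $|q-q_k| < q_k^{-(m+2)k-3}$, which makes $(q/q_k)^{\pm(m+1)k}$ extremely close to $1$. I would conclude by assembling these bounds into the single chain $\beta_q \geq \min\{1/4,\ |B(q)|/D_q\} > 1/8$.
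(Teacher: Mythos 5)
Your overall plan matches the paper's: identify $B(q)$ from Lemma~\ref{lemma: layout of endpoint of P and Q}, take the lower bounds on $|B(q)|$ from \eqref{equation: bound for diameter of B(q)} (case $q<q_k$) and \eqref{equation: lower bound of q_k} (case $q\geq q_k$), take the exact formula for $D_q$ from Lemma~\ref{lemma: P and Q sets have a common diameter}, and compare. This is the same decomposition the paper uses.

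However, the numerics as you sketch them do not actually close the argument, and your assessment of the margin is off. The paper's final bound is $\beta_q > q_k^{-3}$, which tends to $1/8$ from above as $q_k\to 2$ — the margin is razor-thin, not ``comfortably bigger than $1/4$''. If, as you propose, you bound $\pi_q((1^{k-1}0)^\infty)$ crudely by $1/(q-1)$ and then apply the ``crude bounds $q<2$ and $q-1>1/2$'' multiplicatively, you get $q^{-3}(q-1) > (1/8)(1/2) = 1/16$, which is below $1/8$ and proves nothing. (It is in fact true that $q^{-3}(q-1) > 1/8$ on $(G,2)$, but seeing this requires analysing the function $q\mapsto q^{-3}(q-1)$ — e.g.\ factoring $q^3 - 8q + 8 = (q-2)(q^2+2q-4)$ — not just multiplying endpoint bounds.) Moreover, you wave away the distortion factor $(q/q_k)^{(m+1)k}$ as ``within a bounded multiplicative constant of $1$'', but since the target margin itself vanishes as $q_k\to 2$, a generic ``bounded constant'' is not good enough; you need to show the distortion is on the order of $1 + q_k^{-(m+1)k}$, which is what the hypothesis $|q-q_k|<q_k^{-(m+2)k-3}$ is actually there for. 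The paper sidesteps all of this by choosing the sharper bound $\pi_q((1^{k-1}0)^\infty) \leq \pi_{q_k}((1^{k-1}0)^\infty) = 1$ (valid for $q\geq q_k$ by Lemma~\ref{lemma: projections decrease with q}), giving $D_q < q_k^{-(m+1)k+2}$ directly, and in the case $q<q_k$ uses the monotonicity $q^{-(m+1)k} > q_k^{-(m+1)k}$ together with the estimate $D_q < q_k^{-(m+1)k+3}$; in both cases the ratio drops out cleanly as $q_k^{-3}$ with no further ``bookkeeping'' needed. Your plan would work if you replaced the $1/(q-1)$ bound and the multiplicative crude bounds by these sharper ones, but as stated the decisive step is broken.
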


\begin{proof}
Let $m \in \mathbb{N}$, $k \geq 2$ and $|q - q_k| < q_k^{-(m+2)k-3}$.
Using Lemma \ref{lemma: layout of endpoint of P and Q}, we know that if $q < q_k$, $B(q)$ is given by the interval $[L(P_m(q)) , R(Q_m(q))]$ and if $q \geq q_k$, $B(q)$ is given by the interval $[L(P_0(q)) , R(Q_m(q))]$.
By Lemma \ref{lemma: P and Q sets have a common diameter}, $\max \{ |P_0(q)| , \ldots , |P_m(q)| , |Q_m(q)|\} = D_q$ (their common value), so $\beta_q = \min \left\{ \frac{1}{4} , \frac{|B(q)|}{D_q} \right\}$.

If $q < q_k$ then $q^{-1} > q_k^{-1}$ and $|B(q)| = R(Q_m(q)) - L(P_m(q))$.
Recall that $|P_i(q)| = D_q$ for all $i = 0 , \ldots , m$, and it can be checked using Lemma \ref{lemma: P and Q sets have a common diameter} that $D_q < q_k^{-(m+1)k+3}$.
It follows from \eqref{equation: bound for diameter of B(q)} and the fact that $q^{-(m+1)k} > q_k^{-(m+1)k}$ that $R(Q_m(q)) - L(P_m(q)) > q_k^{-(m+1)k}$ and hence $\beta_q > q_k^{-3} > \frac{1}{8}$.

If $q \geq q_k$ then $q^{-1} \leq q_k^{-1}$ and $|B(q)| = R(Q_m(q)) - L(P_0(q))$.
We know from \eqref{equation: lower bound of q_k} that $R(Q_m(q)) - L(P_0(q)) > q_k^{-(m+1)k-1}$.
Using Lemma \ref{lemma: projections decrease with q} we have $\pi_q((1^{k-1}0)^\infty) \leq \pi_{q_k}((1^{k-1}0)^\infty) = 1$, so Lemma \ref{lemma: P and Q sets have a common diameter} implies that $D_q < q_k^{-(m+1)k+2}$, so $\beta_q > q_k^{-3} > \frac{1}{8}$.
Hence if $|q - q_k| < q_k^{-(m+2)k-3}$ then $\beta_q > \frac{1}{8}$.
\end{proof}

\subsection{Proof of Theorem \ref{theorem: main theorem}}
\label{subsection: Proof of main theorem}

The above results allow us to prove Theorem \ref{theorem: main theorem}.

\begin{proof}[Proof of Theorem \ref{theorem: main theorem}]
    Let $m \in \mathbb{N}$, $k \geq K_m$ and $|q - q_k| < q_k^{-(m+2)k-3}$.
    As discussed in Subsection \ref{subsection: Proof outline main theorem}, to prove Theorem \ref{theorem: main theorem} it suffices to show that the collection of compact subsets of $\mathbb{R}$ given by $\{P_0(q) , \ldots P_m(q) , Q_m(q)\}$ satisfy the assumptions of Theorem \ref{theorem: FandY}.
    We know that $B(q) = \cap_{i=1}^m \mathrm{conv}(P_i(q)) \cap Q_m(q)$ is nonempty by Lemma \ref{lemma: layout of endpoint of P and Q}, and we know that each of $P_0(q) , \ldots , P_m(q) , Q_m(q)$ has thickness at least $q^{k-4}$ by Lemmas \ref{lemma: thickness of S_k} and \ref{lemma: bound thickness of P_i below}.
    By Lemma \ref{lemma: bounding beta below} it remains to show that there exists some $c \in (0,1)$ such that the following inequality holds
    \begin{equation}
        \label{equation: applying FandY with beta=1/8}
      (m+2)(q^{k-4})^{-c} \leq \frac{1}{(432)^2}8^{-c}(1-8^{c-1}).  
    \end{equation}
    Given any $c \in (0,1)$, since $\lim_{k \rightarrow \infty}(q^{k-4})^{-c} = 0$, and
    the right hand side of \eqref{equation: applying FandY with beta=1/8} is independent of $k$, we know there is some value of $k$ for which \eqref{equation: applying FandY with beta=1/8} holds.
    Hence we can arbitrarily\footnote{Some numerical testing showed that $c = \frac{19}{20}$ is a reasonable choice and the authors do not believe that we can strengthen Theorem \ref{theorem: main theorem} by choosing a different value of $c$.} fix $c = \frac{19}{20}$ and find the smallest value of $k$ in terms of $m$ for which \eqref{equation: applying FandY with beta=1/8} holds.
    Notice that $K_m$ is nondecreasing with $m$ and $K_1 = \left\lceil \frac{20}{19}\left( \log_{1.999}(3) + 24 \right) + 4 \right\rceil = 31$.
    Therefore $K_m \geq 31$ for all $m \in \mathbb{N}$ and because $q_{31} > q_{10} > 1.999$ (by direct computation), if $k \geq K_m$, then \eqref{equation: applying FandY with beta=1/8} is implied by
    \begin{equation}
        \label{equation: applying FandY with c}
      (m+2)(1.999)^{(4-k)\frac{19}{20}} \leq \frac{1}{(432)^2}(8^{-\frac{19}{20}})(1-8^{-\frac{1}{20}}).  
    \end{equation}
    The right hand side of \eqref{equation: applying FandY with c} is bounded below by $7.3389 \times 10^{-8}$.
    Using $\log_{1.999}(7.3389 \times 10^{-8}) > -24$, a sufficient condition on $k$ to satisfy \eqref{equation: applying FandY with c} is
    $$k \geq \frac{20}{19}\left( \log_{1.999}(m+2) + 24 \right) + 4,$$
    which is obviously true when $k \geq K_m$.
    This completes the proof.
\end{proof}



\section{Proof of Theorem \ref{theorem: B_3 theorem}}
\label{section: proof of B_3 theorem}

We first outline the proof of Theorem \ref{theorem: B_3 theorem}.

\subsection{Proof outline}
\label{subsection: sketch proof of B_3 theorem}

Using Proposition \ref{proposition: intersection gives nonempty B}, we know that for any $q \in (G,2)$, if
$$(\mathcal{U}_q + 1) \cap g_{q,k}((\mathcal{U}_q + 1) \cap (\mathcal{U}_q)) \neq \emptyset,$$
then $q \in \mathcal{B}_3$.
Let $k \geq 9$.
If $|q - q_k| \leq q_k^{-2k-6}$ then it can be checked that $q > q_{k-1}$ and hence $\pi_q(S_{k-1}) \subset \mathcal{U}_q$ by Lemma \ref{lemma: containment in U_q}\ref{lemma item: U_q contains S_k}.
For each $q$ such that $|q - q_k| < q_k^{-2k-6}$, we construct a family of sets $A_q \subset \{0,1\}^\mathbb{N}$ such that $\pi_q(A_q) \subset (\mathcal{U}_q + 1) \cap \mathcal{U}_q$ whenever $q > q_9$.
This means that if
\begin{equation}
    \label{equation: B_3 nonempty U_q form}
    (\pi_q(S_{k-1}) + 1) \cap g_{q,k}(\pi_q(A_q)) \neq \emptyset,
\end{equation}
then $q \in \mathcal{B}_3$.
Hence to prove Theorem \ref{theorem: B_3 theorem} it suffices to prove that \eqref{equation: B_3 nonempty U_q form} holds when either
\begin{enumerate}[(a)]
    \item
    \label{item: theorem condition a}
    $k \geq 10$ and $|q - q_k| \leq q_k^{-2k-6}$,
\end{enumerate}
or
\begin{enumerate}[(b)]
    \item
    \label{item: theorem condition b}
    $0 < q - q_9 \leq q_9^{-24}$.
\end{enumerate} 
Since \eqref{equation: B_3 nonempty U_q form} concerns an intersection of only two sets, we can employ Theorem \ref{theorem: Newhouse} to conclude their intersection is nonempty.
To do this, we require the sets in question to be interleaved and for the product of their thicknesses to be at least 1.
By Lemma \ref{lemma: thickness of S_k} we know that $\tau(\pi_q(S_{k-1})) > q^{k-4}$ when $k \geq 4$ and $q > q_{k-1}$ and it has been shown in previous work by the authors \cite[Proposition 3.13]{baker2023cardinality} that the set $A_q$ we will construct satisfies
\begin{equation}
    \label{equation: thickness of pi_q(A_q) bound}
    \tau(\pi_q(A_q)) > q^{-5},
\end{equation}
when $q > q_9$.
Recall by the definition of $g_{q,k} = f_1^{-(k-1)} \circ f_0^{-1}$ that $g_{q,k}$ is an affine map
and hence preserves thickness, so
\begin{equation*}
    \label{equation: thickness product at least 1}
    \tau(\pi_q(S_{k-1}) + 1) \times \tau(g_{q,k}(\pi_q(A_q))) > q^{k-4} \times q^{-5} \geq 1,
\end{equation*}
whenever $k \geq 9$, $q > q_9$.
Therefore the thickness condition of Theorem \ref{theorem: Newhouse} is satisfied for both cases \ref{theorem item: B_3 theorem at least 10} and \ref{theorem item: B_3 theorem 9} of Theorem \ref{theorem: B_3 theorem} and it only remains to prove that the sets in \eqref{equation: B_3 nonempty U_q form} are interleaved for $q$ in the required range.
We do this using the notion of $\epsilon$-strong interleaving, introduced in Subsection \ref{subsection: thickness and interleaving}.

Let $k \geq 9$, $|q - q_k| \leq q_k^{-2k-6}$.
We show the following.
\begin{enumerate}
    \item 
    \label{item: B_3 theorem outline strongly interleaved}
    The sets $(\pi_{q_k}(S_{k-1}) + 1)$ and $g_{q_k,k}(\pi_{q_k}(A_{q_k}))$ are $q_k^{-2k-4}$-strongly interleaved.
    \item
    \label{item: B_3 theorem outline bounded Hausdorff distance}
    Both $d_{\mathrm{H}}((\pi_{q_k}(S_{k-1}) + 1) , (\pi_q(S_{k-1}) + 1))$ and $d_{\mathrm{H}}(g_{q_k,k}(\pi_{q_k}(A_{q_k})) , g_{q,k}(\pi_q(A_q)))$ are less than $q_k^{-2k-4}$.
\end{enumerate}
By the definition of $\epsilon$-strong interleaving, these results tell us that $(\pi_q(S_{k-1}) + 1)$ and $g_{q,k}(\pi_q(A_q))$ are interleaved.
To prove the first item, we will find four points in $(\pi_{q_k}(S_{k-1}) + 1) \cap g_{q_k,k}(\pi_{q_k}(A_{q_k}))$ that are sufficiently far apart and then apply Lemma \ref{lemma: strong interleaving}.
Since the thickness condition \eqref{equation: thickness of pi_q(A_q) bound} only holds when $q > q_9$, the complete argument only applies in the smaller domain where either \ref{item: theorem condition a} or \ref{item: theorem condition b} is true.
Hence Theorem \ref{theorem: Newhouse} applies to \eqref{equation: B_3 nonempty U_q form} when either \ref{item: theorem condition a} or \ref{item: theorem condition b} is true and the proof is complete.

\subsection{Construction of $A_q$}
\label{subsection: construction of A_q}

In this subsection we construct the set $A_q$ which will have the important property that for $q > q_9$, $\pi_q(A_q) \subset \mathcal{U}_q \cap (\mathcal{U}_q + 1)$ and $\tau(\pi_q(A_q)) > q^{-5}$.
Define the set 
$$W_2 = \{ (-1 0) , (0 -\!1) , (00) , (01) , (10) \},$$
and write $W_2^\mathbb{N}$ for the set of infinite concatenations of elements of $W_2$.
For any $k \in \mathbb{N}$ and $\seqj{\delta}{1}{k} \in \{-1, 0 ,1\}^k$, define the notation
$$\seqj{\delta}{1}{k} W_2^\mathbb{N} = \{ \seqj{\epsilon}{1}{\infty} \in \{-1, 0,1\}^\mathbb{N} : \seqj{\epsilon}{1}{k} = \seqj{\delta}{1}{k} \ \mathrm{and} \ \seqj{\epsilon}{k+1}{\infty} \in W_2^\mathbb{N} \} .$$

\begin{lemma}
    \label{lemma: expansion of 1 has prefix 1^k0^k+4}
    If $k \geq 9$ and $|q - q_k| \leq q_k^{-2k-6}$ then there exists a sequence $\seqj{c}{1}{\infty} \in 1^k0^{k+4} W_2^\mathbb{N}$ such that $1 = \pi_q(\seqj{c}{1}{\infty})$.
\end{lemma}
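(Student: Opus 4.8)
The plan is to mimic the strategy of Lemma~\ref{lemma: expansion of 1 has certain prefix}: identify two bracketing bases $q_-$ and $q_+$ for which $1$ has the explicit expansions $1^k 0^{k+4} 0^\infty$ and $1^k 0^{k+4} (\max W_2^\mathbb{N})$ respectively, use Lemma~\ref{lemma: projections decrease with q} to translate the bound $|q-q_k| \le q_k^{-2k-6}$ into the statement that $q \in (q_-, q_+)$, and then read off the existence of an expansion of $1$ with prefix $1^k 0^{k+4}$ and tail in $W_2^\mathbb{N}$. The subtlety compared to the earlier lemma is that the tail alphabet is not all of $\{0,1\}^\mathbb{N}$ but the restricted set $W_2^\mathbb{N} \subset \{-1,0,1\}^\mathbb{N}$, so one must check that the interval of values $\pi_q(1^k 0^{k+4} W_2^\mathbb{N})$ actually \emph{contains} the point $1$ — i.e. that the smallest and largest elements of $W_2^\mathbb{N}$ straddle the required tail value.

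First I would compute, for a fixed $q$, the range of $\pi_q$ on $W_2^\mathbb{N}$. Since $W_2 = \{(-10),(0\,{-}1),(00),(01),(10)\}$, the lexicographically largest infinite concatenation is $(10)^\infty$ with $\pi_q((10)^\infty) = \frac{q^{-1}}{1-q^{-2}} = \frac{q}{q^2-1}$, and the smallest is $(-10)^\infty$ (or $(0\,{-}1)^\infty$), whose projection is $-\frac{q}{q^2-1}$. So $\pi_q(W_2^\mathbb{N}) \supseteq$ some interval symmetric about $0$ of half-width $\frac{q}{q^2-1}$; more precisely I would argue that $\pi_q(W_2^\mathbb{N})$ is a Cantor-type set whose convex hull is $\left[-\frac{q}{q^2-1}, \frac{q}{q^2-1}\right]$ and that the relevant "target" tail value lies comfortably inside this hull. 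Then $\pi_q(1^k 0^{k+4} W_2^\mathbb{N})$ has convex hull
\[
\left[\pi_q(1^k 0^\infty) - \frac{q^{-(2k+4)}\,q}{q^2-1}\,,\ \pi_q(1^k 0^\infty) + \frac{q^{-(2k+4)}\,q}{q^2-1}\right],
\]
and the claim reduces to showing $1$ lies strictly inside this interval, i.e. $\bigl|1 - \pi_q(1^k 0^\infty)\bigr| < \frac{q^{-(2k+4)}\,q}{q^2-1}$.

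For the key estimate, at $q = q_k$ we have $\pi_{q_k}(1^k 0^\infty) = \pi_{q_k}(1^k) = \frac{1 - q_k^{-k}}{q_k - 1} = 1 + \frac{q_k^{-k}(q_k^k - q_k^{k-1} - \cdots - 1)}{q_k-1}$ adjusted appropriately — in fact since $q_k$ is the $k$-Bonacci number, $\pi_{q_k}(1^{k-1}0) = \pi_{q_k}((1^{k-1}0)^\infty)\cdot(\text{finite correction})$, and one gets a clean expression showing $1 - \pi_{q_k}(1^k 0^\infty) = q_k^{-k} - q_k^{-(k-1)}\cdot(\text{small})$, on the order of $q_k^{-k}$ up to a bounded factor. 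Moving from $q_k$ to a nearby $q$ with $|q-q_k|\le q_k^{-2k-6}$ perturbs $\pi_q(1^k 0^\infty)$ by at most $O(k)\cdot|q-q_k| \le O(k) q_k^{-2k-6}$, which is negligible against the window half-width $\asymp q^{-(2k+3)}$. I would carry this out by the same two-case split ($q < q_k$, $q > q_k$) as in Lemma~\ref{lemma: expansion of 1 has certain prefix}, bounding $\pi_q(1^k 0^\infty) - 1$ above and below using Lemma~\ref{lemma: projections decrease with q} and crude numerical bounds ($q > 15/8$, say, valid since $k \ge 9$ forces $q$ extremely close to $2$), and checking the resulting scalar inequalities hold with the exponent $2k+6$ in the hypothesis.

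The main obstacle I anticipate is not any single estimate but verifying that the target tail value genuinely lands inside the convex hull of $\pi_q(W_2^\mathbb{N})$ rather than in one of its gaps, since $W_2^\mathbb{N}$ is not an interval. If the point $1 - \pi_q(1^k 0^\infty)$, rescaled by $q^{2k+4}$, could fall into a gap of $\pi_q(W_2^\mathbb{N})$, the naive argument breaks. I expect this is handled either because the relevant value is close to $0$ and $0 \in \pi_q(W_2^\mathbb{N})$ (as $(00)^\infty$ projects to $0$, and nearby dyadic-like tails in $W_2^\mathbb{N}$ fill a neighbourhood of $0$), or by invoking the structure of $W_2^\mathbb{N}$ established in the authors' earlier work \cite{baker2023cardinality}; in the writeup I would state precisely which subinterval of $\pi_q(W_2^\mathbb{N})$ around $0$ is used and cite or re-derive its non-emptiness. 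Everything else is the routine geometric-series bookkeeping already rehearsed in the proof of Lemma~\ref{lemma: expansion of 1 has certain prefix}.
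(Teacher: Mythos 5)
Your proposal is correct and takes essentially the same route as the paper: reduce the claim to a scalar inequality $\bigl|1 - \pi_q(1^k 0^\infty)\bigr| \leq q^{-(2k+4)}\frac{q}{q^2-1}$ by bracketing with the extreme tails $(-10)^\infty$ and $(10)^\infty$, then verify this inequality from $|q-q_k|\le q_k^{-2k-6}$ via geometric-series manipulations and crude numerical bounds for $q$ near $2$. The potential gap you flag — that the target tail value might land in a gap of $\pi_q(W_2^\mathbb{N})$ rather than in the set itself — is exactly the point the paper handles by citing Lemma 3.8 of \cite{baker2023cardinality}, which states that every $x$ in the interval $H_q = [\pi_q((-10)^\infty),\pi_q((10)^\infty)]$ admits an expansion in $W_2^\mathbb{N}$ (so in fact $\pi_q(W_2^\mathbb{N})$ has no gaps in $H_q$); since you explicitly say you would invoke that structural result, your argument is complete.
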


In words this lemma is claiming the existence of an expansion of $1$ in base $q$ which agrees with the expansion of $1$ in base $q_k$, given by $1^k 0^\infty$, for the first $2k+4$ digits, and moreover this expansion has a tail in $W_2^\mathbb{N}$.
We will see that this lemma is required to show that the sets $\pi_q(A_q)$ and $\pi_{q_k}(A_{q_k})$ remain sufficiently close in the Hausdorff metric when $|q - q_k| < q_k^{-2k-6}$.
The intuition here is that since the sets $A_q$ will be shown to depend upon a given fixed expansion of $1$ in base $q$, we require the expansions of $1$ themselves to be nearby in the sense that they agree on a large initial segment.

\begin{proof}
    For all $q \in (1,2)$, define $H_q$ to be the interval $[\pi_q((-10)^\infty) , \pi_q((10)^\infty)]$.
    We want to show that if $k \geq 9$ and $|q - q_k| < q_k^{-2k-6}$ then $1$ satisfies 
    \begin{equation}
    \label{equation: condition in H_q}
        f_0^{k+4} \circ f_1^k(1) \in H_q.
    \end{equation}
    It was shown in \cite[Lemma 3.8]{baker2023cardinality} that if $x \in H_q$ then $x$ admits a base $q$ expansion in $W_2^\mathbb{N}$.
    Therefore, if \eqref{equation: condition in H_q} holds there is a sequence $\seqj{\epsilon}{1}{\infty} \in W_2^\mathbb{N}$ such that $f_0^{k+4} \circ f_1^k(1) = \pi_q(\seqj{\epsilon}{1}{\infty})$.
    By Lemma \ref{lemma: relation between maps and projections extension}, this tells us that $1 = \pi_q(1^k0^{k+4}\seqj{\epsilon}{1}{\infty})$.
    Hence we know there is a sequence $\seqj{c}{1}{\infty} \in 1^k 0^{k+4} W_2^\mathbb{N}$ with $1 = \pi_q(\seqj{c}{1}{\infty})$, and so the lemma holds.
    Setting $\seqj{c}{1}{\infty} \in \{0,1\}^\mathbb{N}$ to be any sequence with $1 = \pi_q(\seqj{c}{1}{\infty})$, we can observe using Lemma \ref{lemma: relation between maps and projections extension} again that \eqref{equation: condition in H_q} is equivalent to
    $$\pi_q(\seqj{c}{1}{\infty} \in [\pi_q(1^k0^{k+4} (-1 0)^\infty) , \pi_q(1^k0^{k+4} (1 0)^\infty)],$$
    that is,
    $$\pi_q(1^k0^{k+4} (-1 0)^\infty) \leq \pi_q(\seqj{c}{1}{\infty}) \leq  \pi_q(1^k0^{k+4} (10)^\infty),$$
    and we have the equivalent inequalities
    $$\pi_q(0^{2k+4}(-10)^\infty) \leq \pi_q(\seqj{c}{1}{\infty}) - \pi_q(1^k0^\infty) \leq \pi_q(0^{2k+4}(10)^\infty),$$
    \begin{equation}
    \label{equation: difference of projection in different bases}
    |\pi_{q_k}(1^k0^\infty) - \pi_q(1^k 0^\infty)| \leq \pi_q(0^{2k+4} (10)^\infty),
    \end{equation}
    where we have replaced $\pi_q(\seqj{c}{1}{\infty})$ with $\pi_{q_k}(1^k 0^\infty)$ in the last step since both are equal to $1$.
    It can be checked that \eqref{equation: difference of projection in different bases} is equivalent to
    \begin{equation}
    \label{equation: difference of projections algebraically}
    \left|\left( \frac{1}{q_k-1} - \frac{1}{q-1}\right) - \left( \frac{q_k^{-k}}{q_k-1} - \frac{q^{-k}}{q-1}\right) \right| \leq q^{-2k-4}\frac{q}{q^2-1}.
    \end{equation}
    Both terms in brackets always have the same sign and the latter has smaller magnitude, so we can ignore it and deduce that \eqref{equation: difference of projections algebraically} is implied by
    $$\frac{|q - q_k|}{(q-1)(q_k-1)} \leq q^{-2k-4}\frac{q}{q^2-1}.$$
    Since $\frac{q^2}{q^2-1} >1$ for all $q \in (1,2)$, \eqref{equation: condition in H_q} is true if
    $$|q-q_k| < q^{-2k-5}(q_k-1)(q-1).$$
    Using $|q - q_k| < q_k^{-2k-6}$, this is implied by
    \begin{equation}
    \label{equation: expansion of 1 inequality}
        1 < q_k \left(\frac{q_k}{q}\right)^{2k+5}(q_k-1)(q-1),
    \end{equation}
    and this is what we aim to prove.
    
    Let $k \geq 9$ and $|q - q_k| \leq q_k^{-2k-6}$.
    If $q_k > q$ then $(\frac{q_k}{q}) > 1$ so \eqref{equation: expansion of 1 inequality} is a result of $q_k > 1.9$ and $(q_k-1)(q-1) > 0.9$ (which can be easily checked).
    If $q_k < q$ set $\delta = q - q_k > 0$ and observe that 
    \begin{equation}
    \label{equation: delta inequality}
    \delta/q < \delta \leq q_k^{-2k-6}.
    \end{equation}
    Since $k \geq 9$ we know that $q_k > 1.998$ (by direct computation).
    Bernoulli's inequality tells us that if $2k+5 \geq 1$ and $0 \leq \delta/q \leq 1$ (which are true under our assumptions) then
    $$\left(\frac{q_k}{q}\right)^{2k+5} = (1- \delta/q)^{2k+5} > (1 - (2k+5)q_k^{-2k-6}) > 0.99999,$$
    where the first inequality makes use of \eqref{equation: delta inequality} and the final inequality follows from a direct computation estimating $(2k+5)(1.99)^{-2k-6} > 1.55 \times 10^{-6}$ at $k=9$, and the fact that $(2k+5)q_k^{-2k-6}$ is decreasing as $k$ increases.
    Because $q_k > 1.998$ we also know that
    $$ (q_k - 1)(q-1) > \left(\frac{499}{500}\right)^2 > 0.996,$$
    so the right hand side of \eqref{equation: expansion of 1 inequality} is bounded below by $(1.998)(0.99999)(0.996) > 1$.
    Therefore \eqref{equation: expansion of 1 inequality} holds when $k \geq 9$ and $|q - q_k| < q_k^{-2k-6}$, which completes the proof.
\end{proof}

Let $k \geq 9$, and let $|q - q_k| \leq q_k^{-2k-6}$.
We associate to $q$ a so called \textit{fixed expansion of $1$}, given by $\seqj{c}{1}{\infty} \in 1^k 0^{k+4}W_2^\mathbb{N}$ and satisfying $\pi_q(\seqj{c}{1}{\infty}) = 1$ which we know exists by Lemma \ref{lemma: expansion of 1 has prefix 1^k0^k+4}.
There may be several possible choices for the sequence $\seqj{c}{1}{\infty}$, in this case we just pick one arbitrarily.
As in \cite{baker2023cardinality}, we let $J$ be the set of zeros of the sequence $\seqj{c}{1}{\infty}$:
$$ J = \{j \in \mathbb{N} : c_j = 0\}, $$
which we enumerate as follows:
$J = \{j_0, j_1 , \ldots\}$ where $j_0 < j_1 < \cdots$.
Define 
$$\Jfixedone = \{j_n \in J : n =4m + 1 \ \mathrm{for \ some \ } m \in \mathbb{N}\},$$
$$ \Jfixedzero = \{j_n \in J : n = 4m+3 \ \mathrm{for \ some \ } m \in \mathbb{N}\}, $$
and
$$\Jfree = \{j_n \in J : n \ \mathrm{is \ even} \}.$$
If $j \in \Jfree$ then we call $j$ a \textit{free zero} of $\seqj{c}{1}{\infty}$.
If $j \in \Jfixedone \cup \Jfixedzero$ or if $c_j \in \{-1,1\}$ then $j$ is called a \textit{fixed index}.
Clearly, if $j$ is not a fixed index then it must be a free zero.
Let $1^k 0^{k+4} \seqj{c}{2k+5}{\infty} \in 1^k 0^{k+4} W_2^\mathbb{N}$ be the fixed expansion of $1$ associated to $q$.
The set $A_q$ consists of the sequences $\seqj{a}{1}{\infty} \in \{0,1\}^\mathbb{N}$ which satisfy the following properties:
\begin{enumerate}
    \item $a_j = 1$ if $c_j = 1$,
    \item $a_j = 0$ if $c_j = -1$,
    \item $a_j = 1$ if $j \in \Jfixedone$ and
    \item $a_j = 0$ if $j \in \Jfixedzero$.
\end{enumerate}
Notice that there are no restrictions on the value of $a_j$ if $j \in \Jfree$.
Notice also that $A_q$ depends on $\seqj{c}{1}{\infty}$ and recall that this sequence was itself an arbitrary choice.
The following lemma is an immediate consequence of Proposition 3.10 from \cite{baker2023cardinality}.

\begin{lemma}
    \label{lemma: projection of A_q contained in U_q and U_q + 1}
    If $q \in (q_9, 2)$ then $\pi_q(A_q) \subset \mathcal{U}_q \cap (\mathcal{U}_q + 1)$.
\end{lemma}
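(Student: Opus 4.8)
The plan is to deduce the inclusion directly from \cite[Proposition 3.10]{baker2023cardinality}, once we check that the objects built above are literally the ones to which that proposition applies. That proposition takes a base $q \in (q_9,2)$ together with any base $q$ expansion $\seqj{c}{1}{\infty}$ of $1$ whose tail lies in $W_2^\mathbb{N}$, partitions the zero set $J = \{j_0 < j_1 < \cdots\}$ of $\seqj{c}{1}{\infty}$ into $\Jfree$, $\Jfixedone$ and $\Jfixedzero$ according to the residue of the index $n$ modulo $4$ exactly as above, forms the collection of sequences $\seqj{a}{1}{\infty} \in \{0,1\}^\mathbb{N}$ obeying the four constraints (1)--(4), and concludes that the projection of this collection is contained in $\mathcal{U}_q \cap (\mathcal{U}_q + 1)$. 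So the argument will be a matter of matching hypotheses rather than of new estimates.

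Concretely I would carry out three verifications. First, $q \in (q_9,2)$ is our standing hypothesis, which is precisely the range required by the cited proposition. Second, the sequence $\seqj{c}{1}{\infty}$ fixed in the paragraph preceding the lemma is a base $q$ expansion of $1$ lying in $1^k 0^{k+4} W_2^\mathbb{N}$, whose existence is supplied by Lemma \ref{lemma: expansion of 1 has prefix 1^k0^k+4}; in particular its tail $\seqj{c}{2k+5}{\infty}$ lies in $W_2^\mathbb{N}$, and this is the only feature of $\seqj{c}{1}{\infty}$ that the proposition uses (the initial block $1^k 0^{k+4}$ will only be needed later, for the Hausdorff-closeness estimates in the proof of Theorem \ref{theorem: B_3 theorem}). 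Third, the enumeration of $J$, the definitions of $\Jfree$, $\Jfixedone$, $\Jfixedzero$, and the four defining properties of $A_q$ stated above coincide verbatim with those in \cite{baker2023cardinality}, so $A_q$ is exactly the set that \cite[Proposition 3.10]{baker2023cardinality} concerns. Applying that proposition then yields $\pi_q(A_q) \subset \mathcal{U}_q \cap (\mathcal{U}_q + 1)$.

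The part that needs genuine care — and the only place an error could hide — is the bookkeeping in this translation: one must confirm there is no off-by-one discrepancy in the indexing of the zeros (that $j_0$ is the first zero, that $\Jfixedone$ corresponds to $n \equiv 1 \pmod 4$, $\Jfixedzero$ to $n \equiv 3 \pmod 4$, and $\Jfree$ to $n$ even), and that the constraints (1)--(4) are imposed with the same orientation (forcing a $1$ where $c_j = 1$ or $j \in \Jfixedone$, and a $0$ where $c_j = -1$ or $j \in \Jfixedzero$). A secondary subtlety is that \cite[Proposition 3.10]{baker2023cardinality} must be quoted in the form that permits \emph{any} expansion of $1$ with a $W_2^\mathbb{N}$ tail; if the cited statement is phrased for one distinguished expansion, I would additionally remark that the inclusion is insensitive to which such expansion is chosen, since the construction of $A_q$ depends on $\seqj{c}{1}{\infty}$ only through its zero set and the $\{-1,1\}$-positions. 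Granting these routine checks, the lemma follows immediately.
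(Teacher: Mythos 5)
Your proposal matches the paper's treatment: the paper likewise states the lemma as an immediate consequence of \cite[Proposition 3.10]{baker2023cardinality}, with the only substance being the (routine) identification of $A_q$, the partition $\Jfree$, $\Jfixedone$, $\Jfixedzero$, and the chosen expansion $\seqj{c}{1}{\infty}$ with the objects in that proposition. Your closing remark that the inclusion does not depend on which $W_2^\mathbb{N}$-tailed expansion of $1$ is chosen is a sensible precaution, and your emphasis on index/orientation bookkeeping is exactly the right thing to double-check, but neither goes beyond what the paper implicitly relies on.
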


Heuristically, this lemma is proved by showing that the strings $(01^9)$ and $(10^9)$ are forbidden in elements of $A_q$, so $A_q \subset S_9$.
Then, if $q \in (q_9,2)$, we know that $\pi_q(S_9) \subset \mathcal{U}_q$ and hence $\pi_q(A_q) \subset \mathcal{U}_q$.
For any $\seqj{a}{1}{\infty} \in A_q$, let $\seqj{b}{1}{\infty} = (a_j - c_j)_{j=1}^\infty$, then $\pi_q(\seqj{b}{1}{\infty}) = \pi_q(\seqj{a}{1}{\infty}) - 1 $.
With this, the containment of $\pi_q(A_q)$ in $(\mathcal{U}_q +1)$ is a result of $\seqj{b}{1}{\infty}$ also being contained in $S_9$.

As mentioned in the outline in Subsection \ref{subsection: sketch proof of B_3 theorem}, the authors previously proved the following lemma \cite[Proposition 3.13]{baker2023cardinality}.

\begin{lemma}
    \label{lemma: thickness of pi_q(A_q)}
    If $q > q_9$ then $\tau(\pi_q(A_q)) > q^{-5}$.
\end{lemma}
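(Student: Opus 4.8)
The plan is to make the Cantor--set structure of $\pi_q(A_q)$ completely explicit, read off its gaps and bridges, and thereby reduce $\tau(\pi_q(A_q))$ to a single quantity controlled by the spacing of the free zeros of the fixed expansion $\seqj{c}{1}{\infty}$.

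First I would note that by Lemma~\ref{lemma: projection of A_q contained in U_q and U_q + 1} we have $\pi_q(A_q)\subset\mathcal{U}_q$, so $\pi_q$ is injective and strictly order preserving on $A_q$; hence the only places where $\pi_q(A_q)$ can branch are the free zeros. Fixing the digits at the free zeros $j_0,j_2,\dots,j_{2(i-1)}$ (and therefore, by the defining constraints on $A_q$, at every index up to $j_{2i}-1$) determines a ``level-$i$ cylinder'' $C$, which is cut by a single gap $G_C$, coming from the choice $a_{j_{2i}}\in\{0,1\}$, into two ``half cylinders''. Using $\pi_q(w\eta)=\pi_q(w0^\infty)+q^{-|w|}\pi_q(\eta)$ and the description of $A_q$, a direct computation gives $|G_C|=q^{-j_{2i}}(1-s_i)$ and shows each half cylinder has diameter $q^{-j_{2i}}s_i$, where $s_i=\sum_{l\ge 1}q^{-(j_{2i+2l}-j_{2i})}$ depends only on $i$, not on the choices defining $C$.

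Next I would verify the ordering of gap sizes needed for the thickness calculation. Since consecutive free zeros satisfy $j_{2i+2}\ge j_{2i}+2$ and $0\le s_i\le\frac{1}{q^2-1}$, the elementary inequality $q^4-3q^2+1>0$ (valid for $q>G$, hence for $q>q_9$) shows that every level-$i$ gap is strictly larger than all gaps at deeper levels and strictly smaller than all gaps at shallower levels, while all level-$i$ gaps share the same diameter. Therefore, in the stepwise construction of $\pi_q(A_q)$ by removing gaps in decreasing order, at the moment $G_C$ is removed the remaining set is exactly the disjoint union of the level-$i$ cylinders, so $G_C$ is removed from $C$ itself and its two bridges are precisely the two half cylinders. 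Consequently
$$\tau(\pi_q(A_q))=\inf_{i}\frac{s_i}{1-s_i}.$$

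It remains to bound $s_i$ below, which is the combinatorial core. Because $\seqj{c}{1}{\infty}\in 1^k0^{k+4}W_2^\mathbb{N}$ and no block of $W_2$ has two consecutive nonzero entries, any two consecutive zeros of $\seqj{c}{1}{\infty}$ are at most $3$ apart, and a gap of size $3$ between consecutive zeros pins down the blocks on either side so that the adjacent zero-gaps have size at most $2$; the transition from $1^k0^{k+4}$ into $W_2^\mathbb{N}$ only produces a zero-gap of size at most $2$. Since exactly one zero $j_{2i+1}$ lies strictly between $j_{2i}$ and $j_{2i+2}$, this gives $j_{2i+2}-j_{2i}\le 5$, hence $s_i\ge q^{-5}$ for every $i$ and $\tau(\pi_q(A_q))\ge\frac{q^{-5}}{1-q^{-5}}=\frac{1}{q^5-1}>q^{-5}$. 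I expect this last block-counting step --- carefully ruling out two consecutive ``size-$3$'' zero-gaps across all $W_2$ patterns and at the head-to-tail transition --- to be the main obstacle; once the cylinder picture is in place, the rest is bookkeeping.
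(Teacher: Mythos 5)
The paper does not prove this lemma in situ; it simply cites it as Proposition 3.13 of the authors' earlier work \cite{baker2023cardinality}, so there is no ``paper's proof'' here to compare against line by line. What you give is a self-contained argument, and as far as I can tell it is correct and complete. Your reduction of the thickness to $\inf_i \frac{s_i}{1-s_i}$ via the cylinder picture is sound: since $\pi_q(A_q)\subset\mathcal{U}_q$ the coding is injective and order-preserving, so the Cantor structure of $\pi_q(A_q)$ is exactly governed by the free zeros; fixing the choices at $j_0,\dots,j_{2(i-1)}$ pins down the prefix up to index $j_{2i}-1$, and the choice at $j_{2i}$ creates a gap of length $q^{-j_{2i}}(1-s_i)$ flanked by two bridges of equal length $q^{-j_{2i}}s_i$, where $s_i=\sum_{l\ge 1}q^{-(j_{2i+2l}-j_{2i})}$. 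The strict monotonicity of gap lengths across levels, which you need to identify those bridges in Newhouse's gap-removal construction, follows from $j_{2i+2}-j_{2i}\ge 2$, $s_i\le 1/(q^2-1)$, and $q^4-3q^2+1>0$ for $q>G$, all of which check out.

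The block-counting step, which you flagged as the likely obstacle, also holds up. Each block of $W_2$ contains at most one nonzero digit, so consecutive zeros of the fixed expansion $\seqj{c}{1}{\infty}$ are at most $3$ apart, and a gap of exactly $3$ forces the left zero to sit at the first position of a block $(0\,{*})$ and the right zero at the second position of a block $({*}\,0)$; a second consecutive gap of $3$ on either side would require a $W_2$-block with two nonzero digits, which does not exist. The transition from $0^{k+4}$ into $W_2^\mathbb{N}$ produces a gap of at most $2$, since the first $W_2$-block always has a zero in position $1$ or $2$. Hence $j_{2i+2}-j_{2i}\le 5$ for every $i$, giving $s_i\ge q^{-5}$ and $\tau(\pi_q(A_q))\ge \frac{q^{-5}}{1-q^{-5}}=\frac{1}{q^5-1}>q^{-5}$, which is the claimed strict bound. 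In short: the paper defers to an external citation, while your argument is an explicit, correct derivation of the same bound from the gap/bridge structure.
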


\subsection{Hausdorff distances and interleaving}

Using Lemma \ref{lemma: projection of A_q contained in U_q and U_q + 1}, and Lemma \ref{lemma: containment in U_q}\ref{lemma item: U_q contains S_k}, the following lemma is an immediate corollary to Proposition \ref{proposition: intersection gives nonempty B}.

\begin{lemma}
    \label{lemma: corollary to nonempty intersection}
    If $k \geq 10$, $q > q_{k-1}$ and the intersection
    $$(\pi_q(S_{k-1}) + 1) \cap g_{q,k}(\pi_q(A_q)),$$
    is nonempty then $q \in \mathcal{B}_3$.
\end{lemma}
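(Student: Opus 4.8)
The plan is to read this statement off from Proposition~\ref{proposition: intersection gives nonempty B} applied with $m=1$. That instance says: if $x$ satisfies $f_0(x)\in N_k^1(q)$, then $x\in\mathcal{U}_q^{(3)}\cap J_q$, and hence $q\in\mathcal{B}_3$; here $N_k^1(q)=(\mathcal{U}_q+1)\cap g_{q,k}(\mathcal{U}_q+1)\cap g_{q,k}(\mathcal{U}_q)$, which, since $g_{q,k}$ is an affine bijection, also equals $(\mathcal{U}_q+1)\cap g_{q,k}\big((\mathcal{U}_q+1)\cap\mathcal{U}_q\big)$. So it is enough to show that the intersection appearing in the statement is contained in $N_k^1(q)$, together with checking that the standing hypotheses on $q$ required by the cited results all hold.

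First I would check the hypotheses on $q$. Since $k\ge 10$ we have $k-1\ge 9$, and as the $k$-Bonacci numbers increase with $k$ towards $2$ with $q_2=G$, the assumption $q>q_{k-1}$ gives $q>q_{k-1}\ge q_9>G$. Thus $q\in(G,2)$, so Proposition~\ref{proposition: intersection gives nonempty B} is applicable; $q\in(q_9,2)$, so Lemma~\ref{lemma: projection of A_q contained in U_q and U_q + 1} applies and gives $\pi_q(A_q)\subseteq\mathcal{U}_q\cap(\mathcal{U}_q+1)$; and $q>q_{k-1}$ with $k-1\ge 2$, so Lemma~\ref{lemma: containment in U_q}\ref{lemma item: U_q contains S_k} applies and gives $\pi_q(S_{k-1})\subseteq\mathcal{U}_q$.

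Next I would assemble the inclusion. From $\pi_q(S_{k-1})\subseteq\mathcal{U}_q$ we get $\pi_q(S_{k-1})+1\subseteq\mathcal{U}_q+1$, and from $\pi_q(A_q)\subseteq\mathcal{U}_q\cap(\mathcal{U}_q+1)$, since taking images under $g_{q,k}$ preserves inclusions, $g_{q,k}(\pi_q(A_q))\subseteq g_{q,k}(\mathcal{U}_q)\cap g_{q,k}(\mathcal{U}_q+1)$. Intersecting these two facts yields $(\pi_q(S_{k-1})+1)\cap g_{q,k}(\pi_q(A_q))\subseteq(\mathcal{U}_q+1)\cap g_{q,k}(\mathcal{U}_q+1)\cap g_{q,k}(\mathcal{U}_q)=N_k^1(q)$, so nonemptiness of the left-hand side forces $N_k^1(q)\ne\emptyset$. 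To finish, I would note that $N_k^1(q)\subseteq g_{q,k}(\mathcal{U}_q)\subseteq g_{q,k}(I_q)\subseteq I_q$ — the last inclusion being the symbolic content of \eqref{equation: g_qk prefixes by a sequence}, namely that $g_{q,k}$ prepends the block $1^{k-1}0$ to a base $q$ expansion — so any $y\in N_k^1(q)$ has a preimage $x=y/q$ with $f_0(x)=y\in N_k^1(q)$, and Proposition~\ref{proposition: intersection gives nonempty B} then delivers $x\in\mathcal{U}_q^{(3)}\cap J_q$, whence $q\in\mathcal{B}_3$. There is no substantial obstacle here: the lemma is a genuinely formal corollary, and the only thing that requires a little care is the bookkeeping of which inequality on $q$ each cited result presupposes and verifying that all of them follow from the assumptions $k\ge 10$ and $q>q_{k-1}$; the remainder is pure set manipulation.
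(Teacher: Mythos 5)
Your proof is correct and matches the paper's intent exactly: the paper declares this an immediate corollary of Proposition~\ref{proposition: intersection gives nonempty B}, Lemma~\ref{lemma: projection of A_q contained in U_q and U_q + 1}, and Lemma~\ref{lemma: containment in U_q}\ref{lemma item: U_q contains S_k}, and you have spelled out precisely those ingredients and the bookkeeping of which hypothesis on $q$ each one needs. The extra care you take in verifying $N_k^1(q)\subseteq I_q$ (so that an $x$ with $f_0(x)\in N_k^1(q)$ actually exists) is a detail the paper elides but is exactly what is meant.
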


We prove the following result on $\epsilon$-strong interleaving with the intention of applying Lemma \ref{lemma: strong interleaving}.

\begin{lemma}
    \label{lemma: sets are strongly interleaved}
    If $k \geq 9$
    then the sets $(\pi_{q_k}(S_{k-1}) + 1)$ and $g_{q_k,k}(\pi_{q_k}(A_{q_k}))$ are $q_k^{-2k-4}$-strongly interleaved.
\end{lemma}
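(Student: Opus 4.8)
The plan is to apply Lemma \ref{lemma: strong interleaving}: it suffices to exhibit four points, two in each set, say $a_1, a_2 \in (\pi_{q_k}(S_{k-1}) + 1)$ and $b_1, b_2 \in g_{q_k,k}(\pi_{q_k}(A_{q_k}))$, satisfying $a_1 \leq b_1 \leq a_2 \leq b_2$ together with the separation bound $\min\{(b_1 - a_1), (a_2 - b_1), (b_2 - a_2)\} \geq 2 q_k^{-2k-4}$. Since both sets live inside a tiny window (they are affine images under contractions by $q_k^{-(k-1)}$ or $q_k^{-k}$ of sets of bounded diameter), the points we pick must themselves be chosen quite carefully so that the three gaps between consecutive points are each at least $2q_k^{-2k-4}$; the natural candidates are projections of explicit eventually-periodic sequences whose $\pi_{q_k}$-values we can compute via geometric-series formulas.

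First I would pin down the two sets symbolically. By \eqref{equation: g_qk prefixes by a sequence}, $g_{q_k,k}(\pi_{q_k}(\seqj{\delta}{1}{\infty})) = \pi_{q_k}((1^{k-1}0)\seqj{\delta}{1}{\infty})$, so $g_{q_k,k}(\pi_{q_k}(A_{q_k}))$ consists of projections of sequences of the form $(1^{k-1}0)\seqj{a}{1}{\infty}$ with $\seqj{a}{1}{\infty} \in A_{q_k}$; recall $A_{q_k}$ is built from the fixed expansion $\seqj{c}{1}{\infty} \in 1^k 0^{k+4} W_2^{\mathbb{N}}$ of $1$ in base $q_k$ (which, since $q = q_k$, we may take to be $1^k 0^\infty$, so $J = \{k+1, k+2, \ldots\}$, $\Jfree$, $\Jfixedone$, $\Jfixedzero$ are all explicit arithmetic progressions). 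Meanwhile $\pi_{q_k}(S_{k-1}) + 1$ consists of projections of $\seqj{\epsilon}{1}{\infty}$ avoiding $(01^{k-1})$ and $(10^{k-1})$, shifted up by $1 = \pi_{q_k}(1^k 0^\infty)$. The overlap region is near $\pi_{q_k}((1^{k-1}0)^\infty)$, which is the fixed point of $g_{q_k,k}$ and equals $1$ when $q = q_k$; this is why the two sets are positioned to interleave at all.

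Next I would choose the four points explicitly. For $g_{q_k,k}(\pi_{q_k}(A_{q_k}))$, the minimum and maximum of $A_{q_k}$ are obtained by setting every free zero to $0$ or to $1$ respectively; prefixing by $(1^{k-1}0)$ and projecting gives $b_1$ (small) and $b_2$ (large), and $b_2 - b_1 = q_k^{-k}(\pi_{q_k}(A_{q_k})$-diameter$)$, which one computes to be on the order of $q_k^{-k}$ times a constant — comfortably larger than $4 q_k^{-2k-4}$ for $k \geq 9$. For $\pi_{q_k}(S_{k-1}) + 1$, I would pick $a_1$ and $a_2$ to straddle $b_1$: e.g. take $a_1$ to be the projection (plus $1$) of a sequence in $S_{k-1}$ that begins $(1^{k-1}0)^N 0 \ldots$ for suitable $N$ so that $a_1 < b_1$, and $a_2$ the projection of one beginning $(1^{k-1}0)^N 1 0^{k-2} \ldots$ so that $b_1 < a_2 < b_2$. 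The key computation is then to estimate, using the geometric-series identities for $\pi_{q_k}$ of eventually-periodic sequences (and the identity $\pi_{q_k}((1^{k-1}0)^\infty) = 1$), each of the three differences $b_1 - a_1$, $a_2 - b_1$, $b_2 - a_2$ and check each exceeds $2 q_k^{-2k-4}$; the dominant term in each will be of order $q_k^{-ck}$ for some $c < 2$, and a direct numerical check at $k = 9$ plus monotonicity in $k$ finishes it, exactly as in the proof of Lemma \ref{lemma: expansion of 1 has prefix 1^k0^k+4}.

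The main obstacle is the bookkeeping in the middle: correctly identifying, for $q = q_k$, the first few free zeros of $\seqj{c}{1}{\infty} = 1^k 0^\infty$ and hence the exact "earliest branching position" of $A_{q_k}$, so that the chosen points $b_1, b_2$ and the interleaving witnesses $a_1, a_2$ genuinely sit in the claimed order and with the claimed separation — this requires carefully tracking where $\Jfixedone$, $\Jfixedzero$ force digits versus where $\Jfree$ leaves them free, and then comparing lexicographic order of the resulting prefixed sequences against sequences in $S_{k-1}$. Once the four points are correctly located, the remaining inequalities are routine geometric-series estimates of the type already carried out repeatedly in the paper, closed off by a finite numerical check at $k = 9$.
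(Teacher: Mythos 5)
Your plan — exhibit four explicit points and invoke Lemma \ref{lemma: strong interleaving} — is the correct framework, and your symbolic reading of both sets (using \eqref{equation: g_qk prefixes by a sequence} and the explicit structure of $A_{q_k}$ when the fixed expansion is $1^k 0^\infty$) matches the paper's setup. But there is a genuine gap in how you produce the four points, and it is exactly where the paper's one clever move lives.

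You propose to take $b_1, b_2$ as (roughly) the min and max of $g_{q_k,k}(\pi_{q_k}(A_{q_k}))$ and then to find $a_1, a_2 \in \pi_{q_k}(S_{k-1}) + 1$ straddling $b_1$. The existence of such $a_1, a_2$ in the right positions is not obvious: a priori $g_{q_k,k}(\pi_{q_k}(A_{q_k}))$ could sit inside a single gap of $\pi_{q_k}(S_{k-1}) + 1$, in which case no straddling points exist. You assert you can pick sequences in $S_{k-1}$ beginning $(1^{k-1}0)^N 0 \ldots$ or $(1^{k-1}0)^N 1 0^{k-2}\ldots$, but these prefixes are not admissible: for $N \geq 2$ the block junction $\ldots 0\, 1^{k-1}\, 0 \ldots$ contains the forbidden word $01^{k-1}$, and even for $N = 1$ a tail beginning $0\ldots$ quickly forces $10^{k-1}$. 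So the proposed candidate sequences do not lie in $S_{k-1}$, and the justification for the ordering $a_1 \leq b_1 \leq a_2 \leq b_2$ is missing.

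The paper sidesteps both problems with a single identity you do not use: since $q = q_k$, one may replace $01^k$ by $10^k$ anywhere in a base-$q_k$ expansion (this is the defining relation $q_k^{-1} = q_k^{-2} + \cdots + q_k^{-(k+1)}$). Choosing $a^{(i)} = \pi_{q_k}(1^k w^{(i)})$ for four explicit periodic tails $w^{(i)} \in W_2^{\mathbb{N}}$-type words avoiding $0^4$ and $1^4$, the paper computes
\[
g_{q_k,k}(a^{(i)}) - 1 = \pi_{q_k}(1^{k-1}0\, 1^k w^{(i)}) - \pi_{q_k}(1^k 0^\infty) = \pi_{q_k}(1^k 0^k w^{(i)}) - \pi_{q_k}(1^k 0^\infty) = \pi_{q_k}(0^{2k} w^{(i)}),
\]
and the final expression manifestly lies in $\pi_{q_k}(S_{k-1})$ because $0^{2k} w^{(i)}$ avoids $01^{k-1}$ and $10^{k-1}$. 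Hence all four $g_{q_k,k}(a^{(i)})$ lie in \emph{both} sets, so assigning them alternately to $a_1, b_1, a_2, b_2$ gives the ordering for free, and the three gaps are computed directly from the geometric-series formulas. That identity is the missing idea in your proposal; without it you cannot locate points of $\pi_{q_k}(S_{k-1}) + 1$ inside the small window occupied by $g_{q_k,k}(\pi_{q_k}(A_{q_k}))$, and the rest of your argument does not get off the ground.
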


\begin{proof}
    Let $k \geq 9$.
    By the definition of $q_k$, $1 = \pi_{q_k}(1^k 0^\infty)$ and we label $(c_{k,j})_{j=1}^\infty = 1^k 0^\infty$.
    By the definition of $A_{q_k}$ when associated with the fixed expansion of $1$ given by $(c_{k,j})_{j=1}^\infty$, we know that for any $m \in \mathbb{N}_{\geq 0}$, the indices $k+(2m+1)$ are free $0$s, the indices $k + (4m+2)$ are fixed $1$s, and the indices $k + (4m+4)$ are fixed $0$s.
    Using this we construct four points in $\pi_{q_k}(A_{q_k})$, with the intention of applying Lemma \ref{lemma: strong interleaving}.

    Consider the points $a^{(i)} = \pi_q(\seqj{a^i}{1}{\infty})$ where $i \in \{1, 2, 3,4\}$ and $a^{(1)} < a^{(2)} < a^{(3)} < a^{(4)}$ are given by
    \begin{align*}
        a^{(1)} &= \pi_{q_k}(1^k (0100)^\infty) = q_k^{-k}\frac{1}{q_k^4-1}(q_k^2) , \\
        a^{(2)} &= \pi_{q_k}(1^k (0110)^\infty) = q_k^{-k}\frac{1}{q_k^4-1}(q_k+q_k^2), \\
        a^{(3)} &= \pi_{q_k}(1^k (1100)^\infty) = q_k^{-k}\frac{1}{q_k^4-1}(q_k^2+q_k^3), \\
        a^{(4)} &= \pi_{q_k}(1^k (1110)^\infty) = q_k^{-k}\frac{1}{q_k^4-1}(q_k + q_k^2 +q_k^3).
    \end{align*}    
    We check that $g_{q_k,k}(a^{(i)}) \in (\pi_{q_k}(S_{k-1}) + 1) \cap g_{q_k,k}(\pi_{q_k}(A_{q_k}))$ for all $i \in \{1,2,3,4\}$.
    Since $a^{(i)} \in \pi_{q_k}(A_{q_k})$ by construction, it is obvious that $g_{q_k,k}(a^{(i)}) \in g_{q_k,k}(\pi_{q_k}(A_{q_k}))$ for all $i \in \{1,2,3,4\}$.
    For the other containment, we show that $g_{q_k,k}(a^{(i)}) - 1 \in \pi_{q_k}(S_{k-1}) $.
    Recall that $1 = \pi_{q_k}(1^k 0^\infty)$, and since $q = q_k$, we can replace\footnote{This is a consequence of the definition of $q_k$, which is equivalent to $q_k^{-1} = q_k^{-2} + \cdots + q_k^{-(k+1)}$.} $01^k$ with $10^k$ at any point in the expansion.
    Recall also that \eqref{equation: g_qk prefixes by a sequence} tells us that $g_{q,k}$ acts to prefix base $q$ expansions by $(1^{k-1}0)$.
    This allows us to write
    \begin{align*}
        g_{q_k,k}(a^{(i)}) - 1 &= \pi_{q_k}(1^{k-1} 0 1^k \seqj{a^{(i)}}{k+1}{\infty}) - \pi_{q_k}(1^k 0^\infty), \\
                     &= \pi_{q_k}(1^k 0^k \seqj{a^{(i)}}{k+1}{\infty}) - \pi_{q_k}(1^k 0^\infty), \\
                     &= \pi_{q_k}(0^{2k} \seqj{a^{(i)}}{k+1}{\infty}).
    \end{align*}
    For all $i \in \{1,2,3,4\}$, the tail $\seqj{a^{(i)}}{k+1}{\infty}$ avoids $0^4$ and $1^4$ because they are sequences in $A_q$.
    This allows us to declare that $0^{2k}\seqj{a^{(i)}}{k+1}{\infty}$ is contained in $S_{k-1}$.
    Therefore, $g_{q_k,k}(a^{(i)}) \subset (\pi_{q_k}(S_{k-1}) + 1)$ for all $i \in \{1,2,3,4\}$.
    The four points $g_{q_k,k}(a^{(i)})$ where $i \in \{1,2,3,4\}$ are given by
    \begin{align*}
        g_{q_k,k}(a^{(1)}) &= \pi_{q_k}(1^{k-1} 0 1^k (0100)^\infty), \\
        g_{q_k,k}(a^{(2)}) &= \pi_{q_k}(1^{k-1} 0 1^k (0110)^\infty), \\
        g_{q_k,k}(a^{(3)}) &= \pi_{q_k}(1^{k-1} 0 1^k (1100)^\infty), \\
        g_{q_k,k}(a^{(4)}) &= \pi_{q_k}(1^{k-1} 0 1^k (1110)^\infty),
    \end{align*}
    and we seek the minimum distance between them.
    Since $a^{(1)} < a^{(2)} < a^{(3)} < a^{(4)}$ and $g_{q,k}$ is increasing, we need only consider the distances $(g_{q,k}(a^{(i+1)}) - g_{q,k}(a^{(i)}))$ for $i \in \{2,3,4\}$.
Observe that since $q_k^3 - q_k = q_k(q_k^2-1) > q_k$, the smallest of 
     \begin{align*}
        (a^{(2)} - a^{(1)}) &= q_k^{-k}\frac{1}{q_k^4-1}q_k, \\
        (a^{(3)} - a^{(2)}) &= q_k^{-k}\frac{1}{q_k^4-1}(q_k^3 - q_k), \\
        (a^{(4)} - a^{(3)}) &= q_k^{-k}\frac{1}{q_k^4-1}q_k,
    \end{align*}
is given by $\frac{q_k^{-k+1}}{q_k^4 - 1}$.
The smallest over $i \in \{1,2,3,4\}$ of $g_{q_k,k}(a^{(i+1)}) - g_{q_k,k}(a^{(i)}) = q_k^{-k}(a^{(i+1)} - a^{(i)})$ is then given by $q_k^{-k}(\frac{q_k^{-k+1}}{q_k^4-1})$.
Therefore, by Lemma \ref{lemma: strong interleaving}, $(\pi_{q_k}(S_{k-1}) + 1)$ and $g_{q_k,k}(\pi_{q_k}(A_{q_k}))$ are $\epsilon$-strongly interleaved where $2 \epsilon = \frac{q_k^{-2k+1}}{q_k^4-1}$.
So 
$$\epsilon = \frac{q_k^{-2k+1}}{2(q_k^4-1)} = q_k^{-2k-3} \frac{1}{2(1-q_k^{-4})}.$$

Since $q_k \geq q_9 \geq 1.998 > 2(1-2^{-4}) = 1.875 > 2(1-q_k^{-4})$, we know that $\frac{1}{2(1-q_k^{-4})} > \frac{1}{q_k}$, so $\epsilon > q_k^{-2k-4}$.
Hence we have shown that $(\pi_{q_k}(S_{k-1}) + 1)$ and $g_{q_k,k}(\pi_{q_k}(A_{q_k}))$ are $q_k^{-2k-4}$-strongly interleaved.
\end{proof}
We use the following lemma as a tool in the proofs of Lemmas \ref{lemma: bounding Hausdorff distance pi_q(S_k-1)} and \ref{lemma: hausdorff distance between g_q pi_q A_q}.

\begin{lemma}
    \label{lemma: different projections of same sequence bound}
    Let $q_1, q_2 \in (1,2)$ and $\seqj{\epsilon}{1}{\infty} \in \{-1,0,1\}^\mathbb{N}$, then
    $$|\pi_{q_1}(\seqj{\epsilon}{1}{\infty}) - \pi_{q_2}(\seqj{\epsilon}{1}{\infty})|
    \leq \frac{|q_1 - q_2|}{(q_1-1)(q_2-1)}.$$
\end{lemma}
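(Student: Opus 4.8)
The plan is to write out both projections as the series defining $\pi_q$ and estimate term by term. Since $\pi_{q}(\seqj{\epsilon}{1}{\infty}) = \sum_{i=1}^\infty \epsilon_i q^{-i}$, subtracting the two series gives
$$\pi_{q_1}(\seqj{\epsilon}{1}{\infty}) - \pi_{q_2}(\seqj{\epsilon}{1}{\infty}) = \sum_{i=1}^\infty \epsilon_i\left(q_1^{-i} - q_2^{-i}\right),$$
so by the triangle inequality together with the bound $|\epsilon_i| \le 1$,
$$\left|\pi_{q_1}(\seqj{\epsilon}{1}{\infty}) - \pi_{q_2}(\seqj{\epsilon}{1}{\infty})\right| \le \sum_{i=1}^\infty \left|q_1^{-i} - q_2^{-i}\right|.$$

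The key observation is that all of the terms $q_1^{-i} - q_2^{-i}$ carry the same sign: assuming without loss of generality that $q_1 \le q_2$ (the case $q_1 = q_2$ being trivial, as both sides vanish), the map $q \mapsto q^{-i}$ is nonincreasing on $(1,2)$, so $q_1^{-i} - q_2^{-i} \ge 0$ for every $i$. Hence the sum of the absolute values equals the absolute value of the sum, and the latter telescopes into a difference of two geometric series:
$$\sum_{i=1}^\infty \left|q_1^{-i} - q_2^{-i}\right| = \left|\sum_{i=1}^\infty q_1^{-i} - \sum_{i=1}^\infty q_2^{-i}\right| = \left|\frac{1}{q_1-1} - \frac{1}{q_2-1}\right| = \frac{|q_1 - q_2|}{(q_1-1)(q_2-1)},$$
where we used $\sum_{i=1}^\infty q^{-i} = \tfrac{1}{q-1}$ for $q > 1$. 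Combining this with the previous display yields the stated bound.

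There is essentially no real obstacle in this argument; the only point that deserves a moment's care is the sign-constancy step that licenses pulling the absolute value outside the infinite sum, which is precisely why it is worth reducing to the case $q_1 \le q_2$ before collapsing the series. Everything else is the triangle inequality and the closed form for a geometric series.
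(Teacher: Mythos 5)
Your argument is correct and follows essentially the same route as the paper: expand both projections as series, bound the coefficients $|\epsilon_i|\le 1$, and use the fact that all $q_1^{-i}-q_2^{-i}$ share a common sign to collapse to a difference of geometric series. Your write-up is a little more explicit than the paper's one-line chain (which silently uses the sign-constancy inside $\bigl|\sum\epsilon_j(q_1^{-j}-q_2^{-j})\bigr|\le\bigl|\sum(q_1^{-j}-q_2^{-j})\bigr|$), but the proofs are the same in substance.
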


\begin{proof}
    By the definition of the projection maps,
    $$|\pi_{q_1}(\seqj{\epsilon}{1}{\infty}) - \pi_{q_2}(\seqj{\epsilon}{1}{\infty})| = \left|\sum_{j=1}^\infty\epsilon_j(q_1^{-j} - q_2^{-j})\right| \leq \left|\sum_{j=1}^\infty(q_1^{-j} - q_2^{-j})\right| =\left| \frac{1}{q_1-1} - \frac{1}{q_2-1}\right| = \frac{|q_1 - q_2|}{(q_1-1)(q_2-1)}.$$
\end{proof}

The following two lemmas provide us with upper bounds on the Hausdorff distances between the sets in question when $q$ is in the required range.

\begin{lemma}
    \label{lemma: bounding Hausdorff distance pi_q(S_k-1)}
    If $k \geq 9$ and $|q - q_k| < q_k^{-2k-6}$ then $d_{\mathrm{H}}(\pi_q(S_{k-1}) , \pi_{q_k}(S_{k-1})) < q_k^{-2k-4}$.
\end{lemma}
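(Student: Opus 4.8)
The plan is to bound the Hausdorff distance between $\pi_q(S_{k-1})$ and $\pi_{q_k}(S_{k-1})$ by exploiting that these two Cantor sets are projections of the \emph{same} symbolic space $S_{k-1}$ under two nearby projection maps. The key observation is that every point of $\pi_q(S_{k-1})$ is of the form $\pi_q(\seqj{\epsilon}{1}{\infty})$ for some $\seqj{\epsilon}{1}{\infty} \in S_{k-1}$, and the ``obvious partner'' in $\pi_{q_k}(S_{k-1})$ is $\pi_{q_k}(\seqj{\epsilon}{1}{\infty})$, the projection of the very same sequence. So the first step is simply to note that for any $x = \pi_q(\seqj{\epsilon}{1}{\infty}) \in \pi_q(S_{k-1})$, Lemma \ref{lemma: different projections of same sequence bound} gives
$$d(x, \pi_{q_k}(S_{k-1})) \leq |\pi_q(\seqj{\epsilon}{1}{\infty}) - \pi_{q_k}(\seqj{\epsilon}{1}{\infty})| \leq \frac{|q - q_k|}{(q-1)(q_k-1)},$$
and symmetrically for points of $\pi_{q_k}(S_{k-1})$. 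Taking suprema over both sets yields $d_{\mathrm{H}}(\pi_q(S_{k-1}) , \pi_{q_k}(S_{k-1})) \leq \frac{|q-q_k|}{(q-1)(q_k-1)}$.

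The second step is to turn this into the claimed bound $q_k^{-2k-4}$ using the hypothesis $|q - q_k| < q_k^{-2k-6}$. Since $k \geq 9$ we have $q_k > 1.998$ by direct computation (as used repeatedly elsewhere in the paper), so $q > q_{k-1} > 1.99$, which gives $(q-1)(q_k-1) > (0.99)(0.998) > 0.98 > q_k^{-1}$ (indeed $q_k^{-1} < 0.51$, so this is comfortable). Hence
$$d_{\mathrm{H}}(\pi_q(S_{k-1}) , \pi_{q_k}(S_{k-1})) \leq \frac{|q-q_k|}{(q-1)(q_k-1)} < \frac{q_k^{-2k-6}}{q_k^{-1}} = q_k^{-2k-5} < q_k^{-2k-4},$$
which is strictly stronger than required. (One should double-check that the hypothesis in the lemma statement is the strict inequality $|q - q_k| < q_k^{-2k-6}$ rather than the $\leq$ version appearing elsewhere; the argument is insensitive to this.)

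There is essentially no obstacle here — this is a routine estimate. The only point requiring a moment's care is the very first step: one must confirm that every $\seqj{\epsilon}{1}{\infty} \in S_{k-1}$ projects into both sets under the respective maps, which is immediate from the definitions of $S_{k-1}$ and $\pi_q$, and that the supremum defining $d_{\mathrm{H}}$ is realized (or at least bounded) by this sequence-matching, which holds because $d(x, \pi_{q_k}(S_{k-1}))$ is an infimum over all points of $\pi_{q_k}(S_{k-1})$ and we are merely using one particular competitor. The mild subtlety worth flagging is simply the numerical verification that $(q-1)(q_k-1) > q_k^{-1}$ for $k \geq 9$, which follows from the lower bounds $q_k > 1.998$ and $q > q_{k-1} > q_8 > 1.99$ obtainable by direct computation; everything else is symbolic bookkeeping identical in spirit to the estimates in Lemmas \ref{lemma: expansion of 1 has certain prefix} and \ref{lemma: expansion of 1 has prefix 1^k0^k+4}.
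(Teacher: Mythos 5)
Your proof is correct and takes essentially the same route as the paper: match each point of one set with the projection of the same sequence under the other base, invoke Lemma \ref{lemma: different projections of same sequence bound} to bound the pointwise distance by $|q-q_k|/((q-1)(q_k-1))$, and finish with the numerical estimate on $(q-1)(q_k-1)$ for $k \geq 9$. The only cosmetic difference is that the paper bounds $1/((q-1)(q_k-1)) < 5/4$ whereas you bound $(q-1)(q_k-1) > q_k^{-1}$; both yield the required $q_k^{-2k-4}$, and your estimate is marginally sharper.
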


\begin{proof}
    Let $k \geq 9$, $|q - q_k| < q_k^{-2k-6}$ and let $\seqj{s}{1}{\infty} \in S_{k-1}$ be arbitrary so $\pi_{q_k}(\seqj{s}{1}{\infty}) \in \pi_{q_k}(S_{k-1})$ is arbitrary.
    Then by Lemma \ref{lemma: different projections of same sequence bound}
    \begin{align*}
    |\pi_{q_k}(\seqj{s}{1}{\infty}) - \pi_q(\seqj{s}{1}{\infty})| 
    &\leq \frac{|q-q_k|}{(q_k-1)(q-1)}, \\
    & < \frac{5}{4}|q-q_k|, \\
    & < \frac{5}{4}q_k^{-2k-6}, \\
    & < q_k^{-2k-4}.
    \end{align*}
    Since $q_{9} \approx 1.998 $ the factor of $5/4$ appears as an easy lower bound for $\frac{1}{(q_k-1)(q-1)}$.
    So we have found a point $\pi_q(\seqj{s}{1}{\infty}) \in \pi_q(S_{k-1})$ such that $|\pi_{q_k}(\seqj{s}{1}{\infty}) - \pi_q(\seqj{s}{1}{\infty})| < q_k^{-2k-4}$.
    This argument also proves the converse, that is, for an arbitrary point $\pi_q(\seqj{s}{1}{\infty}) \in \pi_q(S_{k-1})$ the point $\pi_{q_k}(\seqj{s}{1}{\infty}) \in \pi_{q_k}(S_{k-1})$ satisfies $|\pi_{q_k}(\seqj{s}{1}{\infty}) - \pi_q(\seqj{s}{1}{\infty})| < q_k^{-2k-4}$.
    Therefore the desired upper bound on the Hausdorff distance holds.
\end{proof}

\begin{lemma}
    \label{lemma: hausdorff distance between g_q pi_q A_q}
    If $k \geq 9$ and $|q - q_k| < q_k^{-2k-6}$ then $d_{\mathrm{H}}(g_{q,k}(\pi_q(A_q)) , g_{q_k,k}(\pi_{q_k}(A_{q_k}))) < q_k^{-2k-4}$.
\end{lemma}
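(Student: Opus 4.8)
The plan is to prove both halves of the Hausdorff-distance bound by a single ``coordinate transplanting'' device: starting from a sequence that defines a point of one set, alter only the coordinates past index $2k+4$ so as to obtain a sequence defining a point of the other set, and then estimate the resulting displacement of the projections. The bookkeeping is as follows. As in the proof of Lemma~\ref{lemma: sets are strongly interleaved}, take the fixed expansion of $1$ in base $q_k$ to be $(c_{k,j})_{j=1}^\infty = 1^k 0^\infty$, which indeed lies in $1^k 0^{k+4} W_2^\mathbb{N}$ because $0^\infty = (00)^\infty$; and let $\seqj{c}{1}{\infty} \in 1^k 0^{k+4} W_2^\mathbb{N}$ be the fixed expansion of $1$ in base $q$ associated with $q$, which exists by Lemma~\ref{lemma: expansion of 1 has prefix 1^k0^k+4}. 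The crucial point is that these two sequences agree on their first $2k+4$ entries, both being equal to $1^k 0^{k+4}$.

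First I would check that the conditions defining $A_q$ and $A_{q_k}$ coincide on every index $1 \le j \le 2k+4$. For $1 \le j \le k$ both fixed expansions carry a $1$ in position $j$, so each of $A_q$, $A_{q_k}$ forces the value $1$. For $k+1 \le j \le 2k+4$, position $j$ is the $n$-th zero of both fixed expansions, where $n = j-k-1 \in \{0,\dots,k+3\}$ (both expansions begin $1^k 0^{k+4}$, so $j_n = k+1+n$ for both in this range), and whether this zero lies in $\Jfree$, $\Jfixedone$, or $\Jfixedzero$ depends only on $n$ modulo $4$, hence is the same for both. Consequently, given $\seqj{a}{1}{\infty} \in A_q$, the sequence $\seqj{a'}{1}{\infty}$ defined by $a'_j = a_j$ for $j \le 2k+4$, by $a'_j$ equal to the forced value at each fixed index $j > 2k+4$ of $(c_{k,j})$, and by $a'_j = 0$ at each free zero $j > 2k+4$, lies in $A_{q_k}$ and agrees with $\seqj{a}{1}{\infty}$ on the first $2k+4$ coordinates; the symmetric recipe transplants any element of $A_{q_k}$ to an element of $A_q$ agreeing with it on the first $2k+4$ coordinates. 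Since $g_{q,k}(\pi_q(\cdot))$ maps $A_q$ onto $g_{q,k}(\pi_q(A_q))$ (and likewise for $q_k$), it then suffices to bound the displacement for such a matched pair.

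So fix $\seqj{a}{1}{\infty} \in A_q$ and a matched $\seqj{a'}{1}{\infty} \in A_{q_k}$. By \eqref{equation: g_qk prefixes by a sequence}, $g_{q,k}(\pi_q(\seqj{a}{1}{\infty})) = \pi_q((1^{k-1}0)\seqj{a}{1}{\infty})$ and $g_{q_k,k}(\pi_{q_k}(\seqj{a'}{1}{\infty})) = \pi_{q_k}((1^{k-1}0)\seqj{a'}{1}{\infty})$, and because $\seqj{a}{1}{\infty}$ and $\seqj{a'}{1}{\infty}$ agree on their first $2k+4$ entries, the sequences $(1^{k-1}0)\seqj{a}{1}{\infty}$ and $(1^{k-1}0)\seqj{a'}{1}{\infty}$ agree on their first $3k+4$ entries; write them as $w\eta$ and $w\eta'$ with $|w| = 3k+4$. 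Then
\begin{align*}
\left|\pi_q((1^{k-1}0)\seqj{a}{1}{\infty}) - \pi_{q_k}((1^{k-1}0)\seqj{a'}{1}{\infty})\right|
&\le \left|\pi_q(w\eta) - \pi_{q_k}(w\eta)\right| + \left|\pi_{q_k}(w\eta) - \pi_{q_k}(w\eta')\right| \\
&\le \frac{|q-q_k|}{(q-1)(q_k-1)} + \frac{q_k^{-(3k+4)}}{q_k-1},
\end{align*}
where the first term is bounded by Lemma~\ref{lemma: different projections of same sequence bound} and the second uses that $w\eta$ and $w\eta'$ differ only from position $3k+5$ onward, together with $\pi_{q_k}(\eta), \pi_{q_k}(\eta') \in [0, \frac{1}{q_k-1}]$.

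It remains to close the numerics exactly as in Lemma~\ref{lemma: bounding Hausdorff distance pi_q(S_k-1)}. For $k \ge 9$ one has $q_k > 1.998$ and $|q-q_k| < q_k^{-2k-6}$, whence $\tfrac{|q-q_k|}{(q-1)(q_k-1)} < \tfrac{5}{4}q_k^{-2k-6}$ and $\tfrac{q_k^{-(3k+4)}}{q_k-1} < 2q_k^{-3k-4}$, so the displacement is at most $q_k^{-2k-4}\big(\tfrac{5}{4}q_k^{-2} + 2q_k^{-k}\big) < q_k^{-2k-4}$, since $\tfrac{5}{4}q_k^{-2} < 0.32$ and $2q_k^{-k} \le 2q_k^{-9} < 0.01$. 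Because the transplantation works in both directions, this gives $d_{\mathrm{H}}(g_{q,k}(\pi_q(A_q)), g_{q_k,k}(\pi_{q_k}(A_{q_k}))) < q_k^{-2k-4}$. The only genuinely delicate step is the first one: verifying that the defining constraints of $A_q$ and $A_{q_k}$ really coincide on the initial block of length $2k+4$. This is precisely where the shared prefix $1^k 0^{k+4}$ of the two fixed expansions of $1$ (guaranteed by Lemma~\ref{lemma: expansion of 1 has prefix 1^k0^k+4}) is used, together with the observation that the free/fixed status of a zero of a $W_2^\mathbb{N}$-tail depends only on its ordinal position among the zeros.
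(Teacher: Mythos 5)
Your proof is correct and follows essentially the same route as the paper: match each element of one $A$-set to an element of the other by altering only coordinates past index $2k+4$, then split the resulting displacement of projections via the triangle inequality into a base-change term (bounded by Lemma~\ref{lemma: different projections of same sequence bound}) and a tail-change term (bounded by $q_k^{-(3k+4)}$ times the diameter of $I_{q_k}$), and close with the same $q_k^{-2k-6}$-to-$q_k^{-2k-4}$ numerics. The only difference is cosmetic: you make explicit the transplanting recipe and the verification that the free/fixed classification of the first $2k+4$ indices coincides for the two fixed expansions of $1$, which the paper leaves implicit with the phrase ``the free and fixed indices on this initial segment coincide.''
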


\begin{proof}
    Let $k \geq 9$, $|q - q_k| < q_k^{-2k-6}$ and let $\seqj{c}{1}{\infty}$ be the fixed expansion of $1$ associated to $q$.
    Recall that $A_q$ is defined with respect to $\seqj{c}{1}{\infty}$.
    Using Lemma \ref{lemma: expansion of 1 has prefix 1^k0^k+4}, $\seqj{c}{1}{\infty}$ and $1^k 0^\infty$ agree on the first $2k+4$ indices, we know that the free and fixed indices on this initial segment coincide.
    Therefore, for every $\seqj{a}{1}{\infty} \in A_{q_k}$, we can find some $\seqj{a'}{1}{\infty} \in A_q$ such that $\seqj{a}{1}{2k+4} = \seqj{a'}{1}{2k+4}$, and vice versa.
    We show that these sequences satisfy the property that 
    \begin{equation}
        \label{equation: bounded Hausdorff distance g_qk set}
        | g_{q_k,k}(\pi_{q_k}(\seqj{a}{1}{\infty})) - g_{q,k}(\pi_q(\seqj{a'}{1}{\infty}))| < q_k^{-2k-4}.
    \end{equation}
    Notice that $\seqj{a}{1}{\infty} \in A_{q_k}$ was arbitrary, so $g_{q_k,k}(\pi_{q_k}(\seqj{a}{1}{\infty}))$ is an arbitrary element of $g_{q_k,k}(\pi_{q_k}(A_{q_k}))$.
    Therefore it suffices to prove that \eqref{equation: bounded Hausdorff distance g_qk set} holds since the converse direction, where we first choose an arbitrary point in $A_q$, proceeds in exactly the same way.

    Let $\seqj{a}{1}{\infty} \in A_{q_k}$ be arbitrary and pick $\seqj{a'}{1}{\infty} \in A_q$ such that $\seqj{a}{1}{2k+4} = \seqj{a'}{1}{2k+4}$.
    Then $g_{q_k,k}(\pi_{q_k}(\seqj{a}{1}{\infty})))$ is given by $\pi_{q_k}(1^{k-1} 0 \seqj{a}{1}{\infty})$ and $g_{q,k}(\pi_{q}(\seqj{a'}{1}{\infty})))$ is given by $\pi_{q}(1^{k-1} 0 \seqj{a'}{1}{\infty})$.
    Observe that by the triangle inequality,
    \begin{multline}
    | \pi_{q_k}(1^{k-1} 0 \seqj{a}{1}{\infty}) - \pi_q(1^{k-1} 0 \seqj{a'}{1}{\infty})| \leq
    |\pi_{q_k}(1^{k-1}0 \seqj{a}{1}{\infty}) - \pi_{q_k}(1^{k-1}0 \seqj{a'}{1}{\infty})| \\+ 
    |\pi_{q_k}(1^{k-1}0 \seqj{a'}{1}{\infty}) - \pi_q(1^{k-1}0 \seqj{a'}{1}{\infty})|.    
    \end{multline}
    Since $\seqj{a}{1}{\infty}$ and $\seqj{a'}{1}{\infty}$ agree on their first $(2k+4)$ entries, we know the sequences $(1^{k-1}0 \seqj{a}{1}{\infty})$ and $(1^{k-1}0 \seqj{a'}{1}{\infty})$ agree on their first $(3k+4)$ entries.
    Therefore, we can bound the first term as follows
    \begin{align*}
    |\pi_{q_k}(1^{k-1}0 \seqj{a}{1}{\infty}) - \pi_{q_k}(1^{k-1}0 \seqj{a'}{1}{\infty})| 
    &\leq q_k^{-3k-4}|\pi_{q_k}(\seqj{a}{2k+5}{\infty}) - \pi_{q_k}(\seqj{a'}{2k+5}{\infty})|,\\
    &\leq q_k^{-3k-4}\left(\frac{1}{q_k-1}\right),  
    \end{align*}
    where the $\frac{1}{q_k-1}$ term appears as an upper bound for the difference between the images of any two sequences under $\pi_{q_k}$, i.e. the diameter of $I_{q_k}$.
    By Lemma \ref{lemma: different projections of same sequence bound}, the second term satisfies
    $$|\pi_{q_k}(1^{k-1}0 \seqj{a'}{1}{\infty}) - \pi_q(1^{k-1}0 \seqj{a'}{1}{\infty})| \leq \frac{|q - q_k|}{(q-1)(q_k-1)}.$$
    Since both $\frac{1}{q_k-1}$ and $(q_k^{-k+2} + \frac{1}{q-1})$ are easily bounded above by $q_k$, we can see that 
    the left hand side of \eqref{equation: bounded Hausdorff distance g_qk set} can be bounded as follows:
    \begin{align*}
    | g_{q_k,k}(\pi_{q_k}(\seqj{a}{1}{\infty})) - g_{q,k}(\pi_q(\seqj{a'}{1}{\infty}))| 
    &\leq q_k^{-3k-4}\left(\frac{1}{q_k-1}\right) + \frac{q_k^{-2k-6}}{(q-1)(q_k-1)}, \\
    &\leq q_k^{-2k-6}\left(\frac{1}{q_k-1}\right)\left(q_k^{-k+2} + \frac{1}{q-1}\right),\\
    &\leq q_k^{-2k-4}.
    \end{align*}
    So we have proved \eqref{equation: bounded Hausdorff distance g_qk set} holds, which proves the lemma.
\end{proof}

\subsection{Proof of Theorem \ref{theorem: B_3 theorem}}

\begin{proof}[Proof of Theorem \ref{theorem: B_3 theorem}]
    Let $k \geq 9$ and $|q - q_k| < q_k^{-2k-6}$.
    By Lemma \ref{lemma: corollary to nonempty intersection}, it suffices to prove that \eqref{equation: B_3 nonempty U_q form} holds in cases \ref{theorem item: B_3 theorem at least 10} and \ref{theorem item: B_3 theorem 9}.
    Using the definition of $\epsilon$-strong interleaving alongside Lemmas \ref{lemma: sets are strongly interleaved}, \ref{lemma: bounding Hausdorff distance pi_q(S_k-1)} and \ref{lemma: hausdorff distance between g_q pi_q A_q}, we know that $(\pi_q(S_{k-1}) + 1)$ and $g_{q,k}(\pi_q(A_q))$ are interleaved.
    If we make the further restriction that $ 0 < q - q_9 < q_9^{-24}$ when $k = 9$, then by Lemmas \ref{lemma: thickness of S_k} and \ref{lemma: thickness of pi_q(A_q)}, we know that $\tau(\pi_q(S_{k-1}) + 1) \times \tau(g_{q,k}(\pi_q(A_q))) > q^{5} \times q^{-5} =1$.
    Hence we can apply Theorem \ref{theorem: Newhouse} to \eqref{equation: B_3 nonempty U_q form} in both cases \ref{theorem item: B_3 theorem at least 10} and \ref{theorem item: B_3 theorem 9} and the proof is complete.
\end{proof}

\section*{Acknowledgments}

The first author was financially supported by an EPSRC New Investigators Award (EP/W003880/1).

\bibliography{Bibliography.bib}
\bibliographystyle{plain}

\end{document}